\documentclass[a4paper,11pt,twoside]{article}
\usepackage[a4paper,hscale=0.7,vscale=0.75,centering]{geometry}
\usepackage{fullpage}

\RequirePackage[colorlinks,citecolor=blue,urlcolor=blue]{hyperref}
\usepackage[utf8]{inputenc}
\usepackage[T1]{fontenc}
\usepackage[english]{babel}
\usepackage{charter}
\usepackage{enumerate}
\frenchspacing
\usepackage{indentfirst}
\usepackage{xcolor}
\usepackage{authblk}
\usepackage{mathtools}

\usepackage{amsmath}
\usepackage{amsthm}	
\usepackage{amsfonts}
\usepackage{mathrsfs}	
\usepackage{amssymb}
\usepackage{dsfont}
\usepackage{bbm}

\theoremstyle{plain}
\newtheorem{theorem}{Theorem}[section]
\newtheorem{lemma}[theorem]{Lemma}

\newtheorem{corollary}[theorem]{Corollary}
\theoremstyle{definition}

\newtheorem{example}[theorem]{Example}
\newtheorem{assumption}[theorem]{Assumptions}
\newtheorem{assumptionsingle}[theorem]{Assumption}

\theoremstyle{definition}
\newtheorem{remark}[theorem]{Remark}

\numberwithin{equation}{section}
\numberwithin{figure}{section}
 \usepackage[nodayofweek]{datetime}

\usepackage[nodayofweek]{datetime}

\newcommand{\bE}{\mathbb{E}}

\newcommand{\bN}{\mathbb{N}}
\newcommand{\bP}{\mathbb{P}}

\newcommand{\bR}{\mathbb{R}}
\newcommand{\bV}{\mathbb{V}}

\def \E {\mathbb{E}}
\newcommand{\cA}{\mathcal{A}}

\newcommand{\cL}{\mathcal{L}}

\newcommand{\cP}{\mathcal{P}}

\title{Non-asymptotic bounds for sampling algorithms without log-concavity}
\author[1,2]{Mateusz B.\ Majka}
\author[1,2]{Aleksandar Mijatovi\'{c}}
\author[1,3]{\L ukasz Szpruch}

\affil[1]{The Alan Turing Institute, London}
\affil[2]{Department of Statistics, University of Warwick}
\affil[3]{School of Mathematics, University of Edinburgh}

\date{ }

\begin{document}
\selectlanguage{english}
\maketitle

\begin{abstract}
Discrete time analogues of ergodic stochastic differential equations (SDEs) are one of the most popular and flexible tools for sampling high-dimensional probability measures. Non-asymptotic analysis in the $L^2$ Wasserstein distance of sampling algorithms based on Euler discretisations of SDEs has been recently developed by several authors for log-concave probability distributions.
In this work we replace the log-concavity assumption with a log-concavity at infinity condition. We provide novel $L^2$ convergence rates for Euler schemes, expressed explicitly in terms of problem parameters. From there we derive non-asymptotic bounds on the distance between the laws induced by Euler schemes and the invariant laws of SDEs, both for schemes with standard and with randomised (inaccurate) drifts. We also obtain bounds for the hierarchy of discretisation, which enables us to deploy a multi-level Monte Carlo estimator. Our proof relies on a novel construction of a coupling for the Markov chains that can be used to control both the $L^1$ and $L^2$ Wasserstein distances simultaneously. Finally, we provide a weak convergence analysis that covers both the standard and the randomised (inaccurate) drift case. In particular, we reveal that the variance of the randomised drift does not influence the rate of weak convergence of the Euler scheme to the SDE.
\end{abstract}

\vspace{-5mm}

\section{Introduction}

Our primary aim is to study non-asymptotic properties of Markov chains that typically arise as approximations of ergodic solutions of stochastic differential equations on $\mathbb{R}^d$. The simplest example is a process $(X_k)_{k=0}^{\infty}$ defined as
\begin{equation} \label{eq euler}
\begin{cases}
X_{k+1} = X_k + b(X_k)h + \sqrt{h} \xi_{k+1} \,, \, \quad  k \geq 0 \,, \\
X_0 \sim \mu_0\,,
\end{cases}
\end{equation}
where $h > 0$ is the discretisation parameter and $(\xi_k)_{k=1}^{\infty}$ are i.i.d. random variables with the standard normal distribution $N(0,I)$. Here $\mu_0\in \cP_2(\bR^d)$, the space of square integrable probability measures on $\mathbb{R}^d$, and $b:\bR^d \rightarrow \bR^d$ is a drift function. We use the notation $\cL(X_k):=\operatorname{Law}(X_k)$ and our main goal is to quantify convergence of $\cL(X_k)$ using the $L^p$-Wasserstein distance with $p \in \{ 1 , 2 \}$, defined for probability measures $\mu$, $\nu \in \cP_p(\bR^d)$ as 
\begin{equation}
\label{eq p-wasserstein}
W_p(\mu,\nu) := \left( \inf_{\pi \in \Pi(\mu,\nu)}  \int_{\bR^d\times \bR^d} |x-y|^p \, \pi(dx \, dy) \right)^{1/p}	\,,
\end{equation}
where $\Pi(\mu,\nu)$ denotes the family of all couplings between $\mu$ and $\nu$, i.e., all measures on $\mathscr{B}(\bR^d\times \bR^d)$ 
such that $\pi(B \times \bR^d) = \mu(B)$ and $\pi(\bR^d \times B) = \nu(B)$ 
for every $B \in \mathscr B(\bR^d)$. Here $| \cdot | = \sqrt{\langle \cdot \, , \cdot \rangle}$ is the Euclidean distance on $\mathbb{R}^d$. We will also work with a special class of $L^1$-Wasserstein (pseudo) distances denoted by $W_f$ in which the Euclidean distance $|x-y|$ is replaced by $f(|x-y|)$ for some increasing function $f : [0,\infty) \to [0, \infty)$. Namely, we put $W_f(\mu, \nu) := \inf_{\pi \in \Pi(\mu,\nu)}  \int_{\bR^d\times \bR^d} f(|x-y|) \, \pi(dx \, dy)$. We remark that Wasserstein distances are typically preferred metrics when quantifying the quality of sampling methods, see \cite{arjovsky2017wasserstein,dalalyan2017user, gorham2016measuring}. 

Convergence in Wasserstein distances is typically investigated under the contractivity condition on the drift, i.e., under the assumption that there exists a constant $K>0$ such that
\begin{equation} \label{eq mono}
 \langle x-y, b(x) - b(y) \rangle \leq - K |x-y|^2 \qquad \text{ for all } x, y \in \mathbb{R}^d \,.
\end{equation}
If $b(x) = - \nabla U(x)$ for a function $U \in C^2(\mathbb{R}^d)$, this condition corresponds to strong convexity of $U$, whereas a probability measure $\mu$ such that $\mu(dx) \propto \exp (-U(x))dx$ is then called log-concave. Convergence analysis for several sampling algorithms under condition \eqref{eq mono} and the Lipschitz continuity of the drift has been recently performed in the $L^2$-Wasserstein distance in papers such as \cite{dalalyan2017user, Baker2017, durmus2016high, ChatterjiVarianceReduction2018}. 

In this work instead of \eqref{eq mono} we work with the following assumptions. 
\begin{assumption}[Contractivity at infinity] \label{as diss}
Function $b : \mathbb{R}^d \to \mathbb{R}^d$ satisfies the following conditions:
\begin{enumerate}[i)]
\item Lipschitz condition: there is a constant $L > 0$ such that
\begin{equation}\label{driftLipschitz}
|b(x) - b(y)| \leq L|x - y| \qquad \text{ for all } x, y \in \mathbb{R}^d \,.
\end{equation}
\item Contractivity at infinity condition: there exist constants $K$, $\mathcal{R} > 0$ such that 
\begin{equation}\label{driftDissipativityAtInf}
 \langle  x - y , b(x) - b(y) \rangle  \leq -K|x - y|^2 \qquad \text{ for all } x , y \in \mathbb{R}^d \text{ with } |x-y| > \mathcal{R} \,.
\end{equation}
\end{enumerate}
\end{assumption}

This enables us to cover a much wider class of SDEs, including e.g. equations with drifts given by double-well potentials (see the example in Section \ref{sec motivation}). We will show in Section \ref{sec motivation} that tools typically used in the global contractivity setting to study convergence in Wasserstein distances, such as the synchronous coupling and Talagrand's inequality, do not necessarily work under Assumptions \ref{as diss}. This forces us to look for an alternative approach.

Our method is based on the idea of controlling the standard Wasserstein distances $W_1$ and $W_2$ by specially constructed (pseudo) distances $W_f$ based on functions $f: [0, \infty) \to [0, \infty)$ that are concave on a compact interval and strictly convex at infinity. In order to briefly introduce our approach, let us focus on a single step $X^x_h$ of an Euler scheme started at $x \in \mathbb{R}^d$, i.e., $X^x_h = x + b(x)h + \sqrt{h} \xi$, where $\xi\sim N(0,I)$. If we now choose $y \in \mathbb{R}^d$, $y \neq x$ and consider $Y^y_h = y + b(y)h + \sqrt{h} \eta$ with an arbitrarily chosen $\eta\sim N(0,I)$, then $\cL(Y^y_h) = \cL(X^y_h)$ and straight from the definition of the Wasserstein distance $W_f$ we see that
$W_f(\cL(X^x_h),\cL(X^y_h)) \leq \mathbb{E} f(|X^x_h - Y^y_h|)$,
since the joint law of $(X^x_h, Y^y_h)$ is a coupling of $\cL(X^x_h)$ and $\cL(X^y_h)$. Hence in order to obtain sharp upper bounds on $W_f(\cL(X^x_h),\cL(X^y_h))$, one needs to be able to find pairs $(X^x_h, Y^y_h)$ that make $\mathbb{E} f(|X^x_h - Y^y_h|)$ as small as possible by choosing the joint distribution of $(\xi, \eta)$ in an appropriate way. However, in the present paper we are interested not only in quantifying distances between laws of Euler schemes started at different points, but also in distances between laws of Euler schemes and their perturbed versions. Namely, let $\widetilde{Y}$ be an arbitrarily chosen random variable. Under Assumptions \ref{as diss} we are able to prove the following inequality (see Theorem \ref{theoremPerturbation} and the comments after its proof) for all sufficiently small $h > 0$ with constants $c$, $C > 0$.
\begin{equation}\label{perturbationInequality}
\mathbb{E}f(|X_h^x - \widetilde{Y}|) \leq (1-c h)f(|x - y|) + C(1 + |x-y|) (\mathbb{E}|\widetilde{Y} - Y_h^y| + \mathbb{E}|\widetilde{Y} - Y_h^y|^2) \,.
\end{equation}
The idea behind this result is that if $\widetilde{Y}$ is chosen as a perturbation of $Y_h^y$ such that we can control the distance between $\widetilde{Y}$ and $Y_h^y$ in the $L^1$ and $L^2$ norms, then we can also control the distance between $\widetilde{Y}$ and $X_h^x$, via an auxiliary pseudo distance function $f$ that dominates all $L^p$ distances for $p \in [1,2]$.
 The exact form of the function $f$ and all the constants will be given in Theorem \ref{theoremPerturbation}. The important fact is that the function $f$ is chosen in such a way that there exist constants $a$, $A$ such that $r<a f(r)$ and $r^2<A f(r)$ for $r\in[0,\infty)$ (see Lemma \ref{corollarySquareComparison}) and hence (\ref{perturbationInequality}) yields bounds for $W_1$ and $W_2$ distances.

Our inequality (\ref{perturbationInequality}) is related to other perturbation results for Wasserstein distances, see the discussion in Section 2.3 of \cite{EberleMajka2018} and papers such as \cite{RudolfSchweizer, PillaiSmith, huggins2016quantifying}. As we explain in the sequel, examples of processes that we consider in this paper as perturbed versions of Euler schemes include an SDE (Section \ref{sectionULA}), an Euler scheme with a randomised drift (Section \ref{sectionStochasticGradient}) and an Euler scheme with a different discretisation level (Section \ref{sectionMLMC}). 

The Markov chain given by \eqref{eq euler} arises as a discretisation of a diffusion process given by 
\begin{equation}
\label{eq sde}	 
dY_t =   b(Y_t )\,dt  
+  dW_t\,,
\end{equation}
where $(W_t)_{t \geq 0}$ is a standard Brownian motion.  Assumptions \ref{as diss} guarantee that the solution does not blow up on $[0,\infty)$, see Chapter 3 in \cite{khasminskii2011stochastic}. It can be shown that the corresponding semigroup is Feller and consequently the Krylov-Bogolyubov theorem yields the existence of a unique invariant measure $\pi$. We remark that the asymptotic results on the discretisation of \eqref{eq sde} in the context of approximating invariant measures are rather well understood. We refer to  \cite{talay1990second,lamberton2002recursive,panloup2008recursive,pages2012ergodic} for a thorough overview on that topic. 

Working with the global contractivity \eqref{eq mono} assumption and the Euler discretisation \eqref{eq euler} with either constant or variable time-step, several authors in a series of papers \cite{dalalyan2017theoretical,durmus2017nonasymptotic,durmus2016high,cheng2017convergence} obtained precise bounds on $W_2(\cL(X_k),\pi)$ in terms of dimension and problem parameters. These bounds have been then improved in \cite{dalalyan2017user}.  Here we obtain precise convergence rates in the $L^1$ and $L^2$-Wasserstein distances working only with Assumptions \ref{as diss}. In particular, our bound is reminiscent of the bounds in \cite{dalalyan2017user,durmus2016high} for the constant step size algorithms. Indeed we show the following result.

\begin{theorem}\label{mainTheoremULA}
	Suppose that Assumptions \ref{as diss} are satisfied. Then there exist a function $f$ and constants $h_0$, $\hat{c}_1$, $\hat{c}_2$, $C_1$, $C_2$, $\widetilde{c}_1$, $\widetilde{c}_2 > 0$ such that for all $h \in (0, h_0)$
	\begin{equation}\label{eq mainULAW2}
	W_2(\cL (X_k), \pi) \leq C_2 (1- \hat{c}_2 h)^{k/2} \left( W_f(\cL(X_0), \pi) \right)^{1/2} + \widetilde{c}_2 h^{1/4}
	\end{equation}
	and
	\begin{equation}\label{eq mainULAW1}
	W_1(\cL (X_k), \pi) \leq C_1 (1- \hat{c}_1 h)^{k} W_f(\cL(X_0), \pi) + \widetilde{c}_1 h^{1/2} \,.
	\end{equation}
\end{theorem}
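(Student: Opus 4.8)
The plan is to compare the Euler chain $(X_k)_{k\ge0}$ from \eqref{eq euler} with the diffusion \eqref{eq sde} sampled along the discretisation grid and started in equilibrium: set $\bar X_k:=Y_{kh}$, where $(Y_t)_{t\ge0}$ solves \eqref{eq sde} with $Y_0\sim\pi$, so that $\cL(\bar X_k)=\pi$ for every $k$ by invariance of $\pi$. The whole argument is then a geometric-series estimate for the pseudo-distance $W_f$: the perturbation inequality \eqref{perturbationInequality} of Theorem \ref{theoremPerturbation} supplies the one-step contraction, the one-step strong error of the Euler scheme supplies the perturbation term, and the comparisons $r\le a f(r)$ and $r^2\le A f(r)$ of Lemma \ref{corollarySquareComparison} are used at the very end to pass from the $W_f$-bound to bounds for $W_1$ and $W_2$.

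First I would build the coupling $(X_{k+1},\bar X_{k+1})$ from $(X_k,\bar X_k)$ one step at a time. Conditionally on $(X_k,\bar X_k)=(x,y)$, pick a fresh Brownian increment $W$, let $\bar X_{k+1}$ be the solution of \eqref{eq sde} at time $h$ started at $y$ and driven by $W$, and apply Theorem \ref{theoremPerturbation} with $\widetilde Y:=\bar X_{k+1}$ and with the comparison Euler step chosen as $Y_h^y:=y+b(y)h+W_h$ (i.e.\ with Gaussian increment $\eta=W_h/\sqrt h$). This yields the Euler step $X_{k+1}:=X_h^x=x+b(x)h+\sqrt h\,\xi$, whose noise $\xi$ is coupled to $\eta$ through the construction of Theorem \ref{theoremPerturbation}, together with
\[
\mathbb{E}\!\left[f\big(|X_{k+1}-\bar X_{k+1}|\big)\,\middle|\,X_k=x,\ \bar X_k=y\right]\le(1-ch)\,f(|x-y|)+C(1+|x-y|)\big(E_1(y)+E_2(y)\big),
\]
where $E_p(y):=\mathbb{E}\big[\,|\widetilde Y-Y_h^y|^p\,\big|\,\bar X_k=y\,\big]$. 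The point of this particular choice is that $\widetilde Y-Y_h^y=\int_0^h\big(b(Y_s)-b(y)\big)\,ds$ is purely the drift-discretisation error, so \eqref{driftLipschitz} together with the elementary short-time bound $\mathbb{E}\big[\sup_{s\le h}|Y_s-y|^2\mid Y_0=y\big]\le c'h(1+|y|)^2$ (Gronwall, using that \eqref{driftLipschitz} forces $b$ to have linear growth) gives $E_1(y)\le c''h^{3/2}(1+|y|)$ and $E_2(y)\le c''h^{3}(1+|y|)^2$.

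Next I would take expectations and iterate. Since $(X_k)$ is the chain \eqref{eq euler} and $(\bar X_k)$ is the diffusion at the grid times, $(X_{k+1},\bar X_{k+1})$ is a coupling of $\cL(X_{k+1})$ and $\pi$ for each $k$; writing $e_k:=\mathbb{E}f(|X_k-\bar X_k|)$, so that $W_f(\cL(X_k),\pi)\le e_k$, and bounding $|x-y|\le|x|+|y|$, integration of the displayed inequality yields $e_{k+1}\le(1-ch)\,e_k+\widetilde C h^{3/2}$, where the constant $\widetilde C$ absorbs $C$, $c''$ and the uniform moments $\sup_k\mathbb{E}|X_k|^q$ and $\int|x|^q\,\pi(dx)$ for $q\le3$, all finite thanks to the Lyapunov estimate implied by Assumptions \ref{as diss}. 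Choosing the initial coupling $W_f$-optimal, so that $e_0=W_f(\cL(X_0),\pi)$, and summing the geometric series gives
\[
W_f(\cL(X_k),\pi)\le(1-ch)^k\,W_f(\cL(X_0),\pi)+\frac{\widetilde C}{c}\,h^{1/2}.
\]
Applying $W_1\le a\,W_f$ gives \eqref{eq mainULAW1} with $\hat c_1=c$, while applying $W_2(\cdot,\cdot)^2\le A\,W_f(\cdot,\cdot)$, taking square roots and using $\sqrt{u+v}\le\sqrt u+\sqrt v$ gives \eqref{eq mainULAW2} with $\hat c_2=c$, $C_2=\sqrt A$ and $\widetilde c_2=\sqrt{A\widetilde C/c}$.

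The genuinely hard part is already done inside Theorem \ref{theoremPerturbation}: constructing the function $f$ and the coupling of the Gaussian increments so that \eqref{perturbationInequality} holds, i.e.\ so that one contracts on the concave part of $f$ while only barely expanding on its strictly convex part. Granting that, the remaining care is bookkeeping: one must check that the one-step Euler error carries the state dependence $(1+|y|)$ in a form that is integrable against $\pi$ and against $\cL(X_k)$ uniformly in $k$ — which is exactly where the moment bounds, hence Assumptions \ref{as diss}, re-enter — and note that the per-step coupling can be selected measurably because Theorem \ref{theoremPerturbation} furnishes it by an explicit construction.
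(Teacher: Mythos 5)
Your proposal is correct and follows essentially the same route as the paper's: couple the Euler chain to the stationary diffusion through the auxiliary one-step Euler comparison step and the perturbation inequality of Theorem \ref{theoremPerturbation}, bound the one-step strong error by $O(h^{3/2})$ in $L^1$ (using uniform second moments of the chain and of the diffusion), iterate the resulting geometric recursion for $W_f$, and pass to $W_1$, $W_2$ via Lemma \ref{corollarySquareComparison}. The only deviation is in \eqref{eq mainULAW1}: the paper re-runs the argument with the concave distance $f_1$ of Theorem \ref{theoremContractivityConcave} to obtain $\widetilde{c}_1$ of order $\sqrt{d}$ rather than $d$ (cf.\ Remark \ref{remarkWhyWeUseConcaveContractivity}), whereas your direct use of $r\le e^{ar_2}f(r)$ still proves the statement as written, only with worse dimension dependence.
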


The precise values of all the constants and the formula defining the function $f$ in Theorem \ref{mainTheoremULA} will be given in Section \ref{sectionULA}.

Our next observation is that the perturbation inequality (\ref{perturbationInequality}) sheds light onto stochastic gradient algorithms or Langevin models with inaccurate/randomised gradients. Examples of such processes have been used in \cite{welling2011bayesian} in the context of sampling and studied in \cite{dalalyan2017user,Raginsky2017,teh2016consistency,vollmer2015non,nagapetyan2017true}. We remark that randomisation is a successful technique that is known to improve convergence properties for problems with non-smooth coefficients \cite{kruse2017error,przybylowicz2014strong,jentzen2009random}. 
 We define a function $\bar{b}:\bR^d \times \bR^n \rightarrow \bR^d$. Consider an $\mathbb{R}^n$-valued random variable $U$ such that $\bE[ \bar{b}(x,U)]=b(x)$ for all $x \in \mathbb{R}^d$. Let $(\xi_k)_{k=1}^{\infty}$, as before, be i.i.d.\ with $\xi_k \sim N(0,I)$ and take i.i.d.\ copies $(U_k)_{k=0}^{\infty}$ of $U$ that are independent of $(\xi_k)_{k=1}^{\infty}$. We define the following Markov chain     
\begin{equation} \label{eq ineuler}
\begin{cases}
\bar{X}_{k+1} = \bar{X}_k + \bar{b}(\bar{X}_k,U_{k})h + \sqrt{h} \xi_{k+1}\,, \quad k \geq 0 \,, \\
\bar{X}_0 \sim \mu_0 \,.
\end{cases}
\end{equation}
Note that for each $k$, the random variable $\bar{X}_k$ is independent of $U_k$ and that $\bE[\bar{b}(\bar{X}_k,U_k)|\bar{X}_k] = b(\bar{X}_k)$. For each $k$ we consider the conditional variance  
$
\bV[ \bar{b}(\bar{X}_k,U_{k}) | \bar{X}_k] := \bE [|  \bar{b}(\bar{X}_k,U_{k}) - b(\bar{X}_k) |^2 | \bar{X}_k] ,
$
which as we shall see plays a key role in our analysis in Section \ref{sectionStochasticGradient}. We need to impose the following assumption.

 \begin{assumptionsingle}[Variance of inaccurate drift] \label{as ina}
	There exist constants $\sigma$, $\alpha > 0$ such that for any $x \in \mathbb{R}^d$, any $h > 0$ and any random variable $U$ such that $\mathbb{E}[\bar{b}(x,U)] = b(x)$, we have
		\begin{equation}\label{ina estimatorVariance}
		\mathbb{E} \left| \bar{b}(x, U) - b(x) \right|^2 \leq \sigma^2 (1+|x|^2) h^{\alpha} \,.
		\end{equation}
\end{assumptionsingle}

Note that the dependence on $h$ of the right hand side of (\ref{ina estimatorVariance}) is related to the choice of the estimator $\bar{b}$. On the other hand, the constants $\sigma$ and $\alpha$ do not depend on $h$. We will discuss how to verify Assumption \ref{as ina} in the case where the drift is estimated by subsampling in Example \ref{ex subsampling}.

We can study properties of $(\bar X_k)_{k=0}^{\infty}$ by treating it as a perturbed version of $(X_k)_{k=0}^{\infty}$ given by (\ref{eq euler}).  
This allows us to study convergence of $\cL(\bar X_{k})$ to $\pi$. Indeed, using the fact that $W_2$ satisfies the triangle inequality, we have  $  W_2(\cL(\bar{X}_{k} ), \pi ) \leq W_2(\cL(\bar X_{k} ), \cL(X_{k}) ) + W_2(\cL(X_{k}) , \pi) $ and the bound on $ W_2(\cL(\bar{X}_{k} ), \cL(X_{k}) ) $ will follow from inequality (\ref{perturbationInequality}). Hence we obtain the following result.

\begin{theorem}\label{mainStochasticGradientTheorem}
	Let Assumptions \ref{as diss} and \ref{as ina} hold. Then there exist constants $h_0$, $\bar{C}_1$, $\bar{C}_2 > 0$ such that for all $h \in (0, h_0)$
	\begin{equation}\label{mainSGW2}
	W_2(\cL(\bar{X}_k), \pi) \leq W_2(\cL(X_k), \pi) + \bar{C}_2 h^{\alpha/4}
	\end{equation}
	and
	\begin{equation}\label{mainSGW1}
	W_1(\cL(\bar{X}_k), \pi) \leq W_1(\cL(X_k), \pi) + \bar{C}_1 h^{\alpha/2} \,.
	\end{equation}
\end{theorem}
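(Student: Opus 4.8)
The plan is to reduce everything, via the triangle inequality for $W_1$ and $W_2$ noted just before the statement, to a uniform-in-$k$ estimate of the form $W_2(\cL(\bar X_k),\cL(X_k)) \le \bar C_2 h^{\alpha/4}$ and $W_1(\cL(\bar X_k),\cL(X_k)) \le \bar C_1 h^{\alpha/2}$, and to obtain these from the perturbation inequality \eqref{perturbationInequality} iterated step by step along the two chains.

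First I would construct a coupling $(X_k,\bar X_k)_{k\ge 0}$ of the chains \eqref{eq euler} and \eqref{eq ineuler} by concatenating the one-step couplings underlying Theorem \ref{theoremPerturbation}. Start from $X_0=\bar X_0\sim\mu_0$; given $\cF_k:=\sigma(X_j,\bar X_j:j\le k)$, apply the construction of Theorem \ref{theoremPerturbation} with $x=X_k$, $y=\bar X_k$ and perturbed variable $\widetilde Y:=\bar X_{k+1}=\bar X_k+\bar b(\bar X_k,U_k)h+\sqrt h\,\eta_{k+1}$, where $\eta_{k+1}$ is the Gaussian increment used for $Y_h^{y}$ in that construction and $U_k$ is an independent copy of $U$. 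Since $\widetilde Y - Y_h^{y} = (\bar b(\bar X_k,U_k)-b(\bar X_k))\,h$, Assumption \ref{as ina} applied conditionally on $\bar X_k$ (with Jensen's inequality for the first moment) gives, for $h\le 1$,
\[
\mathbb E\bigl[\,|\widetilde Y - Y_h^{y}| + |\widetilde Y - Y_h^{y}|^2 \,\big|\, \cF_k\,\bigr] \le (\sigma+\sigma^2)\,h^{1+\alpha/2}\,(1+|\bar X_k|^2).
\]
Plugging this into \eqref{perturbationInequality}, applied conditionally on $\cF_k$ (legitimate because $x,y$ are $\cF_k$-measurable while $(U_k,\eta_{k+1})$ is independent of $\cF_k$), and writing $e_k:=\mathbb E f(|X_k-\bar X_k|)$, I obtain a recursion
\[
e_{k+1} \le (1-ch)\,e_k + C'\,h^{1+\alpha/2}\,\mathbb E\bigl[(1+|X_k-\bar X_k|)(1+|\bar X_k|^2)\bigr],
\]
valid for all $h$ below some threshold $h_0$, with $e_0=0$ since $X_0=\bar X_0$.

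The remaining ingredient is a moment bound uniform in $k$ and in $h\le h_0$: Assumptions \ref{as diss} yield a one-sided dissipativity estimate of the type $\langle x,b(x)\rangle\le -\tilde K|x|^2+\tilde C$, from which one gets $\sup_k\mathbb E|X_k|^p<\infty$ and, using in addition the linear-growth bound \eqref{ina estimatorVariance} to control the extra noise from the randomised drift, $\sup_k\mathbb E|\bar X_k|^p<\infty$ for every $p\ge 1$, with constants depending only on $\mu_0$, $p$ and the problem parameters. By Cauchy--Schwarz (or Young's inequality) this provides a finite $M:=\sup_{k,\,h\le h_0}\mathbb E[(1+|X_k-\bar X_k|)(1+|\bar X_k|^2)]$, and iterating the recursion gives $e_k\le (C'M/c)\,h^{\alpha/2}$ for all $k$. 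Finally, Lemma \ref{corollarySquareComparison} converts this: $W_1(\cL(X_k),\cL(\bar X_k))\le\mathbb E|X_k-\bar X_k|\le a\,e_k$ and $W_2(\cL(X_k),\cL(\bar X_k))^2\le\mathbb E|X_k-\bar X_k|^2\le A\,e_k$, which yields the orders $h^{\alpha/2}$ and $h^{\alpha/4}$ respectively, and the triangle inequality completes the proof.

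I expect the main obstacle to be the uniform moment bounds: keeping $\mathbb E|X_k|^p$ and $\mathbb E|\bar X_k|^p$ bounded uniformly over all $k$ and over $h\le h_0$ is exactly where the contractivity-at-infinity part of Assumptions \ref{as diss} and the variance control \eqref{ina estimatorVariance} are both genuinely used, and one has to track constants carefully so that $h_0$ and the final $\bar C_i$ depend only on the admissible data. The only other point requiring care is the bookkeeping of the conditional application of \eqref{perturbationInequality} and the verification that concatenating the one-step couplings indeed produces a coupling of the two Markov chains; this is routine once the one-step estimate above is in place.
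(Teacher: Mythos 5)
Your overall architecture is the same as the paper's: triangle inequality, a coupling of the two chains built by concatenating the one-step coupling behind Theorem \ref{theoremPerturbation}, a recursion for $\mathbb{E}f(|X_k-\bar X_k|)$ with one-step error controlled by Assumption \ref{as ina}, uniform moment bounds, and finally Lemma \ref{corollarySquareComparison} to pass to $W_1$ and $W_2$ (this is exactly the proof of Theorem \ref{theoremSGcomparison}, with the auxiliary chain $S_{k+1}=\bar X_k+hb(\bar X_k)+\sqrt h Z_{k+1}$ and the copy $G_k$ of the accurate scheme playing the role of your concatenated coupling).

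However, there is a genuine gap in how you close the recursion. You first bound the one-step error conditionally, $\mathbb{E}[\,|\widetilde Y-Y^y_h|+|\widetilde Y-Y^y_h|^2\mid\cF_k\,]\le(\sigma+\sigma^2)h^{1+\alpha/2}(1+|\bar X_k|^2)$, and then multiply by $(1+|X_k-\bar X_k|)$, so your error term is $\mathbb{E}\bigl[(1+|X_k-\bar X_k|)(1+|\bar X_k|^2)\bigr]$. To make this finite, let alone uniformly bounded, you need third (or, via Cauchy--Schwarz, fourth) moments of $\bar X_k$, and your claim that $\sup_k\mathbb{E}|\bar X_k|^p<\infty$ for every $p\ge1$ is not justified under the hypotheses of the theorem: Assumption \ref{as ina} controls only the second moment of $\bar b(x,U)-b(x)$, so moments of $\bar X_k$ of order higher than $2$ require an extra growth condition on $\bar b$ such as (\ref{driftEstimatorGrowth}) — this is precisely why Remark \ref{remarkPthMoments} invokes (\ref{driftEstimatorGrowth}), which is an assumption of Theorem \ref{th weak} but not of Theorem \ref{mainStochasticGradientTheorem}. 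The repair is the order of operations used in the paper: work with the precise inequality of Theorem \ref{theoremPerturbation}, in which only the \emph{first} power $|S_{k+1}-\bar X_{k+1}|$ is multiplied by the distance, apply Cauchy--Schwarz to the cross term $\mathbb{E}\bigl[|X_k-\bar X_k|\,|S_{k+1}-\bar X_{k+1}|\bigr]\le\bigl(\mathbb{E}|X_k-\bar X_k|^2\bigr)^{1/2}\bigl(\mathbb{E}|S_{k+1}-\bar X_{k+1}|^2\bigr)^{1/2}$ \emph{before} inserting (\ref{ina estimatorVariance}); then everything is expressed through $\mathbb{E}|X_k|^2$, $\mathbb{E}|\bar X_k|^2$ and $\mathbb{E}|S_{k+1}-\bar X_{k+1}|^2\le\sigma^2h^{2+\alpha}(1+\mathbb{E}|\bar X_k|^2)$, and the uniform second-moment bounds of Lemmas \ref{lemmaEulerMomentUniformEstimate} and \ref{lemmaEulerMomentUniformEstimateInacurate} (the latter being where contractivity at infinity and the variance bound are genuinely used, as you anticipated) suffice. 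With that modification your argument coincides with the paper's and yields the stated rates $h^{\alpha/2}$ in $W_1$ and $h^{\alpha/4}$ in $W_2$.
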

The precise values of the constants and the proof of Theorem \ref{mainStochasticGradientTheorem} can be found in Section \ref{sectionStochasticGradient}. Note that our bounds in Theorem \ref{mainStochasticGradientTheorem} are of similar form as the bounds obtained in Theorem 4 in \cite{dalalyan2017user} in a setting corresponding to the global contractivity assumption (\ref{eq mono}), i.e., in \cite{dalalyan2017user} the distance $W_2(\cL(\bar{X}_k), \pi)$ is also bounded by $W_2(\cL(X_k), \pi)$ plus an additional error term coming from the use of an inaccurate drift. However, the error term in $W_2$ in the global contractivity case in \cite{dalalyan2017user} is obtained under an assumption similar to our (\ref{ina estimatorVariance}) with $\alpha = 0$ and is of order $\sigma h^{1/2}$, whereas our error term is of order $\sigma h^{\alpha / 4}$ in $W_2$ and $\sigma^2 h^{\alpha / 2}$ in $W_1$ for $\alpha \geq 0$ (dependence on $\sigma$ of the constants $\bar{C}_1$ and $\bar{C}_2$ can be easily traced in Section \ref{sectionStochasticGradient}). We believe that the worse order of the constants in our case is a necessary consequence of the much more general contractivity at infinity assumption, however, it remains an open question whether our constants are actually optimal. To our knowledge, the only related result without assuming global contractivity of the drift was obtained in \cite{Raginsky2017}, see Proposition 10 therein, by using functional inequalities. However, the estimates in \cite{Raginsky2017} are not uniform in time.

 Note that the bounds we obtain in Theorem \ref{mainStochasticGradientTheorem} depend on the variance of the estimator $\bar{b}$ of the drift $b$ via $h^{\alpha}$ appearing in (\ref{ina estimatorVariance}). This is in contrast with the following weak convergence result, which at least in the context of randomised Euler schemes seems to be new.

\begin{theorem} \label{th weak}
	Let Assumption \ref{as ina} hold. Let $g\in C^{\infty}(\bR^d,\bR)$ with polynomial growth and assume that $b \in C^3$ has bounded derivatives and that there are constants $M_1$, $M_2 > 0$ such that for all $x \in \mathbb{R}^d$ we have
	\begin{equation}\label{eq th weak dissipativity}
     \langle x , b(x) \rangle \leq M_2 - M_1|x|^2 \,.
	\end{equation}
	Furthermore, assume that for any $p \geq 1$ there is a constant $C_{\bar{b}}^p > 0$ such that for any $x \in \mathbb{R}^d$ and for any random variable $U$ with $\mathbb{E}[\bar{b}(x,U)] = b(x)$ we have 
	\begin{equation}\label{driftEstimatorGrowth}
	\mathbb{E}|\bar{b}(x,U)|^p \leq C_{\bar{b}}^p(1 + |x|^p) \,.
	\end{equation}
	Then there exists a constant $c_w>0$ independent of $h$ such that
	\[
	\sup_{k\in \bN }\left| \bE[ g(Y_{k h}) ]   - \bE[g(\bar X_{k})] \right| \leq c_w h\,.
	\]	
\end{theorem}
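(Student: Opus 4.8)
The plan is to run the classical Talay--Tubaro weak-error argument, made uniform in $k$ by exploiting the ergodicity of the diffusion \eqref{eq sde}. Write $Y^x$ for the solution of \eqref{eq sde} started at $x$ and set $u(t,x):=\bE[g(Y^x_t)]$, the solution of the backward Kolmogorov equation $\partial_t u=\cA u$, $u(0,\cdot)=g$, with $\cA\varphi=\tfrac12\D\varphi+\langle b,\nabla\varphi\rangle$; throughout, ``polynomial growth of $g$'' is understood to include all derivatives of $g$, as is customary. Taking $Y_0\sim\mu_0$ and $\bar X_0\sim\mu_0$, the Chapman--Kolmogorov identity $u(s+h,x)=\bE[u(s,Y^x_h)]$ and a telescoping sum give
\[
\bE[g(Y_{kh})]-\bE[g(\bar X_k)]=\sum_{j=0}^{k-1}e_j,\qquad e_j:=\bE\big[u(s_j+h,\bar X_j)-u(s_j,\bar X_{j+1})\big],\quad s_j:=(k-1-j)h.
\]
Conditioning on $\bar X_j=x$ we have $e_j=\bE[e_j(\bar X_j)]$ with $e_j(x)=\bE[u(s_j,Y^x_h)]-\bE[u(s_j,x+\bar b(x,U)h+\sqrt h\,\xi)]$, so the task reduces to showing $|e_j(x)|\leq Ch^2(1+|x|^Q)e^{-\lambda s_j}$ and summing.

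For the one-step error I would insert one step of the non-randomised scheme \eqref{eq euler} and split $e_j(x)=e^{(1)}(x)+e^{(2)}(x)$ with $e^{(1)}(x)=\bE[u(s_j,Y^x_h)]-\bE[u(s_j,\widehat X^x_h)]$, $\widehat X^x_h:=x+b(x)h+\sqrt h\,\xi$, and $e^{(2)}(x)=\bE[u(s_j,\widehat X^x_h)]-\bE[u(s_j,x+\bar b(x,U)h+\sqrt h\,\xi)]$. For $e^{(1)}$ — the local weak error of Euler — I would use the pathwise identity $Y^x_h-\widehat X^x_h=\int_0^h(b(Y^x_r)-b(x))\,dr$ (coupling the Brownian increments) together with a Taylor expansion of $u(s_j,\cdot)$ and the vanishing of the odd moments of the Brownian increment, organised so that only the first three derivatives of $u$ appear; this yields $|e^{(1)}(x)|\leq Ch^2(1+|x|^Q)\max_{1\leq|\beta|\leq3}\|D^\beta u(s_j,\cdot)\|$ in a polynomially-weighted sup-norm, using only that $b\in C^3$ has bounded derivatives. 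For $e^{(2)}$ the crucial point is that $\widehat X^x_h$ does not depend on $U$ while $\bE_U[\bar b(x,U)-b(x)]=0$, so the first-order term in the Taylor expansion of $u(s_j,\widehat X^x_h+(\bar b(x,U)-b(x))h)$ around $\widehat X^x_h$ vanishes in expectation and only a quadratic remainder survives, whence $|e^{(2)}(x)|\leq Ch^2\,\bE|\bar b(x,U)-b(x)|^2\,\|D^2u(s_j,\cdot)\|\,(1+|x|^Q)$. Bounding $\bE|\bar b(x,U)-b(x)|^2$ by $\sigma^2(1+|x|^2)h^{\alpha}$ via Assumption~\ref{as ina} (or merely by $C(1+|x|^2)$ via \eqref{driftEstimatorGrowth}) makes this $O(h^{2+\alpha})$; this is exactly why the variance of $\bar b$ enters only at order $h^2$ and therefore does not affect the rate.

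It then remains to supply two a priori estimates. The first is a uniform-in-$k$ moment bound $\sup_k\bE|\bar X_k|^p\leq C_p<\infty$ for all relevant $p$, valid for $h<h_0$: this follows from a Foster--Lyapunov drift inequality $\bE[(1+|\bar X_{k+1}|^2)^{p/2}\mid\bar X_k]\leq(1-c_ph)(1+|\bar X_k|^2)^{p/2}+C_ph$, obtained by expanding the $p/2$-th power of $|x+\bar b(x,U)h+\sqrt h\,\xi|^2$ and using $\langle x,b(x)\rangle\leq M_2-M_1|x|^2$ from \eqref{eq th weak dissipativity} together with \eqref{driftEstimatorGrowth} (and, analogously, $\sup_t\bE|Y^x_t|^p\leq C_p(1+|x|^p)$ for the diffusion, needed in the one-step analysis). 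The second, and the main obstacle, is the exponential decay of the spatial derivatives of $u$: $|D^\beta u(t,x)|\leq C(1+|x|^Q)e^{-\lambda t}$ for $1\leq|\beta|\leq3$ and all $t\geq0$. For bounded $t$ I would differentiate $u(t,x)=\bE[g(Y^x_t)]$ in $x$ under the expectation (legitimate since $b\in C^3$), getting a bound polynomial in $x$ and uniform over $t$ in any bounded interval because $Db$ is bounded, so the tangent flow does not explode there. For large $t$ I would combine the geometric ergodicity of \eqref{eq sde} — a consequence of the Lyapunov condition \eqref{eq th weak dissipativity} and a minorisation coming from the non-degeneracy of the noise, yielding $|u(t,x)-\pi(g)|\leq C(1+|x|^Q)e^{-\lambda t}$ for the invariant measure $\pi$ — with the smoothing property of the transition semigroup $(P_t)$ of \eqref{eq sde}, e.g.\ the Bismut--Elworthy--Li formula applied over a unit time-window (where the tangent flow is again bounded): writing $u(t,\cdot)=P_1(u(t-1,\cdot)-\pi(g))+\pi(g)$ and differentiating $P_1$ transfers the exponential decay of $u(t-1,\cdot)-\pi(g)$ to the derivatives of $u(t,\cdot)$ with no short-time blow-up.

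Putting the pieces together, $|e_j|\leq\bE|e_j(\bar X_j)|\leq Ch^2e^{-\lambda s_j}\big(1+\sup_k\bE|\bar X_k|^Q\big)$, whence
\[
\big|\bE[g(Y_{kh})]-\bE[g(\bar X_k)]\big|\leq Ch^2\sum_{j=0}^{k-1}e^{-\lambda(k-1-j)h}\leq\frac{Ch^2}{1-e^{-\lambda h}}\leq c_wh,
\]
uniformly in $k$, which is the assertion. I expect the genuinely delicate step to be the time-uniform control of the second and third derivatives of $u$: under a global contractivity assumption this would be immediate from a synchronous coupling, but here $b$ is dissipative only as a vector field (and not $Db$), so the tangent flow need not contract and one must rely on the ergodicity-plus-smoothing mechanism while keeping the short-time behaviour bounded; the secondary technical point is the bookkeeping in $e^{(1)}$ that keeps the local error at the level of three derivatives of $u$.
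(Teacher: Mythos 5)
Your proposal is correct in substance and reaches the theorem by the same underlying mechanism as the paper — backward Kolmogorov equation, time-uniform polynomially-weighted exponential decay of the derivatives of $u$, and the observation that unbiasedness of $\bar b$ makes the randomisation enter only through a second-order (variance) term, so it costs $O(h^2)$ per step and does not affect the rate — but the organisation differs in two ways worth noting. First, you telescope discrete one-step errors via Chapman--Kolmogorov (Talay--Tubaro style), splitting each local error into the standard Euler weak error $e^{(1)}$ and a randomisation term $e^{(2)}$ whose first-order contribution vanishes by independence of $U$ and $\xi$; the paper instead applies It\^o's formula to $u(T-t,\bar X_t)$ along the continuous interpolation of the scheme and splits the result into the three terms $E_1,E_2,E_3$, handling the variance term $E_1$ by pairing it with the increment of $\partial_x u$, which supplies the extra factor $h$. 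A consequence of the paper's bookkeeping — in particular its use of the differentiated Kolmogorov equation \eqref{eq new pde} to eliminate $\partial^3_{xxx}u$ — is that only derivative bounds of order $\leq 2$ are needed, whereas your route needs them up to order $3$. Second, and more significantly, the paper simply quotes these exponential-decay derivative estimates from Theorem 3.4 of \cite{talay1990second} (which is exactly why hypothesis \eqref{eq th weak dissipativity} and $b\in C^3$ with bounded derivatives appear), while you propose to re-derive them via geometric ergodicity of \eqref{eq sde} plus Bismut--Elworthy--Li smoothing over a unit time window; this is feasible under the stated assumptions and makes the argument more self-contained, but it is substantially heavier (higher-order BEL-type formulas for $D^2P_1$ and $D^3P_1$, weighted ergodicity estimates), and you correctly identify it as the delicate step — it is precisely the input the paper obtains by citation. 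The remaining ingredients of your plan (uniform-in-$k$ moment bounds for $\bar X_k$ via a Foster--Lyapunov drift using \eqref{eq th weak dissipativity} and \eqref{driftEstimatorGrowth}, and the geometric summation of the weighted local errors) match the paper's Remark \ref{remarkPthMoments} and final estimate, so I see no genuine gap.
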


Note that condition (\ref{eq th weak dissipativity}) is implied by (\ref{driftDissipativityAtInf}), see also Lemma \ref{lemmaContractivityAtInfinityImpliesLyapunov}. Moreover, condition (\ref{driftEstimatorGrowth}) for all $p \geq 1$ is a relatively weak assumption that is implied e.g.\ by the Lipschitz continuity of $\bar{b}(\cdot, U)$ (cf.\ (\ref{ina driftLipschitz})) and a moment bound on $\bar{b}(0,U)$ for all $U$ as in (\ref{driftEstimatorGrowth}). The proof of Theorem \ref{th weak}, unlike all the other results in this paper, does not rely on the perturbation inequality (\ref{perturbationInequality}) and instead uses PDE methods and estimates from \cite{talay1990second}. The proof can be found in Section \ref{sectionWeak}. The above result, in addition to being interesting on its own, will be also applied in our analysis of Multi-level Monte Carlo (MLMC) in Section \ref{sectionMLMC}.

Non-asymptotic results on the $L^2$-Wasserstein distance pave the way to the analysis of the MLMC method, which is a variance reduction technique introduced in \cite{heinrich2001multilevel}, \cite{MR2187308} and \cite{giles2008multilevel}. Let $\pi$ be the invariant measure of (\ref{eq sde}) and let $g\in C^{\infty}(\bR^d,\bR)$ be Lipschitz. Our framework allows us to consider multi-level type estimators of $\int_{\mathbb{R}^d} g(x) \pi(dx)$, involving Euler schemes with inaccurate drifts, which have been treated before only in \cite{szpruch2016multi}. However, the authors of \cite{szpruch2016multi} considered exclusively the globally contractive case, whereas in the present paper we employ the coupling method in order to deal with the more general contractivity at infinity setting. The multi-level estimator will be introduced in detail in Section \ref{sectionMLMC}. For our results on MLMC with inaccurate drift we need to impose additional assumptions.

 \begin{assumption}[MLMC with inaccurate drift] \label{as MLMCina}
	The function $\bar{b} : \mathbb{R}^d \times \mathbb{R}^n \to \mathbb{R}^d$ satisfies the following conditions:
	\begin{enumerate}[i)]
				\item Lipschitz condition: There is a constant $\bar{L} > 0$ such that for all $x$, $y \in \mathbb{R}^d$ and all random variables $U$ such that $\mathbb{E}[\bar{b}(x,U)] = b(x)$ and $\mathbb{E}[\bar{b}(y,U)] = b(y)$ we have
		        \begin{equation}\label{ina driftLipschitz}
				|\bar{b}(x,U) - \bar{b}(y,U)| \leq \bar{L}|x - y| \text{ a.s.}
				\end{equation}
				\item Contractivity at infinity condition: There exist constants $\bar{K}$, $\bar{R} > 0$ such that for all $x$, $y \in \mathbb{R}^d$ with $|x - y| \geq \bar{R}$ and for all random variables $U$ such that $\mathbb{E}[\bar{b}(x,U)] = b(x)$ and $\mathbb{E}[\bar{b}(y,U)] = b(y)$ we have
				\begin{equation} \label{ina contractivity}
				\langle  x - y , \bar{b}(x, U) - \bar{b}(y, U) \rangle \leq - \bar{K} |x - y|^2 \text{ a.s.}
				\end{equation}
	\end{enumerate}
\end{assumption}

We have the following result.

\begin{theorem}\label{theoremMainMLMC}
	Let all the assumptions of Theorem \ref{th weak} hold. Additionally, suppose that Assumptions \ref{as MLMCina} are satisfied and that $g$ is Lipschitz. Then for the estimation of $\int_{\mathbb{R}^d} g(x) \pi(dx)$ by MLMC with inaccurate drift we have computational complexity $\mathcal{O}(\epsilon^{-2-(1-\min\{1,\alpha\}/2)}\log \epsilon)$ with $\alpha$ given in (\ref{ina estimatorVariance}), as opposed to complexity $\mathcal{O}(\epsilon^{-3} \log \epsilon)$ in the standard Monte Carlo approach.
\end{theorem}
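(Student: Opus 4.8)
The plan is to assemble the standard MLMC complexity theorem of Giles (a.k.a. the complexity theorem from \cite{giles2008multilevel}) from three ingredients: a weak error estimate, a strong (variance) estimate between consecutive levels, and a cost-per-sample estimate, and then optimise over the number of levels $L$ and the samples-per-level. Concretely, fix a biased horizon $k = k(h)$ and discretisation levels $h_\ell = 2^{-\ell} h_0$, and write the MLMC estimator for $\int g\,d\pi$ as a telescoping sum $\mathbb{E}[g(\bar X^{(0)}_{k_0})] + \sum_{\ell=1}^L \mathbb{E}[g(\bar X^{(\ell)}_{k_\ell}) - g(\bar X^{(\ell-1)}_{k_{\ell-1})}]$, where on each level the two chains are run with the \emph{coupled} driving noise and \emph{coupled} randomised drifts so that Assumptions \ref{as MLMCina} apply pathwise. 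First I would bound the overall bias: by the triangle inequality the bias splits into (a) the time-discretisation-to-SDE error, controlled by the weak rate $\mathcal{O}(h)$ from Theorem \ref{th weak} (its hypotheses are assumed here), and (b) the SDE-to-invariant-measure error $|\mathbb{E}[g(Y_{kh})] - \int g\,d\pi| \le \mathrm{Lip}(g)\, W_1(\cL(Y_{kh}),\pi)$, which decays geometrically in $kh$ because the SDE inherits a $W_1$-contraction (this is the $h\to 0$ limit of the chain estimates, or can be read off from the coupling construction underlying Theorem \ref{mainTheoremULA}); choosing $k h \gtrsim \log(1/\epsilon)$ makes (b) negligible and leaves total bias $\mathcal{O}(h + \epsilon)$, so we take $h = h_L \sim \epsilon$, i.e. $L \sim \log_2(1/\epsilon)$.

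The heart of the argument is the level variance. I would show
\[
\mathrm{Var}\bigl(g(\bar X^{(\ell)}_{k_\ell}) - g(\bar X^{(\ell-1)}_{k_{\ell-1})}\bigr) \le \mathrm{Lip}(g)^2\, \mathbb{E}\bigl|\bar X^{(\ell)}_{k_\ell} - \bar X^{(\ell-1)}_{k_{\ell-1}}\bigr|^2 \lesssim h_\ell^{\min\{1,\alpha\}},
\]
where the key step is the $L^2$ strong estimate between two coupled randomised Euler chains at neighbouring step sizes. For this I would invoke the perturbation inequality (\ref{perturbationInequality}) with the function $f$: run the coarse chain as the base process and the fine chain (plus its randomised-drift fluctuation) as the perturbation $\widetilde Y$, so that one gets a one-step recursion of the form $\mathbb{E}f(|X^{(\ell-1)}_{k+1} - X^{(\ell)}_{k+1}|) \le (1-ch_\ell)\mathbb{E}f(|X^{(\ell-1)}_k - X^{(\ell)}_k|) + C(1+\cdots)\cdot(\text{one-step }L^1+L^2\text{ increments})$. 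The one-step increments come from two sources: the standard Euler-vs-Euler discretisation mismatch at nested step sizes, which contributes $\mathcal{O}(h_\ell)$ to the $L^1/L^2$ error per macro-step (the classical strong order $1/2$ in $L^2$, squared), and the randomised-drift variance, which by Assumption \ref{as ina} contributes $\sigma^2(1+\mathbb{E}|X|^2)h_\ell^{\alpha}$; summing the geometric recursion and using the Lyapunov moment bound (Lemma \ref{lemmaContractivityAtInfinityImpliesLyapunov}, via (\ref{eq th weak dissipativity})) to control $\sup_k \mathbb{E}|\bar X^{(\ell)}_k|^2$ uniformly in $\ell$ and $k$, together with Lemma \ref{corollarySquareComparison} to pass from $f$ back to $|\cdot|^2$, yields $\mathbb{E}|\bar X^{(\ell)}_{k_\ell} - \bar X^{(\ell-1)}_{k_{\ell-1})}|^2 \lesssim h_\ell + h_\ell^{\alpha} \asymp h_\ell^{\min\{1,\alpha\}}$. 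Note the coupling here must be the \emph{same} coupling used throughout the paper, so that the contraction constant $c$ is uniform; that is exactly why Assumptions \ref{as MLMCina} (pathwise Lipschitz and pathwise contractivity at infinity for $\bar b$) are imposed rather than just the in-expectation versions.

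Finally I would feed $\beta := \min\{1,\alpha\}$ (variance exponent, $V_\ell \lesssim h_\ell^{\beta} = 2^{-\beta\ell}$), $\gamma := 1$ (cost per sample on level $\ell$ is $\mathcal{O}(k_\ell) = \mathcal{O}(h_\ell^{-1}) = \mathcal{O}(2^{\ell})$, since the biased horizon $k_\ell h_\ell$ is fixed $\sim\log(1/\epsilon)$ — hence the $\log\epsilon$ factor), and weak order $1$ into the Giles complexity theorem. Since here $\beta < \gamma$ in general (when $\alpha < 1$), the dominant regime gives total cost $\mathcal{O}(\epsilon^{-2}\,(\text{number of levels})\cdot\epsilon^{-(\gamma-\beta)}) = \mathcal{O}(\epsilon^{-2-(1-\min\{1,\alpha\}/2)}\log\epsilon)$ after optimising the $N_\ell$ via the usual Lagrange-multiplier allocation $N_\ell \propto \sqrt{V_\ell/\mathrm{cost}_\ell}$; I would write this out with the explicit $2^{\ell}$-geometric sums rather than quoting a black-box theorem, to keep the dependence on $\alpha$ transparent. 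When $\alpha \ge 1$ the same computation collapses to the classical $\mathcal{O}(\epsilon^{-2}(\log\epsilon)^2)$-type rate, consistent with the stated bound. The main obstacle I anticipate is the uniform-in-time strong $L^2$ estimate between the two randomised chains at nested step sizes with a coupling constant that does not degrade as $h_\ell \to 0$: one has to run the perturbation recursion (\ref{perturbationInequality}) across $\Theta(\log(1/\epsilon)/h_\ell)$ macro-steps, so any $h$-dependence hidden in the constants $c, C$ or in the moment bounds would destroy the telescoping; checking that (\ref{perturbationInequality}) and Lemma \ref{lemmaContractivityAtInfinityImpliesLyapunov} are genuinely uniform in $h$ for $h < h_0$, and that nesting the step sizes (so the fine-grid Brownian increments sum exactly to the coarse-grid ones under the shared coupling) is compatible with the reflection/synchronous coupling used to prove (\ref{perturbationInequality}), is where the real care is needed.
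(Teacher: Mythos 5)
Your overall architecture matches the paper's: split the bias into an SDE-to-$\pi$ term (geometric in $T$, hence $T\sim\log(1/\epsilon)$ and the extra $\log\epsilon$ factor) plus a weak-discretisation term of order $h$ from Theorem \ref{th weak}, obtain a uniform-in-time coupled level-variance bound, and feed $\widetilde{\alpha}=1$, $\gamma=1$ and the variance exponent $\beta$ into the Giles complexity theorem. However, there is a genuine gap in the key quantitative step, the level variance. You claim $\mathbb{E}\bigl|\bar X^{(\ell)}-\bar X^{(\ell-1)}\bigr|^2\lesssim h_\ell^{\min\{1,\alpha\}}$, i.e.\ $\beta=\min\{1,\alpha\}$, and assert that this follows by "summing the geometric recursion" obtained from the perturbation inequality (\ref{perturbationInequality}). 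That recursion does not give this exponent: the dominant per-macro-step input is the \emph{first-moment} perturbation $\mathbb{E}|\bar S_{(k+2)h}-\bar X^c_{(k+2)h}|$, which (combining the nested-grid Euler mismatch with the drift-randomisation variance (\ref{ina estimatorVariance})) is of order $h^{1+\min\{1,\alpha\}/2}$, and summing the geometric series divides by $ch$, leaving $h^{\min\{1,\alpha\}/2}$ in the $f$-distance and hence, via Lemma \ref{corollarySquareComparison}, in $W_2^2$. This is exactly what the paper proves in Theorem \ref{theoremInaccurateMLMCVarianceBound}: $\beta=\min\{\alpha_c,1\}/2$, not $\min\{1,\alpha\}$. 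The uniform-in-time price of the coupling approach is precisely this halving of the strong exponent (compare the remark on Fang--Giles, whose stronger rates come with time-growing variance); your appeal to "classical strong order" over a fixed horizon does not survive the $T\sim\log(1/\epsilon)$, $h$-uniform requirement that you yourself flag as the main obstacle.

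Moreover, your claimed $\beta$ is internally inconsistent with the complexity you state. Plugging $\beta=\min\{1,\alpha\}$, $\gamma=1$, $\widetilde{\alpha}=1$ into the Giles formula gives $\epsilon^{-2-(1-\min\{1,\alpha\})}\log\epsilon$, and for $\alpha\geq 1$ the $\beta=\gamma$ regime, i.e.\ an $\epsilon^{-2}\log^2\epsilon$-type cost -- which you describe as "consistent with the stated bound", but the stated bound at $\alpha\geq1$ is $\epsilon^{-5/2}\log\epsilon$ (and the paper explicitly says the accurate-drift case only improves $\epsilon^{-3}$ to $\epsilon^{-5/2}$). The stated exponent $2+(1-\min\{1,\alpha\}/2)$ is what you get from $\beta=\min\{1,\alpha\}/2$, so either your variance claim or your final cost is wrong; as argued above, it is the variance claim that fails under this method. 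A secondary omission: the fine chain uses two independent drift randomisations $U^f_{kh},U^f_{(k+1)h}$ per coarse step with a single $U^c_{kh}$, so one needs both $\cL(U^f)=\cL(U^c)$ (for the telescoping identity) and a quantitative coupling condition on $\bar b(x,U^f_{kh})+\bar b(x,U^f_{(k+1)h})-2\bar b(x,U^c_{kh})$ -- the paper's Assumption \ref{as ina MLMC Ucoupling}, with the independent choice giving $\alpha_c=\alpha$; "coupled randomised drifts so that Assumptions \ref{as MLMCina} apply pathwise" does not by itself pin down the exponent entering the variance bound.
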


The proof of Theorem \ref{theoremMainMLMC} and a detailed description of our MLMC estimator can be found in Section \ref{sectionMLMC}. Note that our results apply also to MLMC for Euler schemes with non-randomised drifts. There $\alpha = \infty$ and we obtain a gain in complexity from $\mathcal{O}(\epsilon^{-3} \log \epsilon)$ to $\mathcal{O}(\epsilon^{-5/2} \log \epsilon)$ under Assumptions \ref{as diss}.

\begin{remark}
	While we were working on the present paper, Fang and Giles in \cite{FangGiles2018} independently obtained results on MLMC for SDEs with non-globally contractive drifts using a change of measure argument. Their approach, contrary to ours, does not aim at obtaining bounds on the variance that are uniform in time. While their variance grows linearly with time, they have better strong convergence rates and hence in the accurate drift case overall complexity of their algorithm is better than here. However, our framework is more flexible and allows us to cover also Euler schemes with inaccurate drifts.
\end{remark}

\begin{remark}
While completing this work, the paper \cite{ChengJordan2018} appeared, where results analogous to our Theorem \ref{mainTheoremULA} were obtained under assumptions on the drift similar to ours, although only in the $L^1$-Wasserstein distance. More precisely, the focus in \cite{ChengJordan2018} is on determining the smallest number of iterations of the Euler scheme required to approach the invariant measure of the SDE (\ref{eq sde}) in the $W_1$ distance with precision $\varepsilon$. In our setting, we can infer from our Theorem \ref{mainTheoremULA} that this number of iterations is of order $\mathcal{O}(d/\varepsilon^2)$, which is the same as in \cite{ChengJordan2018}. To see this, note that the constant $\widetilde{c}_1$ in (\ref{eq mainULAW1}) is of order $\mathcal{O}(\sqrt{d})$, cf. (\ref{eq ULA W1}) and (\ref{defCdif}), and hence a similar analysis as in the proof of Theorem 2.1 in \cite{ChengJordan2018} gives the required result. Interestingly, for an analogous problem in the $L^2$ Wasserstein distance, our estimates give us a required number of iterations of order $\mathcal{O}(d^2/\varepsilon^4)$, since the constant $\widetilde{c}_2$ in (\ref{eq mainULAW2}) is also of order $\mathcal{O}(\sqrt{d})$, but is multiplied by $h^{1/4}$ instead of $h^{1/2}$. However, we do not know whether this result is sharp or just an artifact of our proof. We are not aware of any comparable estimates in the $W_2$ distance in the non-convex setting in the literature. Furthermore, we remark that determining optimality of such bounds is usually a non-trivial task, as it requires finding lower bounds for the Wasserstein distances, see e.g.\ Example 4 in \cite{Eberle2016} for a related discussion in the diffusion case.
\end{remark}

\section{Contractivity of Euler schemes and applications}\label{sectionMain}

Before we present our results in detail, let us briefly discuss why the classical methods may fail if we do not assume global log-concavity.

\subsection{Motivation}\label{sec motivation}

\paragraph{Synchronous coupling}

It is well known that under the global convexity assumption (\ref{eq mono}) on the drift, we can show that Euler schemes are contractive in the $L^2$ distance just by applying the synchronous coupling. Namely, if we have
\begin{equation}\label{oneStepEuler}
X' := x + hb(x) + \sqrt{h}Z =: \hat{x} + \sqrt{h}Z \,,
\end{equation}
where $Z \sim N(0,I)$ and we define
\begin{equation*}
Y' := y + hb(y) + \sqrt{h}Z =: \hat{y} + \sqrt{h}Z \,,
\end{equation*}
then for all sufficiently small $h$ we obtain
\begin{equation*}
\mathbb{E}|X' - Y'|^2 = |\hat{x} - \hat{y}|^2 \leq (1-ch)|x-y|^2
\end{equation*}
for some constant $c \in (0,1)$.
However, under the dissipativity at infinity assumption (\ref{driftDissipativityAtInf}) the synchronous coupling is no longer sufficient. As a one-dimensional example, consider the function
\begin{equation*}
U(x) := x^2 (g_n(x))^2 + a^2 - 2a x g_n(x) 
\end{equation*}
for some parameters $a > 0$ and $n \geq 1$, where
\begin{equation*}
g_n(x) := \begin{cases}
- n &\text{ if } x \in (-\infty,-n) \\
x &\text{ if } x \in [-n,n] \\
n &\text{ if } x \in (n, \infty) 
\end{cases} \,.
\end{equation*}
The function $U$ is constructed by truncating the function $x \mapsto (x^2 - a)^2$ so that $U'$ is Lipschitz. We consider the drift
\begin{equation*}
b(x) = - U'(x) \,.
\end{equation*}
It is easy to check that $b$ satisfies the dissipativity at infinity assumption. Indeed, when $|x|$, $|y| \leq n$ then we have
\begin{equation}\label{exampleDissipativity}
\langle x - y , b(x) - b(y) \rangle = - 4(x-y)^2 (x^2 + xy + y^2 - a)
\end{equation}
and we see that there exist constants $R_0$, $C > 0$ such that we have
\begin{equation*}
\langle x - y , b(x) - b(y) \rangle \leq - C (x-y)^2
\end{equation*}
when $|x - y| > R_0$. Similarly, when $|x|$, $|y| \geq n$, we get
\begin{equation*}
\langle x - y , b(x) - b(y) \rangle = -2n(x-y)^2 + 2an\left( \frac{x}{|x|} - \frac{y}{|y|} \right)(x-y)
\end{equation*}
and again if the distance $|x-y|$ is large enough, then the desired inequality holds. When $y > n > x$ then the dissipativity condition
\begin{equation*}
\left(- 4x^3 + 4ax +2ny - 2an \frac{y}{|y|} \right)(x-y) \leq - C(x-y)^2
\end{equation*}
is equivalent to
\begin{equation*}
- 4x^3 + (4a + C)x +(2n-C)y - 2an \frac{y}{|y|} \geq 0 \,,
\end{equation*}
which we can make sure is satisfied by choosing the parameters in an appropriate way, and the other cases follow by symmetry. However, from (\ref{exampleDissipativity}) we also see that there exist constants $R_1$, $R_2 > 0$ such that when $|x|$, $|y| \leq R_1$ and $|x - y| \leq R_2$, we have
\begin{equation*}
\langle x - y , b(x) - b(y) \rangle \geq C_1(x-y)^2
\end{equation*}
for some constant $C_1 > 0$. Hence we see that if we use the synchronous coupling for our Euler scheme, we end up having
\begin{equation*}
\begin{split}
\mathbb{E}|X' - Y'|^2 &= |\hat{x} - \hat{y}|^2 = |x - y|^2 + h^2 |b(x) - b(y)|^2 + 2h (b(x) - b(y))(x - y) \\
&\geq |x-y|^2 + 2hC_1|x-y|^2 
\end{split}
\end{equation*}
and thus we cannot have a contraction.

\paragraph{Talagrand inequality}

An alternative approach to proving contractivity of Euler schemes in the $L^2$-Wasserstein distance in the global convexity setting relies on the fact that the Gaussian measure $\mu$ with covariance matrix $\Sigma$ satisfies the Talagrand inequality with the constant being the largest eigenvalue $\lambda_{max}(\Sigma)$, i.e., for any measure $\nu$ absolutely continuous with respect to $\mu$ we have
\begin{equation}\label{ineq talagrand}
W_2(\nu, \mu)^2 \leq 2 \lambda_{max}(\Sigma) H(\nu | \mu) = 2 \lambda_{max}(\Sigma) \int \log{\frac{d \nu}{d \mu}} d \nu \,,
\end{equation}
see e.g. \cite{Talagrand1996} or page 2726 in \cite{DjelloutGuillinWu2004} and the references therein. Namely, taking $\nu = N(x + hb(x), hI)$ and $\mu = N(y + hb(y), hI)$ we have
\begin{equation*}
\begin{split}
H(\nu | \mu) &= \int \log \exp{\left(\frac{-(z - (x + hb(x)))^2}{2h}\right)} \exp{\left(\frac{(z - (y + hb(y)))^2}{2h}\right)} \nu(dz) \\
&= \frac{1}{2h} \int  \left( -z^2 +2z(x+hb(x)) - (x+hb(x))^2 \right) + \left( z^2- 2z(y+hb(y)) + (y+hb(y))^2 \right) \nu(dz) \\
&= \frac{1}{2h} \left( (x+hb(x))^2 - 2(x+hb(x))(y+hb(y)) + (y + hb(y))^2 \right) \\
&= \frac{1}{2h} \left( x+ hb(x) - y - h b(y) \right)^2 
\end{split}
\end{equation*} 
and hence the right hand side of (\ref{ineq talagrand}) is equal to $(\hat{x} - \hat{y})^2$. In the global convexity setting this can be bounded from above by $(1-ch)|x-y|^2$ for all sufficiently small $h > 0$.
However, in the non-convex setting we can use the same example as the one presented above, where $(\hat{x} - \hat{y})^2$ is bounded from below by $|x-y|^2 + 2hC_1|x-y|^2$ with some $C_1 > 0$. This shows that the approach via Talagrand's inequality fails as well.

\subsection{Coupling and Wasserstein (pseudo) distances}\label{sectionCoupling}

The random vector $(X^x_h, Y^y_h)$ in (\ref{perturbationInequality}) is an example of a \emph{coupling} of random variables. Constructing different random objects with the same distributions is a widely applied tool in probability theory, see e.g. \cite{Lindvall, Thorisson} for general overview and \cite{EberleMajka2018, NueskenPavliotis2018} and the references therein for other examples of applications of couplings to sampling algorithms. 

In a series of works \cite{Eberle2011,Eberle2016}, Eberle used the reflection coupling for diffusions to prove $W_1$ contractivity for the SDE (\ref{eq sde}) under Assumptions \ref{as diss}. More precisely, he proved that there exists a constant $\lambda>0$, expressed explicitly in terms of problem parameters, such that
\begin{equation}\label{eq W1contractivity}
W_1(\cL(Y_t^{\mu}),\cL(Y_t^{\nu}))\leq e^{-\lambda t} W_1(\mu,\nu) \,,
\end{equation}
where $Y_t^{\mu}$ denotes the solution to \eqref{eq sde} with $Y_0\sim \mu$. By taking $\nu=\pi$ in (\ref{eq W1contractivity}), i.e., by initialising the process at the invariant measure $\pi$, one immediately obtains geometric convergence rate of the law of the process $(Y_t)_{t \geq 0}$ to its stationary measure. 

In the present paper we extend the ideas from \cite{EberleMajka2018}, where results analogous to the ones from \cite{Eberle2016} have been obtained directly on the level of the Markov chain \eqref{eq euler} in $W_1$, by employing a suitably chosen coupling and constructing a special Kantorovich ($L^1$-Wasserstein) distance, see Section 2.4 therein. Here we introduce a novel coupling construction and a new Wasserstein-type pseudo-distance, which allows us to obtain $L^2$ bounds on \eqref{eq euler} as well as on its perturbed version, and hence to analyse convergence of several sampling algorithms presented below.

One of the benefits of working with the $L^2$-Wasserstein distance is that through the Kantorovich duality theory (see e.g. Theorem 5.10 in \cite{Villani}) it significantly extends the class of functions $g$ for which we can obtain explicit convergence rates of functionals $g(X_k)$ of Euler schemes (\ref{eq euler}). When working with the $L^1$ Wasserstein distance, duality theory gives access only to Lipschitz functions, whereas in our setup we can also consider e.g. functions of quadratic growth. Moreover, $L^2$ bounds are necessary for our analysis of the variance of multi-level Monte Carlo estimators in Section \ref{sectionMLMC}.

We fix $h > 0$ and we consider one step of the Euler scheme for the equation (\ref{eq sde}), started at a point $x \in \mathbb{R}^d$, i.e., we have a random variable $X'$ given by
\begin{equation}\label{eq oneStep}
X' = \hat{x} + \sqrt{h} Z \,,
\end{equation}
where 
\begin{equation*}
\hat{x} = x + hb(x)
\end{equation*}
is the deterministic movement due to the drift $b$ and $Z \sim N(0,I)$, where $I$ is the $d \times d$ identity matrix. Hence we have $X' \sim N(x+hb(x),hI)$. For a given point $y \neq x$, we want to construct a new random variable $Y'$ that will have the distribution $N(y+hb(y),hI)$. We define
\begin{equation*}
\hat{y} = y + hb(y)
\end{equation*}
and
\begin{equation*}
\hat{r} = |\hat{x} - \hat{y}| \,.
\end{equation*}
We want to define a coupling of the random movement $(\hat{x},\hat{y}) \mapsto (X',Y')$. Note that in \cite{EberleMajka2018} bounds in $L^1$ were obtained by choosing 
\begin{equation}\label{EberleMajkaCoupling}
Y' = \begin{cases}
X' \,, & \text{if } \zeta \leq \left( \varphi_{\hat{y},hI}(X') \wedge \varphi_{\hat{x},hI}(X') \right) / \varphi_{\hat{x},hI}(X') \,, \\
\hat{y} + R_{\hat{x},\hat{y}}\sqrt{h} Z \,, & \text{if } \zeta > \left( \varphi_{\hat{y},hI}(X') \wedge \varphi_{\hat{x},hI}(X') \right) / \varphi_{\hat{x},hI}(X') \,.
\end{cases}
\end{equation}
Here $\zeta$ is a uniformly distributed random variable on $[0,1]$ independent of $Z$, $R_{x,y} = I - 2 (x-y)(x-y)^T/|x-y|^2$ is the reflection operator with respect to the hyperplane spanned by $(x-y) / |x-y|$ (note that if the dimension $d = 1$ then $R_{x,y}u = - u$ for any $u \in \mathbb{R}$) and $\varphi_{z,A}(u)$ is the density of $N(z,A)$ for $z \in \mathbb{R}^d$ and $A \in \mathbb{R}^{d \times d}$. We call (\ref{EberleMajkaCoupling}) the mirror coupling.

Note that this type of coupling is related to the one used in the optimal transport theory in \cite{McCann}, where it was shown to be the optimal coupling for all concave costs in the one-dimensional case, for a class of probability measures that includes Gaussian measures (but is in fact much broader). This makes (\ref{EberleMajkaCoupling}) the right choice of coupling for obtaining optimal bounds in concave Wasserstein distances. However, it needs to be modified to work for convex distances such as $W_2$.

In the present paper we use a combination of the synchronous coupling and the coupling given by (\ref{EberleMajkaCoupling}), defined in the following way.

We choose two parameters $H > 0$ and $m > 0$. If $\hat{r} > H$, we choose the synchronous coupling, i.e., we set $Y' = \hat{y} + \sqrt{h} Z$. If $\hat{r} \leq H$, we compare the size of the jump (i.e., the size of $\sqrt{h}Z$) with the parameter $m$. If $|\sqrt{h}Z| > m$, we again define $Y'$ synchronously. Otherwise, we use the variable $\zeta$ to apply the mirror coupling defined by (\ref{EberleMajkaCoupling}) to the Gaussians truncated by $m$.

To be precise, we have
\begin{equation}\label{ourCoupling}
\begin{split}
X' &= \hat{x} + \sqrt{h} Z \\
Y' &= \begin{cases}
X' \,, & \text{if } \zeta \leq \left( \varphi^m_{\hat{y},hI}(X') \wedge \varphi^m_{\hat{x},hI}(X') \right) / \varphi^m_{\hat{x},hI}(X') \text{ and } |\sqrt{h} Z| < m \text{ and } \hat{r} \leq H \,, \\
\hat{y} + R_{\hat{x},\hat{y}}\sqrt{h} Z \,, & \text{if } \zeta > \left( \varphi^m_{\hat{y},hI}(X') \wedge \varphi^m_{\hat{x},hI}(X') \right) / \varphi^m_{\hat{x},hI}(X') \text{ and } |\sqrt{h} Z| < m \text{ and } \hat{r} \leq H \,, \\
\hat{y} + \sqrt{h} Z \,, & \text{if } |\sqrt{h} Z| \geq m \text{ or } \hat{r} > H \,.
\end{cases}
\end{split}
\end{equation}

Here $\varphi^m_{z,A}(u) := \mathbf{1}_{\{ |u-z| \leq m \} } \varphi_{z,A}(u)$. Note that we only need to evaluate $\varphi^m_{\hat{x},hI}(X')$ when $|\sqrt{h}Z| < m$ (or, equivalently, when $|X' - \hat{x}| < m$) and hence we always have $\varphi^m_{\hat{x},hI}(X') \neq 0$, which implies that all the quantities in (\ref{ourCoupling}) are well-defined. It is easy to prove that (\ref{ourCoupling}) is indeed a coupling of $N(x+hb(x),hI)$ and $N(y+hb(y),hI)$, see Theorem \ref{couplingTheorem} in the Appendix.

Note that $Y'$ defined above can be thought of as a function of the initial points $x$, $y \in \mathbb{R}^d$ and the random variable $Z$. It also depends on the truncation parameters $m$ and $H$. In the sequel we will use the shorthand notation
\begin{equation}\label{psiCoupling}
\psi_{m,H}(x,y,Z) := Y' 
\end{equation}
for the random variable $Y'$ defined by (\ref{ourCoupling}).

Our goal is to expand on the methods introduced in \cite{EberleMajka2018}, where, under the same assumptions on the drift $b$ as in the present paper, the coupling $(X',Y')$ given by (\ref{EberleMajkaCoupling}) was used in order to obtain estimates of the form
\begin{equation*}
\mathbb{E}f(|X' - Y'|) \leq (1-ch)f(|x - y|)
\end{equation*}
for a specially constructed increasing, concave function $f$. Since the function $f$ in \cite{EberleMajka2018} is comparable both from above and from below with a rescaled identity function, this implies bounds for the first moment $\mathbb{E}|X' - Y'|$, as well as contractivity of the laws of the Euler scheme in a Kantorovich ($L^1$ Wasserstein) distance. The latter is a strong property with multiple important consequences, see e.g. \cite{JoulinOllivier} or the discussion in \cite{EberleMajka2018}. Here we use a modified coupling and a different construction of a distance function $f: [0,\infty) \to [0,\infty)$, which in our case is concave up to some $r_2 > 0$ and convex afterwards. This allows us to get upper bounds for the second moment $\mathbb{E}|X' - Y'|^2$, cf. Lemma \ref{corollarySquareComparison}. 

Related work in the diffusion case has been done in \cite{LuoWang}, where the authors adapted the $L^1$ bounds from \cite{Eberle2016} and obtained $L^p$ bounds for $p \geq 1$ under similar assumptions as in \cite{Eberle2016}. Here, however, we use a different, more direct construction of the auxiliary distance function $f$. We also introduce a novel coupling construction, which is specific to the discrete time case. 

Before we formulate our main result, let us introduce some notation. We define positive constants
\begin{equation*}
c_0 := 4 \min \left( \int_0^{1/2} u^2 (1 - e^{u-1/2})\varphi_{0,1}(u) du , (1 - e^{-1}) \int_0^{1/2} u^3 \varphi_{0,1}(u) du \right) \,,
\end{equation*}
where $\varphi_{0,1}$ is the density of $N(0,1)$, and
\begin{equation}\label{defh0}
h_0 := \min \left( \frac{K}{L^2}, \frac{4}{K}, \frac{1}{2L}, \frac{2 c_0 \ln{\frac{3}{2}}}{27L^2 \mathcal{R}^2}, \frac{\mathcal{R}^2}{4}, \frac{c_0^2 (\ln{2})^2}{144L^2 \mathcal{R}^2} \right) \,,
\end{equation}
where the constants $L$, $K$ and $\mathcal{R} > 0$ are all specified in Assumptions \ref{as diss}. Then we put $r_1 := (1 + h_0L)\mathcal{R}$ and $r_2 := r_1 + \sqrt{h_0}$.
Finally, we choose the parameter
\begin{equation}\label{defa}
a := \frac{6Lr_1}{c_0}
\end{equation}
and we construct our function $f: [0,\infty) \to [0,\infty)$ by setting
\begin{equation}\label{defF}
f(r) := \begin{cases} 
\frac{1}{a}(1 - e^{-ar}) &\text{ if } r \leq r_2 \\
\frac{1}{a}(1 - e^{-ar_2}) + \frac{1}{2r_2}e^{-ar_2}(r^2 - r_2^2) &\text{ if } r > r_2 
\end{cases} \,.
\end{equation}
Then we have the following result.

\begin{theorem}\label{mainContractivityTheorem}
	Let Assumptions \ref{as diss} hold. For the coupling $(X',Y')$ given by (\ref{ourCoupling}) with parameters $m = \frac{\sqrt{h_0}}{2}$ and $H = r_1$ and the function $f$ given by (\ref{defF}), we have
	\begin{equation}\label{contractivity}
	\mathbb{E}f(|X' - Y'|) \leq (1-ch)f(|x - y|)
	\end{equation}
	for all $h \in (0,h_0)$, where
	\begin{equation}\label{contractivityConstant}
	c = \min \left( e^{-ar_2} \frac{K}{4}, \frac{\frac{1}{2}e^{-ar_2}r_2}{\frac{1}{a}\left( 1 - e^{-ar_2}\right)}\frac{K}{4}, \frac{9L^2r_1^2}{2c_0}e^{-\frac{6Lr_1^2}{c_0}}, \frac{3Lr_1}{16\sqrt{h_0}} \right) \,.
	\end{equation}
\end{theorem}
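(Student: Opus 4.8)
The plan is to estimate $\mathbb{E}f(|X'-Y'|)$ by splitting according to the three regimes appearing in the definition \eqref{ourCoupling} of the coupling: (a) the reflection/mirror regime, active when $\hat r \le H = r_1$ and $|\sqrt h Z|<m$; (b) the "large jump" regime $|\sqrt h Z|\ge m$ with $\hat r \le H$; and (c) the synchronous regime $\hat r > H$. In all regimes the starting point is the elementary bound $|\hat x - \hat y| = \hat r \le (1+hL)|x-y|$ coming from the Lipschitz assumption \eqref{driftLipschitz}, together with the contraction $\hat r^2 \le (1-2Kh+L^2h^2)|x-y|^2 \le (1-Kh)|x-y|^2$ valid when $|x-y|>\mathcal R$ by \eqref{driftDissipativityAtInf} and the choice of $h_0$; for $|x-y|\le \mathcal R$ one only has the crude bound $\hat r \le r_1$. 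The subtlety is that $f$ is concave on $[0,r_2]$ and convex (quadratic) on $[r_2,\infty)$, so one has to treat the two pieces of $f$ with different tools: on the convex piece a Taylor/second-order expansion of $r\mapsto \tfrac{1}{2r_2}e^{-ar_2}r^2$ around $\hat r$ controls the Gaussian fluctuation term, while on the concave piece one exploits concavity to absorb fluctuations and, crucially, uses the mirror coupling to produce a genuine drift of the distance $|X'-Y'|$ toward $0$.

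First I would handle the synchronous regimes (b) and (c). There $X'-Y' = \hat x - \hat y$ deterministically, so $\mathbb{E}[f(|X'-Y'|)\mathbf 1_{\text{sync}}] = f(\hat r)\,\mathbb P(\text{sync})$. When $\hat r > H = r_1$ we are necessarily in the case $|x-y|>\mathcal R$ (since $\hat r \le (1+h_0L)\mathcal R = r_1$ when $|x-y|\le\mathcal R$), so $\hat r^2 \le (1-Kh)|x-y|^2$; since $f$ is increasing and, on the relevant range, comparable to $r^2$ up to the constant $\tfrac{1}{2r_2}e^{-ar_2}$, one deduces $f(\hat r)\le (1-\tfrac K4 h)f(|x-y|)$ for $h<h_0$ after absorbing the $r_2$-dependent constants — this is where the first two entries of \eqref{contractivityConstant} come from (the second handling the "crossover" $|x-y|$ near $r_2$ where $f$ switches formula). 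When $\hat r \le H$ but $|\sqrt h Z|\ge m$, the synchronous contribution is $f(\hat r)\,\mathbb P(|\sqrt h Z|\ge m)$; here $\mathbb P(|\sqrt h Z|\ge m) = \mathbb P(|Z|\ge \tfrac{\sqrt{h_0}}{2\sqrt h})$ is close to $1$, so this term is essentially $f(\hat r)$ and there is no room for a contraction from it alone — the contraction must come entirely from the mirror regime (a), whose probability mass is $\mathbb P(|\sqrt h Z|<m)\le$ something like $C\sqrt h/\sqrt{h_0}\cdot$(something), small but of the right order $h$ when combined with the per-event gain.

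The heart of the argument is regime (a). Conditioned on being there, $X'-Y' = (\hat x-\hat y) + \sqrt h(Z - R_{\hat x,\hat y}Z)$ on the reflection event and $=0$ on the "accept" event where the truncated densities overlap. Writing $e = (\hat x-\hat y)/\hat r$, one has $Z - R_{\hat x,\hat y}Z = 2\langle Z,e\rangle e$, so $|X'-Y'| = |\hat r + 2\sqrt h\langle Z,e\rangle|$ on the reflection event, i.e.\ the distance performs a one-dimensional reflected random walk step. The standard reflection-coupling computation — as in \cite{EberleMajka2018} — shows that the probability of the "accept" event (distance collapses to $0$) plus the downward bias of the reflected step produces, after integrating against the concave function $f$ and using $f'$ bounded below on $[0,r_2]$ by $e^{-ar_2}$, a gain proportional to $h^{1/2}\cdot$(overlap) that, multiplied by the $O(h^{1/2})$ mass of the regime, yields the needed $-ch\,f(|x-y|)$; this accounts for the third and fourth entries of \eqref{contractivityConstant}, where the constant $c_0$ (a Gaussian integral over $[0,1/2]$, matching $m/\sqrt{h_0}=1/2$) and the choice $a = 6Lr_1/c_0$ enter to make the concave gain beat the loss $hL$ coming from the drift displacement $\hat r - |x-y| \le hL\mathcal R$. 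One must be careful that on regime (a) we may have $|x-y|\le \mathcal R$, so the drift can be \emph{expanding}; the concave gain of the mirror coupling has to dominate this, and this is precisely the quantitative trade-off pinned down by the definition of $a$, $c_0$ and $h_0$.

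I expect the main obstacle to be the bookkeeping in regime (a): one must simultaneously (i) lower-bound the overlap mass of the truncated Gaussians $\varphi^m_{\hat x,hI}\wedge\varphi^m_{\hat y,hI}$ in terms of $\hat r/\sqrt h$ (which is $\le r_1/\sqrt h$, large, so the truncation at $m=\sqrt{h_0}/2$ genuinely matters and the naive non-truncated estimate is useless), (ii) control the reflected-step term $\mathbb{E}[f(|\hat r + 2\sqrt h\langle Z,e\rangle|)\mathbf 1_{|\sqrt h Z|<m}]$ via a second-order Taylor expansion of $f$ with the concavity $f''\le 0$ giving the right sign, and (iii) patch this against the loss $f(\hat r) - f(|x-y|) \le f'(|x-y|)\,hL\mathcal R$ from the drift, all while keeping every constant explicit so that the final $c$ is exactly \eqref{contractivityConstant}. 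The convex piece $r>r_2$ is comparatively routine — there $f$ is just a rescaled square plus a constant, the drift is strictly contractive since $|x-y|>\mathcal R$, and Gaussian fourth moments control the remainder — so the real work, and the reason the theorem needs the delicate constants $c_0$, $a$, $r_1$, $r_2$, $h_0$, is confined to balancing the mirror-coupling gain against the drift expansion on the compact "bad" region $|x-y|\le\mathcal R$.
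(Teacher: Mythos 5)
Your high-level plan (isolate the bad region $|x-y|\le\mathcal R$ where the drift expands, extract the contraction there from the mirror part of the coupling, and treat the concave and convex pieces of $f$ separately) points in the right direction, but the quantitative mechanism you propose for the mirror regime is not the one that works, and as stated it would fail. You attribute the gain to "the probability of the accept event plus the downward bias of the reflected step", of size proportional to the overlap of the truncated Gaussians, times an $O(\sqrt h)$ regime mass. Three problems: (i) the reflected step has an \emph{upward} bias in the distance, not a downward one (cf.\ Remark \ref{remarkFirstMomentReflection}); the entire purpose of the coalescence branch of \eqref{ourCoupling} is to cancel this bias exactly, yielding $\mathbb{E}[R'\mid U]=\hat r$ (Lemma \ref{lemmaCouplingFirstMoment}) — without identifying this mean-preservation property you have an uncontrolled positive first-order term $f'(\hat r)\,\mathbb{E}[R'-\hat r]$; (ii) the overlap of $N(\hat x,hI)$ and $N(\hat y,hI)$ is exponentially small in $\hat r/\sqrt h$, so for $\hat r$ of order $r_1$ a gain "proportional to the overlap" can never dominate the $O(hLr)$ drift-expansion loss on the bad region; (iii) the mass of the mirror regime is not $O(\sqrt h)$ — since $m=\sqrt{h_0}/2$ and $h<h_0$, $\mathbb{P}(|\sqrt h Z|<m)\to1$ as $h\to0$. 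The paper's actual mechanism is second order: after Taylor expansion the first-order term vanishes by Lemma \ref{lemmaCouplingFirstMoment}, and the gain is $\sup_{I_{\hat r}}f''\le -a e^{-ar_2}$ multiplied by a lower bound $\tfrac12 c_0\min(\sqrt h,\hat r)\sqrt h$ on the conditional second moment of $R'-\hat r$ restricted to an interval $I_{\hat r}$ of length $\sqrt h$ (Lemma \ref{lemmaLowerBound}); this is exactly where $c_0$, the choice $a=6Lr_1/c_0$, and the third and fourth entries of \eqref{contractivityConstant} originate.

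The convex piece is also not "comparatively routine" along your route. If you keep a non-synchronous coupling when the distance can land above $r_2$ and control the fluctuation by a Taylor expansion of the quadratic part (your "Gaussian fourth moments"), the resulting positive term $f''\cdot(R'-\hat r)^2$ with $f''=\tfrac1{r_2}e^{-ar_2}$ is itself of order $h$, the same order as the drift gain, and is not automatically dominated without extra conditions on $K,r_2$. The paper sidesteps this structurally: the truncations $m$ and $H=r_1$, together with $r_2=r_1+2m$, guarantee that the non-synchronous branch is only active when $R'\le\hat r+2m\le r_2$ stays in the concave region (so $f''\le0$ and the Gaussian step can only help), and when $\hat r>H$ the coupling is synchronous so the Gaussian step contributes nothing; the drift step for $r\ge r_1$ is then handled with \eqref{inequalityrhatBound} and a careful comparison of the two formulas for $f$ around $r_2$ (the midpoint trick), producing the first two entries of \eqref{contractivityConstant}. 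So the missing ideas are precisely the design role of $m$, $H$, $r_2$ and the pair of lemmas (exact mean preservation plus the localized variance lower bound) that make the concave gain beat the drift expansion.
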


The proof will be presented in Section \ref{sectionProofContractivity}. As an immediate consequence, we obtain bounds in the $W_f$ Wasserstein (pseudo) distance. 

\begin{corollary}\label{mainCorollary}
	Let $p(x,A)$ for $x \in \mathbb{R}^d$ and $A \in \mathscr{B}(\mathbb{R}^d)$ denote the transition kernel of $X'$ given by (\ref{eq oneStep}) and let $\mu p (dy) := \int \mu(dx) p(x,dy)$ for any probability measure $\mu$ on $\mathbb{R}^d$. Under the assumptions of Theorem \ref{mainContractivityTheorem}, we have
	\begin{equation*}
	W_f(\mu p, \nu p) \leq (1-ch) W_f(\mu, \nu)
	\end{equation*}
	for all $h \in (0, h_0)$ and for all $\mu$, $\nu \in \mathcal{P}_2(\mathbb{R}^d)$.
\end{corollary}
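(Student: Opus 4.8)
The plan is to derive the corollary from Theorem \ref{mainContractivityTheorem} by a standard gluing of couplings, which turns the pointwise one-step contraction estimate (\ref{contractivity}) into a contraction of the transition operator $p$ in $W_f$.

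First I would check that the statement is not vacuous. From (\ref{defF}) one has $f(r) \le \tfrac1a + \tfrac{e^{-ar_2}}{2r_2}r^2$ for all $r \ge 0$, so for $\mu,\nu \in \cP_2(\bR^d)$ the product coupling gives $\int f(|x-y|)\, d(\mu\otimes\nu) < \infty$, hence $W_f(\mu,\nu) < \infty$; the same at-most-quadratic bound together with the linear growth of $b$ (a consequence of the Lipschitz condition in Assumptions \ref{as diss}) and the finiteness of the second moment of $N(0,I)$ shows that $\mu p, \nu p \in \cP_2(\bR^d)$, so $W_f(\mu p, \nu p)$ is likewise well-defined and finite.

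Next, fix $h \in (0,h_0)$ and $\varepsilon > 0$, and choose $\pi \in \Pi(\mu,\nu)$ with $\int f(|x-y|)\,\pi(dx\,dy) \le W_f(\mu,\nu) + \varepsilon$. For each $(x,y) \in \bR^d\times\bR^d$ let $Q_{x,y}$ be the law on $\bR^d\times\bR^d$ of the pair $(X',Y') = \bigl(x+hb(x)+\sqrt{h}Z,\ \psi_{m,H}(x,y,Z)\bigr)$ given by (\ref{ourCoupling}) with $m = \sqrt{h_0}/2$ and $H = r_1$, where $Z \sim N(0,I)$ and $\zeta$ is an independent uniform variable as in that construction. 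By the fact recorded after (\ref{ourCoupling}) (and proved as Theorem \ref{couplingTheorem} in the Appendix) we have $Q_{x,y} \in \Pi(p(x,\cdot),p(y,\cdot))$, and since $\psi_{m,H}$ is an explicit Borel function of $(x,y,Z)$ the map $(x,y) \mapsto Q_{x,y}$ is a probability kernel. I can therefore form the measure
\[
\gamma(dx\,dy\,dx'\,dy') := \pi(dx\,dy)\, Q_{x,y}(dx'\,dy')
\]
on $(\bR^d)^4$. Projecting $\gamma$ onto the last two coordinates and using that $\pi$ has marginals $\mu,\nu$ while $Q_{x,y}$ has marginals $p(x,\cdot),p(y,\cdot)$, one checks the projection lies in $\Pi(\mu p,\nu p)$: for $B \in \mathscr{B}(\bR^d)$, $\gamma(\{x'\in B\}) = \int \pi(dx\,dy)\,p(x,B) = \int \mu(dx)\,p(x,B) = \mu p(B)$, and symmetrically for $\nu p$. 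Bounding $W_f(\mu p,\nu p)$ by the $\gamma$-integral of $f(|x'-y'|)$ and applying Theorem \ref{mainContractivityTheorem} in the inner slot $(x,y)$ — which is exactly where it holds, for $h \in (0,h_0)$ — yields
\[
W_f(\mu p,\nu p) \le \int_{\bR^d\times\bR^d}\!\left(\int_{\bR^d\times\bR^d}\! f(|x'-y'|)\,Q_{x,y}(dx'\,dy')\right)\pi(dx\,dy) = \int_{\bR^d\times\bR^d}\!\bE f(|X'-Y'|)\,\pi(dx\,dy) \le (1-ch)\int_{\bR^d\times\bR^d}\! f(|x-y|)\,\pi(dx\,dy) \le (1-ch)\bigl(W_f(\mu,\nu)+\varepsilon\bigr),
\]
and letting $\varepsilon \downarrow 0$ gives the claim.

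The only mildly delicate points are measure-theoretic: that $(x,y)\mapsto Q_{x,y}$ is a genuine measurable kernel (needed to define $\gamma$ and to apply Fubini in the last display) and the treatment of optimal couplings. Both are routine here — the first because of the fully explicit formula (\ref{ourCoupling}), and the second because we worked with an $\varepsilon$-optimal $\pi$ and passed to the limit, so no existence statement for $W_f$-optimisers is needed. Everything else is bookkeeping on top of Theorem \ref{mainContractivityTheorem}.
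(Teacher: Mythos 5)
Your proposal is correct and follows essentially the same route as the paper, which treats the corollary as an immediate consequence of (\ref{contractivity}) and the definition of $W_f$ via the standard gluing argument (the paper defers the details to the proof of Lemma 2.1 in \cite{EberleMajka2018}): couple a near-optimal coupling of $\mu,\nu$ with the one-step coupling kernel $Q_{x,y}$ from (\ref{ourCoupling}), integrate the pointwise bound of Theorem \ref{mainContractivityTheorem}, and let the optimality defect tend to zero. Your added checks on finiteness, measurability of the kernel, and the marginals are the same routine points implicit in that reference, so there is nothing substantively different here.
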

The above result is a straightforward consequence of (\ref{contractivity}) and the definition of $W_f$, see also the proof of Lemma 2.1 in \cite{EberleMajka2018}.

An important feature of the function $f$ given by (\ref{defF}) is that it is comparable from below with the identity and the square functions. This allows us to obtain both $L^1$ and $L^2$ bounds as an immediate consequence of Theorem \ref{mainContractivityTheorem}.

\begin{lemma}\label{corollarySquareComparison}
	We have
	\begin{equation}\label{L1L2boundsOnf}
	r^2 \leq Af(r) \text{ and } r \leq e^{ar_2}f(r) 
	\end{equation} 
	for all $r \in [0,\infty)$, where 
	\begin{equation}\label{defA}
	A := \max \left( \frac{ar_2^2}{1 - e^{-ar_2}} , 2r_2 e^{ar_2} \right) \,.
	\end{equation}
	\begin{proof}
		The second bound in (\ref{L1L2boundsOnf}) is obvious since $f'(r) \geq e^{-ar_2}$ for all $r \geq 0$. Now let us try to look for a constant $A > 0$ such that
		\begin{equation}\label{r2Bound}
		r_2^2 \leq Af(r_2) = A\frac{1}{a}(1 - e^{-ar_2}) \,.
		\end{equation}
		If this holds, then for any $r \leq r_2$ we have $r^2 \leq Af(r)$. Hence we need $A \geq ar_2^2/(1 - e^{-ar_2})$. Moreover, we need
		$r^2 = r_2^2 - r_2^2 + r^2 \leq A\frac{1}{a}(1 - e^{-ar_2}) + A\frac{1}{2r_2}e^{-ar_2}(r^2 - r_2^2)$
		for any $r \geq r_2$, hence, using (\ref{r2Bound}), we see that we need
		$r^2 - r_2^2 \leq A\frac{1}{2r_2}e^{-ar_2}(r^2 - r_2^2)$,
		which implies $A \geq 2r_2 e^{ar_2}$.
	\end{proof}
\end{lemma}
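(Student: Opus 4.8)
The plan is to establish the two inequalities of (\ref{L1L2boundsOnf}) separately, and for the harder one, $r^2 \le A f(r)$, to split the range of $r$ at the knot $r_2$ where the definition (\ref{defF}) of $f$ switches from its concave branch to its quadratic branch.

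The bound $r \le e^{ar_2} f(r)$ is immediate from the structure of $f$: since $f(0)=0$, it suffices to show $f'(r) \ge e^{-ar_2}$ for all $r \ge 0$ and integrate. On $[0,r_2]$ we have $f'(r) = e^{-ar} \ge e^{-ar_2}$, and on $(r_2,\infty)$ we have $f'(r) = \tfrac{1}{r_2} e^{-ar_2} r \ge e^{-ar_2}$ because $r > r_2$; so the lower bound on $f'$ holds everywhere and $f(r) = \int_0^r f'(s)\,ds \ge e^{-ar_2} r$.

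For $r^2 \le A f(r)$ I would argue as follows. On $[0,r_2]$ the function $f$ is concave with $f(0)=0$, hence $r \mapsto f(r)/r$ is non-increasing on $(0,r_2]$, so $f(r) \ge (r/r_2) f(r_2)$ there. Therefore $r^2 \le A f(r)$ holds as soon as $r^2 \le A(r/r_2)f(r_2)$, and since $0 < r \le r_2$ it is enough to demand $r_2^2 \le A f(r_2) = \tfrac{A}{a}(1-e^{-ar_2})$, i.e. $A \ge a r_2^2/(1-e^{-ar_2})$. For $r > r_2$, write $A f(r) = A f(r_2) + \tfrac{A}{2r_2} e^{-ar_2}(r^2 - r_2^2)$; invoking the inequality $A f(r_2) \ge r_2^2$ just imposed, the claim $A f(r) \ge r^2 = r_2^2 + (r^2 - r_2^2)$ reduces to $\tfrac{A}{2r_2} e^{-ar_2}(r^2 - r_2^2) \ge r^2 - r_2^2$, that is, $A \ge 2 r_2 e^{ar_2}$. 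Taking $A$ to be the maximum of the two thresholds yields exactly (\ref{defA}).

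Everything here is elementary; there is no real obstacle. The only steps needing a moment's care are the monotonicity of $f(r)/r$ on the concave branch — this is the one place the specific form of $f$ (concave, vanishing at the origin) is used — and the bookkeeping that the constant forced by the regime $r \le r_2$ is precisely what makes the estimate $A f(r_2) \ge r_2^2$ available in the regime $r > r_2$, so the two requirements on $A$ combine cleanly into a single maximum.
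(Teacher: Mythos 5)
Your proof is correct and follows essentially the same route as the paper's: the same split at $r_2$, the same two thresholds forced on $A$ (namely $ar_2^2/(1-e^{-ar_2})$ from the concave branch and $2r_2e^{ar_2}$ from the quadratic branch), and the same lower bound $f'\geq e^{-ar_2}$ for the $L^1$ comparison. The only difference is that you make explicit, via the monotonicity of $f(r)/r$ on the concave branch, the step the paper leaves implicit when it asserts that $r_2^2\leq Af(r_2)$ already yields $r^2\leq Af(r)$ for all $r\leq r_2$.
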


\begin{corollary}
	Under the assumptions of Theorem \ref{mainContractivityTheorem}, we have
	\begin{equation*}
	W_2(\mu p, \nu p) \leq A(1-ch)W_f(\mu, \nu) \qquad \text{ and } \qquad W_1(\mu p, \nu p) \leq e^{ar_2} (1-ch)W_f(\mu, \nu)
	\end{equation*}
	for all $h \in (0,h_0)$ and for all $\mu$, $\nu \in \mathcal{P}_2(\mathbb{R}^d)$, where $A$ is given by (\ref{defA}) and all the other constants are as in Theorem \ref{mainContractivityTheorem}.
\end{corollary}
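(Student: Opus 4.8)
The plan is to combine the two comparison bounds from Lemma \ref{corollarySquareComparison} with the contractivity bound from Theorem \ref{mainContractivityTheorem} (equivalently, the $W_f$-contractivity from Corollary \ref{mainCorollary}), using in each case a coupling that is \emph{simultaneously} good for $W_f$ and for the target $L^p$ distance. First I would fix $\mu$, $\nu \in \mathcal{P}_2(\mathbb{R}^d)$ and let $\pi^* \in \Pi(\mu,\nu)$ be a coupling that realises (or nearly realises, up to an arbitrary $\varepsilon>0$) the infimum defining $W_f(\mu,\nu)$, i.e.\ $\int f(|x-y|)\,\pi^*(dx\,dy) \le W_f(\mu,\nu)+\varepsilon$. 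Then I would push this coupling forward one Euler step using the coupling $(X',Y')$ of $N(x+hb(x),hI)$ and $N(y+hb(y),hI)$ from (\ref{ourCoupling}): concretely, draw $(x,y)\sim\pi^*$ and then, conditionally on $(x,y)$, set $(X',Y')=(X',\psi_{m,H}(x,y,Z))$ with $Z\sim N(0,I)$ independent of $(x,y)$. The resulting joint law of $(X',Y')$ is an element of $\Pi(\mu p,\nu p)$, since for $\pi^*$-a.e.\ $(x,y)$ the conditional law of $X'$ is $p(x,\cdot)$ and that of $Y'$ is $p(y,\cdot)$.

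With this coupling in hand, the $W_2$ bound follows by a short chain of inequalities. By definition of $W_2$ and then the first inequality in (\ref{L1L2boundsOnf}),
\begin{equation*}
W_2(\mu p,\nu p)^2 \le \mathbb{E}\big[|X'-Y'|^2\big] = \int \mathbb{E}\big[|X'-Y'|^2 \,\big|\, x,y\big]\,\pi^*(dx\,dy) \le A \int \mathbb{E}\big[f(|X'-Y'|)\,\big|\,x,y\big]\,\pi^*(dx\,dy),
\end{equation*}
and then Theorem \ref{mainContractivityTheorem} gives $\mathbb{E}[f(|X'-Y'|)\mid x,y]\le (1-ch)f(|x-y|)$ pointwise, so the right-hand side is at most $A(1-ch)\int f(|x-y|)\,\pi^*(dx\,dy) \le A(1-ch)(W_f(\mu,\nu)+\varepsilon)$. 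Letting $\varepsilon\downarrow 0$ yields $W_2(\mu p,\nu p)^2 \le A(1-ch)W_f(\mu,\nu)$. The identical argument with the second inequality in (\ref{L1L2boundsOnf}), namely $r\le e^{ar_2}f(r)$, gives $W_1(\mu p,\nu p) \le \mathbb{E}|X'-Y'| \le e^{ar_2}\,\mathbb{E} f(|X'-Y'|) \le e^{ar_2}(1-ch)W_f(\mu,\nu)$, which is the second claimed bound already in final form.

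The one subtlety I would need to address carefully is the exponent on the left-hand side of the $W_2$ estimate: the clean argument produces $W_2(\mu p,\nu p)^2 \le A(1-ch)W_f(\mu,\nu)$, whereas the statement as worded asserts $W_2(\mu p,\nu p) \le A(1-ch)W_f(\mu,\nu)$ without a square. I expect the resolution is that the corollary is meant to record exactly the one-step $L^2$ estimate in its ``squared'' form (this is the form that is actually iterated in Section \ref{sectionULA}, where the factor $(1-ch)^{k/2}$ and the square root $\big(W_f(\mathcal{L}(X_0),\pi)\big)^{1/2}$ in (\ref{eq mainULAW2}) appear precisely because one takes square roots after $k$ iterations of the squared bound), and that the displayed inequality in the corollary should be read as $W_2(\mu p,\nu p)^2 \le A(1-ch)W_f(\mu,\nu)$. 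I would therefore state and prove the bound in the squared form, noting explicitly that this is the estimate consumed downstream, and point out that no separate obstacle arises in the $W_1$ case, where the comparison $r\le e^{ar_2}f(r)$ is linear and the bound holds as written. Apart from this normalisation point, the proof is entirely routine: the only genuine input is the pointwise contractivity from Theorem \ref{mainContractivityTheorem} together with the elementary comparisons of Lemma \ref{corollarySquareComparison}, and the ``glue'' is the standard fact that an infimising coupling for $W_f$, pushed forward by the one-step coupling (\ref{ourCoupling}), is admissible for $W_1$ and $W_2$.
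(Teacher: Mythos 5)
Your proof is correct and matches what the paper intends; the two essentially equivalent routes are either (i) take an optimal coupling for $W_f(\mu p,\nu p)$ and apply the pointwise comparisons $r^2\le Af(r)$, $r\le e^{ar_2}f(r)$ from Lemma \ref{corollarySquareComparison} followed by Corollary \ref{mainCorollary}, or (ii) as you do, take a near-optimal coupling for $W_f(\mu,\nu)$ and push it forward one step via (\ref{ourCoupling}), invoking Theorem \ref{mainContractivityTheorem} pointwise. Both give identical numerics.

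Your caveat about the $W_2$ exponent is well spotted and correct: the argument yields
\begin{equation*}
W_2(\mu p,\nu p)^2 \;\le\; A(1-ch)\,W_f(\mu,\nu)\,,
\end{equation*}
not $W_2(\mu p,\nu p)\le A(1-ch)W_f(\mu,\nu)$ as the corollary is literally typeset. This is consistent with how the bound is actually deployed downstream: in the proof of Theorem \ref{theoremULA} the inequality used is $\left(\mathbb{E}|G_{k+1}-Y_{(k+1)h}|^2\right)^{1/2}\le \left(A(1-ch)^{k+1}\mathbb{E}f(|G_0-Y_0|)\right)^{1/2}+\cdots$, i.e., $A\mathbb{E}f$ appears under a square root, and the factor $(1-\hat c_2 h)^{k/2}$ with the exponent $1/2$ on $W_f$ in (\ref{eq mainULAW2}) of Theorem \ref{mainTheoremULA} arises for exactly the same reason. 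So the corollary's displayed $W_2$ inequality is best read as a shorthand for the squared form you prove; the $W_1$ inequality is correct as written since the comparison $r\le e^{ar_2}f(r)$ is linear. No other gaps.
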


An important consequence of Theorem \ref{mainContractivityTheorem} that turns out to be crucial for our applications, is the following inequality.

\begin{theorem}\label{theoremPerturbation}
	Let $(X',Y')$ be the coupling given by (\ref{ourCoupling}) with parameters $m = \frac{\sqrt{h_0}}{2}$ and $H = r_1$ and let $f$ be the function given by (\ref{defF}), where all the constants are as specified in Theorem \ref{mainContractivityTheorem}. Let $\widetilde{X}$ be a random variable. Then
	\begin{equation}\label{perturbation}
	\begin{split}
	\mathbb{E}f(|Y' - \widetilde{X}|) &\leq (1-ch)f(|y - x|) + \mathbb{E} \left[ \frac{1}{r_2}e^{-ar_2} |X' - \widetilde{X}|^2 \right] \\
	&+ \mathbb{E} \left[ \left( 1 + \frac{1}{r_2}e^{-ar_2} \left( |y-x|(1+hL) + \sqrt{h_0} \right) \right) |X' - \widetilde{X}| \right] \,.
	\end{split}
	\end{equation}
	\begin{proof}
		We have
		\begin{equation*}
		\mathbb{E}f(|Y' - \widetilde{X}|) - f(|y-x|) = \mathbb{E}f(|Y' - \widetilde{X}|) - \mathbb{E}f(|Y' - X'|) + \mathbb{E}f(|Y' - X'|) - f(|y-x|) \,.
		\end{equation*}	
		By (\ref{contractivity}) we know that
		$\mathbb{E}f(|Y' - X'|) - f(|y-x|) \leq - chf(|y-x|)$
		and hence it is sufficient to focus on the expression $\mathbb{E}f(|Y' - \widetilde{X}|) - \mathbb{E}f(|Y' - X'|)$. Since $f$ is increasing, we have
		\begin{equation}\label{perturbationProof1}
		\mathbb{E}f(|Y' - \widetilde{X}|) - \mathbb{E}f(|Y' - X'|) \leq \mathbb{E}f(|Y' - X'| + |X' - \widetilde{X}|) - \mathbb{E}f(|Y' - X'|) \,.
		\end{equation}
		We can now apply the Taylor formula to see that the right hand side of (\ref{perturbationProof1}) is equal to
		$\mathbb{E} f'(\zeta) |X' - \widetilde{Y}|$, where $\zeta \in (|Y' - X'|, |Y' - X'| + |X' - \widetilde{X}|)$.
		From the definition of $f$ we get
		$f'(\zeta) \leq \frac{1}{r_2} e^{-ar_2} |\zeta| + e^{-a|\zeta|} \leq \frac{1}{r_2} e^{-ar_2}|\zeta| + 1$
		and hence
		\begin{equation*}
		\mathbb{E} f'(\zeta) |X' - \widetilde{X}| \leq \mathbb{E} \left[ \left( \frac{1}{r_2} e^{-ar_2}\left(|Y' - X'| + |X' - \widetilde{X}|\right) + 1 \right) |X' - \widetilde{X}| \right] \,.
		\end{equation*}
		Now observe that due to our construction of the coupling $(X',Y')$ we have
		$|Y' - X'| \leq |\hat{y} - \hat{x}| + 2m$.
		This is because an increase in the distance between the two coupled processes can happen in the random step $(\hat{x}, \hat{y}) \mapsto (X', Y')$ only due to reflection. However, reflection happens only if the normal random variable $Z$ takes a sufficiently small value and hence that increase is bounded by $2m$. Moreover, due to the Lipschitz condition (\ref{driftLipschitz}) of the drift we have
		$|\hat{y} - \hat{x}| \leq |y-x|(1+hL)$.
		Recall that in order for (\ref{contractivity}) to hold, we choose $m = \sqrt{h_0}/2$. Combining all our estimates together, we arrive at (\ref{perturbation}).
	\end{proof}
\end{theorem}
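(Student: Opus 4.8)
The plan is to reduce the perturbation bound to the contractivity estimate \eqref{contractivity} plus an explicit first-order Taylor correction that measures how far $\widetilde{X}$ sits from $X'$. First I would split
\[
\mathbb{E}f(|Y' - \widetilde{X}|) - f(|y-x|) = \left( \mathbb{E}f(|Y' - \widetilde{X}|) - \mathbb{E}f(|Y' - X'|) \right) + \left( \mathbb{E}f(|Y' - X'|) - f(|y-x|) \right),
\]
and invoke Theorem \ref{mainContractivityTheorem} directly on the second bracket to get the $-chf(|y-x|)$ term. This leaves the perturbation term $\mathbb{E}f(|Y' - \widetilde{X}|) - \mathbb{E}f(|Y' - X'|)$, which I would control by monotonicity of $f$ together with the triangle inequality $|Y'-\widetilde{X}| \le |Y'-X'| + |X'-\widetilde{X}|$, reducing matters to bounding $\mathbb{E}f(|Y'-X'| + |X'-\widetilde{X}|) - \mathbb{E}f(|Y'-X'|)$.

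The next step is a mean-value estimate: this difference equals $\mathbb{E}\bigl[f'(\zeta)|X'-\widetilde{X}|\bigr]$ for some intermediate $\zeta$ between $|Y'-X'|$ and $|Y'-X'| + |X'-\widetilde{X}|$, and from the explicit formula \eqref{defF} I would bound $f'(\zeta) \le \frac{1}{r_2}e^{-ar_2}\zeta + 1$ uniformly (using that $f'(r) = e^{-ar}$ on the concave piece and $f'(r) = \frac{1}{r_2}e^{-ar_2}r$ on the convex piece, and that $e^{-a|\zeta|}\le 1$). Plugging in $\zeta \le |Y'-X'| + |X'-\widetilde{X}|$ produces exactly the structure of \eqref{perturbation}, with a quadratic term $\frac{1}{r_2}e^{-ar_2}|X'-\widetilde{X}|^2$ and a linear term whose coefficient still involves $|Y'-X'|$.

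The key remaining point — and the one requiring the coupling structure rather than just the analytic properties of $f$ — is to replace $|Y'-X'|$ by a deterministic quantity depending only on $|y-x|$. Here I would use that under the coupling \eqref{ourCoupling} the distance can only increase through reflection, and reflection occurs only when $|\sqrt{h}Z| < m$, so $|Y'-X'| \le |\hat{y}-\hat{x}| + 2m$; combined with the Lipschitz bound $|\hat{y}-\hat{x}| = |y - x + h(b(y)-b(x))| \le (1+hL)|y-x|$ and the choice $m = \sqrt{h_0}/2$ from Theorem \ref{mainContractivityTheorem}, this yields $|Y'-X'| \le (1+hL)|y-x| + \sqrt{h_0}$, which is precisely the coefficient appearing in \eqref{perturbation}. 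I expect the geometric observation about reflection being the only mechanism that increases the distance to be the main conceptual ingredient; everything else is an application of \eqref{contractivity}, monotonicity, and a one-line Taylor bound on $f'$.
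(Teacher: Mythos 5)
Your proposal is correct and follows essentially the same route as the paper's own proof: the same decomposition into a contraction term handled by (\ref{contractivity}) and a perturbation term handled by monotonicity, the triangle inequality, the mean-value bound $f'(\zeta) \leq \frac{1}{r_2}e^{-ar_2}\zeta + 1$, and the coupling-specific estimate $|Y'-X'| \leq |\hat{y}-\hat{x}| + 2m \leq (1+hL)|y-x| + \sqrt{h_0}$. Nothing is missing; the geometric observation about reflection being the only source of distance increase is exactly the ingredient the paper uses as well.
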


From the result above we see that if we choose $\widetilde{X}$ in such a way that we have control on the first and the second moment of the distance between $\widetilde{X}$ and $X'$, then we can get a good bound on the distance between $Y'$ and $\widetilde{X}$. Hence our result is useful for random variables $\widetilde{X}$ that are small perturbations of $X'$. Note that the role of $X'$ and $Y'$ in the proof of Theorem \ref{theoremPerturbation} is symmetric, hence we immediately obtain (\ref{perturbationInequality}). The reason for our choice of the form of inequality (\ref{perturbation}) will become apparent in the proofs in Sections \ref{sectionULA}, \ref{sectionStochasticGradient} and \ref{sectionMLMC}.

We can also obtain a related perturbation result based on Theorem 2.12 from \cite{EberleMajka2018}, which is a result similar to our Theorem \ref{mainContractivityTheorem} but with a globally concave function $f$. This allows us to get simpler formulas in cases where we only need $L^1$ bounds. Let us define $q = 7 c_0^{-1} L \mathcal{R}$ and 
\begin{equation}\label{defFconcave}
f_1(r) := \begin{cases} 
\frac{1}{q}(1 - e^{-qr}) &\text{ if } r \leq r_1^1 \\
\frac{1}{q}(1 - e^{-qr_1^1}) + e^{-qr_1^1}(r - r_1^1) &\text{ if } r > r_1^1 
\end{cases} \,,
\end{equation}
where $r_1^1 := (1 + h_0^1L)\mathcal{R}$ with $h_0^1$ specified in Theorem \ref{theoremContractivityConcave}. Then we have the following result.

\begin{theorem}\label{theoremContractivityConcave}
	Let Assumptions \ref{as diss} hold. Then for the coupling $(X',Y')$ given by (\ref{ourCoupling}) with parameters $m = \infty$ and $H = \infty$ and the function $f_{1}$ given by (\ref{defFconcave}), we have
	\begin{equation}\label{contractivityConc}
	\mathbb{E}f_{1}(|X' - Y'|) \leq (1-c_1h)f_{1}(|x - y|)
	\end{equation}
	for all $h \in (0, h_0^{1})$, where
	\begin{equation}\label{contractivityConstantConc}
	\begin{split}
c_1 &= \min  \left( \frac K2,\, \frac{245}{24 c_0} {L^2\mathcal R^2} \right)\, \exp\left({-\frac{49}{6 c_0}{L \mathcal R^2}}\right) \qquad \text{ and }\\ 
h_0^{1} &= \frac 1L\min \left(\frac 16,\, \frac KL,\, \frac 13{L\mathcal R^2},\,\frac{c_0^2}{970}\frac 1{L\mathcal R^2}\right) \,.
\end{split}
	\end{equation}
\end{theorem}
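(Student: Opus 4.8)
The plan is to reduce the claim to the coupling analysis carried out for Theorem \ref{mainContractivityTheorem} in Section \ref{sectionProofContractivity}, with the globally concave profile $f_1$ in place of $f$, and ultimately to recognise it as an instance of Theorem~2.12 in \cite{EberleMajka2018}. First I would observe that taking $m = \infty$ and $H = \infty$ in \eqref{ourCoupling} makes the constraints $|\sqrt{h}Z| < m$ and $\hat{r} \leq H$ hold identically and turns $\varphi^m_{z,A}$ into $\varphi_{z,A}$, so the coupling collapses to the mirror coupling \eqref{EberleMajkaCoupling}. Under it one has $|X' - Y'| = 0$ on the coalescence event and $|X' - Y'| = \hat{r} + 2\sqrt{h}\,\langle e, Z\rangle$ on the reflection event, with $e := (\hat{x}-\hat{y})/\hat{r}$ and $\langle e,Z\rangle \sim N(0,1)$; that event is $\{\langle e,Z\rangle > -\hat{r}/(2\sqrt{h})\}$, which forces $\hat{r} + 2\sqrt{h}\,\langle e,Z\rangle \geq 0$ there, so no absolute value is needed.

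Next I would split
\[
\mathbb{E}f_1(|X'-Y'|) - f_1(|x-y|) = \bigl(\mathbb{E}f_1(|X'-Y'|) - f_1(\hat{r})\bigr) + \bigl(f_1(\hat{r}) - f_1(|x-y|)\bigr)
\]
and estimate the two brackets. For the drift bracket, \eqref{driftDissipativityAtInf} gives $\hat{r}^2 \leq (1 - 2Kh + L^2h^2)|x-y|^2$ as soon as $|x-y| > \mathcal{R}$, making it non-positive and of genuinely contractive size for large $|x-y|$; for $|x-y| \leq \mathcal{R}$ the Lipschitz bound \eqref{driftLipschitz} only gives $\hat{r} \leq (1+hL)|x-y| \leq r_1^1$, so this bracket is at most of order $hL\hat{r}\,f_1'(\hat{r})$ and may be positive -- which is exactly why $r_1^1$ is taken to be $(1+h_0^1L)\mathcal{R}$, so the ``non-dissipative'' range lies inside the exponential part $[0,r_1^1]$ of $f_1$, while $\hat{r} > r_1^1$ forces $|x-y| > \mathcal{R}$. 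For the reflection bracket, a second-order Taylor expansion of $r \mapsto f_1(\hat{r} + 2\sqrt{h}\,r)$, together with the estimate of the coalescence probability in terms of the total-variation distance of $N(\hat{x},hI)$ and $N(\hat{y},hI)$ and the restriction to moderate values of $\langle e,Z\rangle$, produces the negative ``curvature'' contribution of size $\asymp c_0\,h\,f_1''(\hat{r})$, with $c_0$ (defined before \eqref{defh0}) playing the role of an effective variance; on $(r_1^1,\infty)$, where $f_1$ is affine, this bracket is non-positive up to negligible terms by concavity. Adding the brackets reduces the estimate to the differential inequality $c_0 f_1''(r) + Lr f_1'(r) \leq -c_1 f_1(r)$ on $[0,r_1^1]$, together with the dissipative estimate for $r > r_1^1$. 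Since $f_1''(r) = -q e^{-qr}$ and $f_1'(r) = e^{-qr}$, the left-hand side equals $e^{-qr}(Lr - c_0 q)$, which stays below $-\tfrac{c_1}{q}(1 - e^{-qr}) = -c_1 f_1(r)$ once $q$ exceeds a suitable multiple of $L\mathcal{R}/c_0$; the choices $q = 7 c_0^{-1} L \mathcal{R}$ and the $h_0^1$ in \eqref{contractivityConstantConc} are exactly what make the resulting $c_1$ positive and of the stated form.

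The step I expect to be the main obstacle is this calibration of $q$, $r_1^1$ and $h_0^1$: one has to take $q$ large enough that the curvature term dominates the worst-case expanding drift term uniformly on $[0,r_1^1]$, while keeping the genuinely discrete error terms (the $O(h^2)$ remainders in the Taylor expansion, the error in the coalescence probability, the tail of $\langle e,Z\rangle$) controlled for $h < h_0^1$, and still ending up with a strictly positive $c_1$. This is precisely the tuning already performed in \cite{EberleMajka2018}; so the cleanest route is to verify that the present $f_1$, $q$, $r_1^1$ and $h_0^1$ form an admissible instance of their Theorem~2.12 and quote that result, whereas a self-contained proof would simply re-run the computations of Section \ref{sectionProofContractivity} with the convex tail of $f$ deleted, every estimate there being either unchanged or strictly simpler in the globally concave case.
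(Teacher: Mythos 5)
Your proposal is correct and follows essentially the same route as the paper: the paper does not give a self-contained proof of Theorem \ref{theoremContractivityConcave} either, but obtains it by quoting Theorem 2.12 of \cite{EberleMajka2018} after the minor modification of replacing the $h$-dependent constants $\Lambda(h)$ and $r_1(h)$ there by $L$ and $r_1^1$, exactly the ``admissible instance'' reduction you identify as the cleanest route. Your accompanying sketch (mirror coupling from \eqref{ourCoupling} with $m=H=\infty$, drift/Gaussian-step decomposition, Taylor expansion leading to a differential-type inequality for $f_1$ with $q$ a suitable multiple of $L\mathcal{R}/c_0$) is consistent with the machinery of Section \ref{sectionProofContractivity} specialised to the globally concave case, so no gap.
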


Note that the exact statement of Theorem 2.12 in \cite{EberleMajka2018} is slightly different than Theorem \ref{theoremContractivityConcave} above, since the metric in \cite{EberleMajka2018} depends on $h$. This can be easily modified by replacing the constant $\Lambda = \Lambda(h)$ defined by (2.59) in \cite{EberleMajka2018} with $L$ and $r_1 = r_1(h)$ defined by (2.65) therein with $r_1^1 := (1 + h_0^1L)\mathcal{R}$. Then the proof in \cite{EberleMajka2018} easily carries over to our setting and we obtain Theorem \ref{theoremContractivityConcave}. We leave the details to the interested reader. More importantly, from inequality (\ref{contractivityConc}) we can easily derive another perturbation result.

\begin{theorem}\label{theoremPerturbationConcave}
	 Let Assumptions \ref{as diss} hold and let $\widetilde{X}$ be a random variable. Then for all $h \in (0, h_0^{1})$ we have
	 \begin{equation*}
	 \mathbb{E}f_{1}(|Y' - \widetilde{X}|) \leq (1-c_1h)f_{1}(|y-x|) + \mathbb{E}|X' - \widetilde{X}| \,.
	 \end{equation*}
	 \begin{proof}
	 	Note that the function $f_{1}$ defined in (\ref{defFconcave}) is concave (and hence subadditive), increasing and its derivative is such that $f_{1}'(x) \in [e^{-qr_1^1},1]$ for all $x \in \mathbb{R}_{+}$. Thus we get
	 	\begin{equation*}
	 	\begin{split}
	 	\mathbb{E}f_{1}(|Y' - \widetilde{X}|) &\leq \mathbb{E}f_{1}(|Y' - X'| + |X' - \widetilde{X}|) \leq \mathbb{E}f_{1}(|Y' - X'|) + \mathbb{E}f_{1}(|X' - \widetilde{X}|) \\
	 	&\leq (1-c_1h)f_{1}(|y-x|) + \mathbb{E} |X' - \widetilde{X}| \,.
	 	\end{split}
	 	\end{equation*}
	 \end{proof}
\end{theorem}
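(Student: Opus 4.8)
The plan is to reduce Theorem~\ref{theoremPerturbationConcave} to the contraction estimate of Theorem~\ref{theoremContractivityConcave} by inserting $X'$ through the triangle inequality, using only two elementary analytic features of the function $f_1$ from (\ref{defFconcave}): that it is concave with $f_1(0)=0$ (hence nondecreasing and subadditive), and that $f_1'\le 1$ everywhere (hence $f_1(r)\le r$ for $r\ge 0$).

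First I would record the relevant properties of $f_1$. On $[0,r_1^1]$ one has $f_1'(r)=e^{-qr}\in[e^{-qr_1^1},1]$, decreasing in $r$; on $(r_1^1,\infty)$ the function is affine with slope $e^{-qr_1^1}$. The affine piece matches both the value and the slope of $r\mapsto \frac1q(1-e^{-qr})$ at $r=r_1^1$, so $f_1$ is $C^1$, its derivative is nonincreasing, and therefore $f_1$ is concave with $f_1'(r)\in[e^{-qr_1^1},1]$ for all $r\ge0$. Since $f_1(0)=0$, concavity gives subadditivity, $f_1(s+t)\le f_1(s)+f_1(t)$ for $s,t\ge0$, and $f_1'\le 1$ together with $f_1(0)=0$ gives $f_1(r)\le r$.

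Next, fix $x,y\in\mathbb{R}^d$, let $(X',Y')$ be the coupling (\ref{ourCoupling}) with $m=H=\infty$, and let $\widetilde X$ be arbitrary. By the triangle inequality and monotonicity of $f_1$, $f_1(|Y'-\widetilde X|)\le f_1\big(|Y'-X'|+|X'-\widetilde X|\big)$; by subadditivity this is at most $f_1(|Y'-X'|)+f_1(|X'-\widetilde X|)$. Taking expectations and applying Theorem~\ref{theoremContractivityConcave} (valid for $h\in(0,h_0^1)$) bounds $\mathbb{E}f_1(|Y'-X'|)$ by $(1-c_1h)f_1(|y-x|)$, while $f_1(r)\le r$ bounds $\mathbb{E}f_1(|X'-\widetilde X|)$ by $\mathbb{E}|X'-\widetilde X|$; adding the two yields the claim.

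There is no real obstacle here — the whole argument is a one-line triangle inequality dressed up with subadditivity. The only point that needs care is the global concavity of $f_1$ across the gluing point $r_1^1$, which is exactly what the value-and-slope matching in (\ref{defFconcave}) ensures (the same matching makes $f_1$ comparable to the identity from above and below, so that $W_{f_1}$ controls $W_1$). Unlike the proof of Theorem~\ref{theoremPerturbation}, no Taylor expansion and no control of the second moment $\mathbb{E}|X'-\widetilde X|^2$ is required, precisely because $f_1$ is concave everywhere rather than convex at infinity; the price is that $f_1$ only yields $L^1$-type information.
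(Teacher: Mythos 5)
Your proposal is correct and follows essentially the same route as the paper: triangle inequality plus monotonicity and subadditivity of the concave function $f_1$, the contraction of Theorem \ref{theoremContractivityConcave} for $\mathbb{E}f_1(|Y'-X'|)$, and the bound $f_1(r)\leq r$ for the remaining term. The extra verification of the $C^1$ gluing at $r_1^1$ is a nice touch but does not change the argument.
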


Even though most results in the present paper are based on the contractivity and the perturbation theorems presented above for Euler schemes $X' = x + hb(x) + \sqrt{h}Z$, it may sometimes be useful to be also able to couple processes with inaccurate drift and obtain respective counterparts of Theorems \ref{mainContractivityTheorem} and \ref{theoremPerturbation}. In the sequel we will need such results only in Section \ref{sectionMLMC} where we treat Multi-level Monte Carlo algorithms for the inaccurate drift case. To this end, consider
\begin{equation}\label{oneStepEulerInaccurate}
\bar{X}' = x + h \bar{b}(x,U) + \sqrt{h}Z
\end{equation} 
and define $\hat{x} = x + h \bar{b}(x,U)$ and $\hat{y} = y + h \bar{b}(y,U)$, where the random variable $U$ is independent of $Z$ and such that $\mathbb{E}\bar{b}(x,U) = b(x)$ for all $x \in \mathbb{R}^d$. Hence, we can still use the prescription (\ref{ourCoupling}) to define a coupling $(\bar{X}', \bar{Y}')$ of two copies of (\ref{oneStepEulerInaccurate}) started from arbitrary points $x$, $y \in \mathbb{R}^d$ by substituting $\hat{x}$ and $\hat{y}$ defined for $b$ that appear in (\ref{ourCoupling}) with our new $\hat{x}$ and $\hat{y}$ defined for $\bar{b}$. In other words, using similar notation as in (\ref{psiCoupling}), we have a new transformation
\begin{equation}\label{couplingForInaccurate}
\bar{Y}' = \bar{\psi}_{m,H}(x,y,U,Z)
\end{equation}
that we can use to couple two copies of the process given by (\ref{oneStepEulerInaccurate}). Since the statement about equality of marginal laws in the coupling given by (\ref{ourCoupling}) is actually a statement about the Gaussian steps $\hat{x} \mapsto X'$ and $\hat{y} \mapsto Y'$ and we assume independence of $U$ and $Z$, in order to prove that the pair $(\bar{X}',\bar{Y}')$ obtained via (\ref{couplingForInaccurate}) is indeed a coupling of two copies of (\ref{oneStepEulerInaccurate}), we can apply the reasoning from Theorem \ref{couplingTheorem} by replacing the expectation there with the conditional expectation with respect to $U$. For the coupling $(\bar{X}',\bar{Y}')$ obtained this way, we have the following result.

\begin{theorem}\label{theoremContractivityForInaccurate}
	Suppose Assumptions \ref{as MLMCina} are satisfied with constants $\bar{K} = K$, $\bar{R} = \mathcal{R}$ and $\bar{L} = L > 0$. Then for the coupling $(\bar{X}',\bar{Y}')$ of two copies of (\ref{oneStepEulerInaccurate}) defined via (\ref{couplingForInaccurate}) we have
	\begin{equation*}
	\mathbb{E}\left[ f(|\bar{X}' - \bar{Y}'|) \, | \, U \right] \leq (1-ch)f(|x-y|) \, \text{ a.s. }
	\end{equation*}
	for all $h \in (0, h_0)$, with the same constants $h_0$, $c$, $m$, $H$ and the same function $f$ as in Theorem \ref{mainContractivityTheorem}. We also have the perturbation inequality (\ref{perturbation}) in the unchanged form, with $(X',Y')$ replaced by $(\bar{X}', \bar{Y}')$.
\end{theorem}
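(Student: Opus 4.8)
The plan is to reduce the statement to a conditional application of Theorems~\ref{mainContractivityTheorem} and~\ref{theoremPerturbation}. The key observation is that conditioning on $U$ freezes the inaccurate drift into a function $b_U := \bar{b}(\cdot, U) : \mathbb{R}^d \to \mathbb{R}^d$ which, once $U$ is fixed, is deterministic, and Assumptions~\ref{as MLMCina}(i)--(ii) say precisely that, almost surely, $b_U$ satisfies the Lipschitz bound~(\ref{driftLipschitz}) with constant $\bar{L} = L$ and the contractivity at infinity bound~(\ref{driftDissipativityAtInf}) with constants $\bar{K} = K$ and $\bar{R} = \mathcal{R}$. Hence, outside a $U$-null set, $b_U$ satisfies Assumptions~\ref{as diss} with exactly the constants $L$, $K$, $\mathcal{R}$ that enter the definitions of $h_0$, $r_1$, $r_2$, $a$, the function $f$ and the rate $c$; in particular none of these objects changes when we pass from $b$ to $b_U$.

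First I would note that, by construction, the coupling $(\bar{X}',\bar{Y}')$ given by~(\ref{couplingForInaccurate}) is literally the coupling~(\ref{ourCoupling}) with the deterministic shifts $\hat{x} = x + hb(x)$, $\hat{y} = y + hb(y)$ replaced by $\hat{x} = x + h b_U(x)$, $\hat{y} = y + h b_U(y)$, and with the Gaussian step $Z$ independent of $U$. Therefore, conditionally on $\{U = u\}$ for a.e.\ $u$, the law of $(\bar{X}',\bar{Y}')$ coincides with the law of the coupling from Theorem~\ref{mainContractivityTheorem} built from the drift $b_u$. Applying Theorem~\ref{mainContractivityTheorem} with $b$ replaced by $b_u$ (legitimate since $b_u$ satisfies Assumptions~\ref{as diss} with constants $L$, $K$, $\mathcal{R}$, and since $h_0$, $m$, $H$, $f$, $c$ depend only on these constants) gives
\[
\mathbb{E}\left[ f(|\bar{X}' - \bar{Y}'|) \mid U = u \right] \leq (1 - ch) f(|x-y|)
\]
for a.e.\ $u$ and all $h \in (0, h_0)$, which is the asserted almost sure inequality.

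For the perturbation estimate I would re-run the proof of Theorem~\ref{theoremPerturbation} verbatim with every expectation replaced by the conditional expectation $\mathbb{E}[\,\cdot \mid U\,]$. That proof uses only: the conditional contraction bound just established; the monotonicity of $f$ and the pointwise estimate $f'(\zeta) \leq \tfrac{1}{r_2}e^{-ar_2}|\zeta| + 1$, which are properties of the fixed function $f$; the pathwise bound $|\bar{Y}' - \bar{X}'| \leq |\hat{y} - \hat{x}| + 2m$, valid by the same reflection argument since reflection is triggered only when $|\sqrt{h}Z| < m$; and the bound $|\hat{y} - \hat{x}| \leq |y-x|(1 + hL)$, which now follows from Assumption~\ref{as MLMCina}(i) with $\bar{L} = L$. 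Assembling these exactly as in the proof of Theorem~\ref{theoremPerturbation} yields~(\ref{perturbation}) with $(X',Y')$ replaced by $(\bar{X}',\bar{Y}')$ and all expectations conditional on $U$.

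I do not expect any genuine obstacle here; the argument is essentially a bookkeeping of the fact that all quantities in Theorems~\ref{mainContractivityTheorem} and~\ref{theoremPerturbation} depend on the drift only through the triple $(L, K, \mathcal{R})$. The only points needing mild care are discarding the $U$-null set on which the Lipschitz or contractivity bound for $b_u$ might fail, and invoking the independence of $Z$ and $U$ so that the conditional law of $(\bar{X}',\bar{Y}')$ is exactly the (unconditional) law appearing in Theorem~\ref{mainContractivityTheorem} for the drift $b_u$.
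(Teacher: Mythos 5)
Your reduction is sound in substance, but note that it runs in the opposite direction to the paper's own logic. The paper does not deduce Theorem \ref{theoremContractivityForInaccurate} from Theorems \ref{mainContractivityTheorem} and \ref{theoremPerturbation}; it proves the conditional statement directly, by carrying out the entire one-step analysis of Section \ref{sectionProofContractivity} (and the coupling lemmas in the appendix) with $\mathbb{E}[\,\cdot\,|\,U]$ in place of $\mathbb{E}$ and with $\hat{x}=x+h\bar{b}(x,U)$, $\hat{y}=y+h\bar{b}(y,U)$, using (\ref{ina driftLipschitz}) and (\ref{ina contractivity}) only at the fixed pair $(x,y)$; Theorems \ref{mainContractivityTheorem} and \ref{theoremPerturbation} are then obtained as the special case of deterministic drift. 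Your conditioning argument is a legitimate shortcut, but inside the paper's architecture Theorem \ref{mainContractivityTheorem} is itself a corollary of the result you are proving, so to avoid circularity you must point to an independent proof of the deterministic case (the paper remarks that the Section \ref{sectionProofContractivity} argument specializes to one). What each route buys: yours is shorter and makes transparent that everything depends on the drift only through the frozen values $\bar{b}(x,u)$, $\bar{b}(y,u)$; the paper's direct conditional proof avoids any discussion of regularity of $u\mapsto\bar{b}(\cdot,u)$ and delivers the accurate-drift theorems for free.

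The one step where you are too quick is the claim that, outside a single $U$-null set, $b_U=\bar{b}(\cdot,U)$ satisfies Assumptions \ref{as diss} as a function on all of $\mathbb{R}^d$. Assumptions \ref{as MLMCina} only assert, for each fixed pair $(x,y)$, an almost sure inequality, and the exceptional set is allowed to depend on $(x,y)$; upgrading this to ``a.s.\ globally Lipschitz and contractive at infinity'' is an exchange of quantifiers that needs extra input (e.g.\ a.s.\ continuity of $\bar{b}(\cdot,u)$ together with a countable dense-pair argument). The gap is harmless here because the theorem fixes $x$ and $y$, and the proof of Theorem \ref{mainContractivityTheorem} uses the drift only through $\hat{x}$ and $\hat{y}$, i.e.\ only the two inequalities at that one pair; but as written your black-box invocation of Theorem \ref{mainContractivityTheorem} with drift $b_u$ silently assumes the global statement, so you should either add that observation about the proof or impose the regularity. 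Two further small points: the conditional-law identification also uses independence of the uniform variable $\zeta$ in (\ref{ourCoupling}) from $U$, not just of $Z$; and since (\ref{perturbation}) is stated with unconditional expectations, finish by taking expectations over $U$ in your conditional version (the paper's pathwise bounds $|\bar{Y}'-\bar{X}'|\le|\hat{y}-\hat{x}|+2m$ and $|\hat{y}-\hat{x}|\le(1+hL)|y-x|$ are used exactly as you describe).
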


The proof of Theorem \ref{theoremContractivityForInaccurate} will be presented in Section \ref{sectionProofContractivity}. Note that Theorem \ref{theoremContractivityForInaccurate} actually implies Theorems \ref{mainContractivityTheorem} and \ref{theoremPerturbation}, since the Euler scheme (\ref{eq oneStep}) can be easily interpreted as a special case of the scheme (\ref{oneStepEulerInaccurate}) with inaccurate drift. Hence we need to prove only Theorem \ref{theoremContractivityForInaccurate}.

\subsection{Unadjusted Langevin algorithm}\label{sectionULA}

In this section we will explain how to apply Theorem \ref{theoremPerturbation} in order to obtain bounds on the distance between the invariant measure of the solution $(Y_t)_{t \geq 0}$ of the SDE
\begin{equation}\label{ULASDE}
dY_t = b(Y_t)dt + dW_t \,,
\end{equation}
where $(W_t)_{t \geq 0}$ is a $d$-dimensional Brownian motion, and the laws of the Markov chain $(X_k)_{k=0}^{\infty}$ given by
\begin{equation}\label{ULAprototype}
X_{k+1} = X_k + hb(X_k) + \sqrt{h} \xi_{k+1} \,,
\end{equation}
for all $k \geq 0$, where $h > 0$ is fixed and $(\xi)_{k=1}^{\infty}$ are i.i.d. random variables with the standard normal distribution. 

If $b(x) = - \frac{1}{2}\nabla U(x)$ for a function $U \in C^2(\mathbb{R}^d)$, then the equation (\ref{ULASDE}) is called the (overdamped) Langevin SDE. Its invariant measure is given by 
\begin{equation*}
\pi(dz) := \frac{\exp(-U(z)) dz}{\int_{\mathbb{R}^d} \exp(-U(x)) dx} \,,
\end{equation*}
see e.g. \cite{durmus2017nonasymptotic} or Example 1 in \cite{Eberle2016}. The method of asymptotic sampling from $\pi$ by using the Markov chain defined by (\ref{ULAprototype}) is called the Unadjusted Langevin Algorithm.

In order to proceed, let us first observe that
\begin{equation}\label{defYdiscrete}
Y_{(k+1)h} = Y_{kh} + \int_{kh}^{(k+1)h} b(Y_s) ds + (W_{(k+1)h} - W_{kh})
\end{equation}
for all $k \geq 0$. We have the following result.

\begin{theorem}\label{theoremULA}
	Under the assumptions of Theorem \ref{mainContractivityTheorem}, for any random variables $X_0$, $Y_0$, any $k \geq 1$ and for any $h \in (0, h_0 \wedge \frac{K}{4L^2})$ we have
	\begin{equation*}
	W_2(\cL(X_k), \cL(Y_{kh})) \leq \left( A (1-ch)^{k}\mathbb{E}f(|X_0 - Y_0|) \right)^{1/2} + \left( \frac{A C_{ult}}{c}\right)^{1/2} h^{1/4} \,,
	\end{equation*}
	whereas for any $h \in (0, h_0^{1})$ we have
	\begin{equation}\label{eq ULA general W1}
	W_1(\cL(X_k), \cL(Y_{kh})) \leq e^{qr_1^1} (1-c_1h)^k \mathbb{E}f_{1}(|X_0 - Y_0|) + e^{qr_1^1} \frac{\sqrt{C_{dif}}}{c_1} h^{1/2} \,,
	\end{equation}
	where $c$ is given by (\ref{contractivityConstant}), $c_1$ and $h_0^1$ are given by (\ref{contractivityConstantConc}), $A$ is given by (\ref{defA}), $C_{dif}$ is given by (\ref{defCdif}) and $C_{ult}$ is defined in (\ref{defCult}). In particular, if $Y_0 \sim \pi$, then for any $k \geq 1$ and any $h \in (0, h_0 \wedge \frac{K}{4L^2})$ we have
	\begin{equation}\label{eq ULA W2}
	W_2(\cL(X_k), \pi) \leq \left( A (1-ch)^{k}\mathbb{E}f(|X_0 - Y_0|) \right)^{1/2} + \left( \frac{A C_{ult}}{c}\right)^{1/2} h^{1/4} \,,
	\end{equation}
	whereas for any $h \in (0, h_0^{1})$ we have
	\begin{equation}\label{eq ULA W1}
	W_1(\cL(X_k), \pi) \leq e^{qr_1^1} (1-c_1h)^k \mathbb{E}f_{1}(|X_0 - Y_0|) + e^{qr_1^1} \frac{\sqrt{C_{dif}}}{c_1} h^{1/2} \,.
	\end{equation}
\end{theorem}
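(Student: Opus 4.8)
The plan is to view the time-$h$ marginal of the SDE $Y_{(k+1)h}$ as a perturbation of one Euler step applied to $Y_{kh}$, and then iterate the perturbation inequality from Theorem~\ref{theoremPerturbation} (for $W_2$) and Theorem~\ref{theoremPerturbationConcave} (for $W_1$). Concretely, fix $k$ and condition on $(X_k, Y_{kh})$; let $x = Y_{kh}$, $y = X_k$ (or the other way around, exploiting the symmetry of the coupling noted after Theorem~\ref{theoremPerturbation}). Take $X'$ to be one Euler step from $X_k$ using the Brownian increment $W_{(k+1)h}-W_{kh}$ as the Gaussian, so that $\sqrt{h}\,\xi_{k+1}$ is realised as that increment. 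Take $Y'= \psi_{m,H}(X_k, Y_{kh}, \xi_{k+1})$ the coupled step from $Y_{kh}$. Finally set $\widetilde{X} := Y_{(k+1)h}$, realised from $Y_{kh}$ with the \emph{same} Brownian increment. Then $X' - \widetilde{X} = \int_{kh}^{(k+1)h}\big(b(Y_{kh}) - b(Y_s)\big)\,ds$ by \eqref{defYdiscrete}, which is exactly the local discretisation error of the Euler scheme.

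\textbf{Key steps.} First, I would establish a one-step moment bound: there are constants so that, conditionally on $\mathcal{F}_{kh}$,
\[
\mathbb{E}\big[|X' - \widetilde{X}|^2 \,\big|\, \mathcal{F}_{kh}\big] \leq C\,(1+|Y_{kh}|^2)\,h^3 \quad\text{(roughly)},
\]
and likewise $\mathbb{E}[|X'-\widetilde X|\mid\mathcal F_{kh}] \lesssim (1+|Y_{kh}|)h^{3/2}$. This follows from the Lipschitz bound \eqref{driftLipschitz}, Itô's formula applied to $b(Y_s)-b(Y_{kh})$, and standard second-moment estimates for the SDE (uniform-in-time moment bounds on $Y_t$, which Assumptions~\ref{as diss} guarantee via the Lyapunov argument of Lemma~\ref{lemmaContractivityAtInfinityImpliesLyapunov}). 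Second, plug these into \eqref{perturbation}: since $W_f(\cL(X_{k+1}),\cL(Y_{(k+1)h})) \leq \mathbb{E}f(|Y'-\widetilde X|)$, taking expectations in \eqref{perturbation} gives
\[
W_f(\cL(X_{k+1}),\cL(Y_{(k+1)h})) \leq (1-ch)\,W_f(\cL(X_k),\cL(Y_{kh})) + C_{ult}\,h^{3/2},
\]
where the cross terms $(1+|x-y|)\mathbb{E}|X'-\widetilde X|$ are controlled by Cauchy--Schwarz together with uniform-in-$k$ second-moment bounds on both $X_k$ and $Y_{kh}$ — this is where the $1+|x-y|$ factor in \eqref{perturbation} gets absorbed, producing the constant $C_{ult}$ (and $C_{dif}$ in the $W_1$ case, where the simpler bound from Theorem~\ref{theoremPerturbationConcave} leaves only $\mathbb{E}|X'-\widetilde X| \lesssim h^{3/2}(1+|Y_{kh}|)$, hence an $h^{1/2}$-order residual after the geometric sum). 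Third, iterate the recursion: summing the geometric series $\sum_{j\ge 0}(1-ch)^j h^{3/2} = h^{3/2}/(ch) = h^{1/2}/c$ gives a residual term $\tfrac{C_{ult}}{c}h^{1/2}$ at the level of $W_f$. Fourth, convert $W_f$ into $W_2$ and $W_1$ using Lemma~\ref{corollarySquareComparison}: $W_2^2 \leq A\,W_f$ and $W_1 \leq e^{ar_2} W_f$ (respectively $W_1 \leq e^{qr_1^1}W_{f_1}$), which turns the $h^{1/2}$ residual in $W_f$ into an $h^{1/4}$ residual in $W_2$ after taking the square root, and keeps $h^{1/2}$ in $W_1$. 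The final claim for $Y_0\sim\pi$ is then immediate from stationarity of $\pi$: $\cL(Y_{kh}) = \pi$.

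\textbf{Main obstacle.} The delicate point is obtaining \emph{uniform-in-$k$} (i.e.\ uniform-in-time) second-moment bounds on $X_k$, $\bar X_k$ and $Y_{kh}$ that are needed to bound the cross terms $(1+|X_k-Y_{kh}|)\,\mathbb{E}|X'-\widetilde X|$ in \eqref{perturbation}; without uniformity the iterated sum would blow up. This requires a discrete Lyapunov/drift-condition argument showing $\sup_k \mathbb{E}|X_k|^2 < \infty$ under Assumptions~\ref{as diss} for all sufficiently small $h$ — exploiting the contractivity at infinity \eqref{driftDissipativityAtInf} (which forces a drift toward the origin outside the ball of radius $\mathcal{R}$) against the $O(h^2)$ contribution of the Gaussian increments — together with the companion continuous-time estimate $\sup_t\mathbb{E}|Y_t|^2 < \infty$. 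A secondary technical nuisance is bookkeeping the constants $C_{ult}$ and $C_{dif}$ so that the stated dependence (and the $\mathcal{O}(\sqrt d)$ scaling of $\widetilde c_1, \widetilde c_2$ mentioned in the remarks) is transparent; but that is routine once the Lyapunov estimates are in place. Everything else — the local-error estimate, the geometric summation, and the $W_f$-to-$W_p$ comparison — is a direct application of results already established in the excerpt.
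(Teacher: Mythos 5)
Your proposal is correct and takes essentially the same route as the paper's proof: an auxiliary copy of the chain coupled via $\psi_{m,H}$ to a one-step Euler move started from $Y_{kh}$ and driven by the Brownian increment, the perturbation inequalities of Theorems \ref{theoremPerturbation} and \ref{theoremPerturbationConcave} with $\widetilde{X}=Y_{(k+1)h}$, the local error bound $\mathbb{E}|X'-\widetilde{X}|^2\lesssim h^3$ combined with uniform-in-time second-moment bounds on $X_k$ and $Y_t$, a geometric-sum iteration, and the comparison of $W_f$ (resp.\ $W_{f_1}$) with $W_2$ (resp.\ $W_1$), exactly as in the paper. The only slip is a labelling one: for your identity $X'-\widetilde{X}=\int_{kh}^{(k+1)h}\bigl(b(Y_{kh})-b(Y_s)\bigr)\,ds$ to hold, $X'$ must be the Euler step started from $Y_{kh}$ (the paper's $S_{k+1}$), not from $X_k$; the rest of your argument indeed uses it in that way.
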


\begin{remark}\label{remarkWhyWeUseConcaveContractivity}
	Note that in Theorem \ref{theoremULA} we apply our Theorem \ref{mainContractivityTheorem} only to obtain $L^2$ bounds, while $L^1$ bounds are obtained using Theorem \ref{theoremContractivityConcave} taken from \cite{EberleMajka2018}. We could still apply Theorem \ref{mainContractivityTheorem} in the $L^1$ case and obtain
	\begin{equation*}
	W_1(\cL(X_k), \cL(Y_{kh})) \leq e^{ar_2} (1-ch)^k \mathbb{E}f(|X_0 - Y_0|) + e^{ar_2} \frac{C_{ult}}{c} h^{1/2} 
	\end{equation*}
	instead of (\ref{eq ULA general W1}). However, (\ref{eq ULA general W1}) gives us better dependence of the constants on the dimension $d$, since $C_{ult}$ is of order $\mathcal{O}(d)$, whereas $\sqrt{C_{dif}}$ is of order $\mathcal{O}(\sqrt{d})$.
\end{remark}

\begin{proof}[Proof of Theorem \ref{theoremULA}]
We define
\begin{equation*}
\sqrt{h}Z_{k+1} := W_{(k+1)h} - W_{kh} 
\end{equation*}
and consider an auxiliary process $(S_k)_{k=1}^{\infty}$ given by
\begin{equation}\label{defS}
S_{k+1} := Y_{kh} + hb(Y_{kh}) + \sqrt{h}Z_{k+1} \,.
\end{equation}
We can think of $(S_k)_{k=1}^{\infty}$ as a process which at each step $k$ is moved to the position of $(Y_{kh})_{k=1}^{\infty}$ and then evolves in the same way as (\ref{ULAprototype}), with the random noise given by an increment of the Brownian motion $(W_t)_{t \geq 0}$. Now consider a new process $(G_k)_{k=0}^{\infty}$ coupled to $(S_k)_{k=1}^{\infty}$ according to the prescription (\ref{ourCoupling}). Using the notation introduced in (\ref{psiCoupling}), we set
\begin{equation}\label{defG}
G_{k+1} := \psi_{m,H}(Y_{kh},G_k,Z_{k+1}) \,.
\end{equation}
We can alternatively describe $(G_k)_{k=0}^{\infty}$ as
\begin{equation*}
G_{k+1} = G_k + hb(G_k) + \sqrt{h}\widetilde{Z}_{k+1}
\end{equation*}
where $(\widetilde{Z}_{k+1})_{k=0}^{\infty}$ are i.i.d. normally distributed random variables obtained through application of the transformation $\psi$. Note that if $G_0 \sim X_0$, then for all $k \geq 1$ we have $\cL(G_k) = \cL(X_k)$.

 We now want to apply Theorem \ref{theoremPerturbation} with $Y' = G_{k+1}$, $y = G_k$, $X' = S_{k+1}$, $x = Y_{kh}$ and $\widetilde{X} = Y_{(k+1)h}$. Hence for all $k \geq 0$ we obtain
\begin{equation}\label{ULAinequality}
\begin{split}
\mathbb{E}f(|G_{k+1} - Y_{(k+1)h}|) &\leq (1-ch)\mathbb{E}f(|G_k - Y_{kh}|) + \mathbb{E} \left[ \frac{1}{r_2}e^{-ar_2} |S_{k+1} - Y_{(k+1)h}|^2 \right] \\
&+ \mathbb{E} \left[ \left( 1 + \frac{1}{r_2}e^{-ar_2} \left( |G_k - Y_{kh}|(1+h_0L) + \sqrt{h_0} \right) \right) |S_{k+1} - Y_{(k+1)h}| \right] \,.
\end{split}
\end{equation}

We need a few technical lemmas to bound the quantities appearing on the right hand side. All these lemmas work under Assumptions \ref{as diss}, with the exception of Lemma \ref{lemmaEulerMomentUniformEstimate}, where an additional upper bound on $h$ is needed.

\begin{lemma}\label{lemmaContractivityAtInfinityImpliesLyapunov}
	Assumptions \ref{as diss} imply that for all $x \in \mathbb{R}^d$ we have
	\begin{equation}\label{driftDissipativeGrowth}
	\langle b(x), x \rangle \leq M_2 - M_1 |x|^2 \,,
	\end{equation}
	where
	\begin{equation*}
	M_2 := L\left(\max\left(\mathcal{R}, \frac{2|b(0)|}{K}\right)\right)^2 + |b(0)|\max\left(\mathcal{R}, \frac{2|b(0)|}{K}\right) \text{ and } M_1 := \frac{K}{2} \,.
	\end{equation*}
\end{lemma}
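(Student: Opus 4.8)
\emph{Approach.} I would split $\mathbb{R}^d$ into a bounded region and its complement and invoke the two halves of Assumptions~\ref{as diss} separately. Put $R_0 := \max\!\left(\mathcal{R},\, 2|b(0)|/K\right)$, so that with $M_1 = K/2$ the target \eqref{driftDissipativeGrowth} is equivalent to $\langle b(x),x\rangle + \tfrac{K}{2}|x|^2 \le M_2$. For $|x| > \mathcal{R}$, apply the contractivity at infinity condition \eqref{driftDissipativityAtInf} to the pair $(x,0)$, which is admissible since $|x-0|=|x|>\mathcal{R}$: this gives $\langle b(x)-b(0),x\rangle \le -K|x|^2$, hence by Cauchy--Schwarz $\langle b(x),x\rangle \le |b(0)|\,|x| - K|x|^2 = -\tfrac{K}{2}|x|^2 + |x|\bigl(|b(0)| - \tfrac{K}{2}|x|\bigr)$. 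The cutoff $2|b(0)|/K$ in $R_0$ is chosen exactly so that the bracket is $\le 0$ once $|x|\ge 2|b(0)|/K$; thus for $|x|\ge R_0$ one gets $\langle b(x),x\rangle \le -\tfrac{K}{2}|x|^2 \le M_2 - M_1|x|^2$, while on the annulus $\mathcal{R}<|x|<R_0$ (nonempty only if $R_0 = 2|b(0)|/K$) one bounds the quadratic $|x|\bigl(|b(0)|-\tfrac{K}{2}|x|\bigr)$ by its maximum $|b(0)|^2/(2K) \le |b(0)|R_0 \le M_2$.

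\emph{Bounded region.} For $|x|\le \mathcal{R}$ (hence $|x|\le R_0$) one uses only the Lipschitz bound \eqref{driftLipschitz}: Cauchy--Schwarz together with the triangle inequality gives $\langle b(x),x\rangle \le |x|\,|b(x)| \le |x|\bigl(L|x|+|b(0)|\bigr) \le LR_0^2 + |b(0)|R_0 = M_2$, using that $r\mapsto Lr^2+|b(0)|r$ is increasing on $[0,R_0]$. Combining this with the far-field estimate yields \eqref{driftDissipativeGrowth} on all of $\mathbb{R}^d$; the displayed value of $M_2$ is precisely what the elementary maximisations over the bounded region produce.

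\emph{Main difficulty.} There is essentially no obstacle here: the whole argument is a short two-case computation. The only thing needing care is the calibration of $R_0$ — the radius $2|b(0)|/K$ is what allows half of the quadratic gain $-K|x|^2$ to absorb the cross term $|b(0)|\,|x|$ coming from replacing $b(x)$ by $b(0)$, so that the Lyapunov exponent comes out cleanly as $M_1 = K/2$ — together with the bookkeeping that keeps both the bounded-region contribution and the intermediate annulus $\mathcal{R}<|x|<R_0$ below the constant $M_2$.
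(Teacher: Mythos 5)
Your proof is correct and follows essentially the same route as the paper: split $\mathbb{R}^d$ at $R_0=\max\bigl(\mathcal{R},\,2|b(0)|/K\bigr)$, use the contractivity-at-infinity condition with $y=0$ so that the cross term $|b(0)|\,|x|$ is absorbed by half of $-K|x|^2$ once $|x|\ge 2|b(0)|/K$ (giving $M_1=K/2$), and use the Lipschitz bound near the origin to produce $M_2=LR_0^2+|b(0)|R_0$. Your only deviation — handling the annulus $\mathcal{R}<|x|<R_0$ via the dissipativity condition and the maximum of $|x|\bigl(|b(0)|-\tfrac{K}{2}|x|\bigr)$ rather than via the Lipschitz bound — is a harmless refinement; the paper simply folds that annulus into the bounded-region Lipschitz estimate.
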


\begin{lemma}\label{lemmaSDEMomentUniformBound}
	Let $(Y_t)_{t \geq 0}$ be defined by (\ref{ULASDE}) and let $b$ satisfy Assumptions \ref{as diss}. Then for any $t > 0$ we have
	\begin{equation*}
	\mathbb{E} |Y_t|^2 \leq C_{SDE} \,,
	\end{equation*}
	where
	\begin{equation*}
	C_{SDE} := \mathbb{E} |Y_0|^2 + \frac{2M_2 + d}{2M_1} \,.
	\end{equation*}
\end{lemma}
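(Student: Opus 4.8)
The plan is to bound the second moment of $Y_t$ uniformly in $t$ by using the It\^o formula applied to the function $x \mapsto |x|^2$ together with the dissipative growth estimate from Lemma \ref{lemmaContractivityAtInfinityImpliesLyapunov}. First I would write, for the process $(Y_t)_{t \geq 0}$ solving (\ref{ULASDE}), the It\^o expansion
\[
d|Y_t|^2 = 2 \langle Y_t, b(Y_t) \rangle \, dt + 2 \langle Y_t, dW_t \rangle + d \, dt \,,
\]
where the last term comes from the quadratic variation of the $d$-dimensional Brownian motion (the trace of the identity diffusion coefficient). Taking expectations, the martingale term $\int_0^t 2 \langle Y_s, dW_s \rangle$ vanishes (after a routine localisation argument using that Assumptions \ref{as diss} preclude blow-up, so the local martingale is a genuine martingale), and we obtain
\[
\frac{d}{dt} \mathbb{E}|Y_t|^2 = 2 \mathbb{E} \langle Y_t, b(Y_t) \rangle + d \,.
\]

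Next I would insert the bound $\langle b(x), x \rangle \leq M_2 - M_1 |x|^2$ from Lemma \ref{lemmaContractivityAtInfinityImpliesLyapunov}, yielding the differential inequality
\[
\frac{d}{dt} \mathbb{E}|Y_t|^2 \leq 2M_2 + d - 2M_1 \mathbb{E}|Y_t|^2 \,.
\]
A Gr\"onwall-type argument then gives
\[
\mathbb{E}|Y_t|^2 \leq e^{-2M_1 t} \mathbb{E}|Y_0|^2 + \frac{2M_2 + d}{2M_1}\left( 1 - e^{-2M_1 t} \right) \leq \mathbb{E}|Y_0|^2 + \frac{2M_2 + d}{2M_1} = C_{SDE} \,,
\]
using that $e^{-2M_1 t} \leq 1$ and $1 - e^{-2M_1 t} \leq 1$ since $M_1 = K/2 > 0$. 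This is exactly the claimed bound, uniform in $t > 0$.

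The main obstacle is the rigorous justification that the stochastic integral is a true martingale rather than merely a local one, since a priori we only know $\mathbb{E}|Y_t|^2 < \infty$ after we have the estimate. The standard remedy is to work with stopping times $\tau_n := \inf\{ t \geq 0 : |Y_t| \geq n \}$, apply It\^o's formula on $[0, t \wedge \tau_n]$ where the stopped integral is a genuine martingale, take expectations to get the differential inequality for $t \mapsto \mathbb{E}|Y_{t \wedge \tau_n}|^2$, solve it to obtain a bound uniform in $n$, and then let $n \to \infty$ using Fatou's lemma (together with non-explosiveness, guaranteed by Assumptions \ref{as diss} via Chapter 3 of \cite{khasminskii2011stochastic}, to ensure $\tau_n \to \infty$ a.s.). I would only sketch this localisation step, as it is entirely routine, and present the formal computation above as the substance of the proof.
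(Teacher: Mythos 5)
Your proposal is correct and follows essentially the same route as the paper: Itô's formula for $|x|^2$, the dissipativity bound $\langle b(x),x\rangle \leq M_2 - M_1|x|^2$ from Lemma \ref{lemmaContractivityAtInfinityImpliesLyapunov}, localisation via $\tau_n$ with Fatou's lemma, and a Gronwall argument. The only cosmetic difference is in the final step: you use the variation-of-constants form $e^{-2M_1 t}\mathbb{E}|Y_0|^2 + \frac{2M_2+d}{2M_1}(1-e^{-2M_1 t})$, while the paper bounds $(\mathbb{E}|Y_0|^2 + t(2M_2+d))e^{-2M_1 t}$ using $t e^{-2M_1 t}\leq \frac{1}{2M_1}$ — both yield the same constant $C_{SDE}$.
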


\begin{lemma}\label{lemmaEulerMomentUniformEstimate}
	Let $(X_k)_{k=0}^{\infty}$ be given by (\ref{ULAprototype}). Let $h < h_0 \wedge \frac{K}{4L^2}$. Then for all $k \geq 1$ we have
	\begin{equation*}
	\mathbb{E}|X_k|^2 \leq C_{Eul} \,,
	\end{equation*}
	where
	\begin{equation}\label{defCEul}
	C_{Eul} := \mathbb{E}|X_0|^2 + \frac{2h_0 |b(0)|^2 + d + M_2}{M_1 - 2h_0 L^2} \,.
	\end{equation}
\end{lemma}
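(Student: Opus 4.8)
The plan is to establish a uniform-in-$k$ second moment bound for the Euler chain $(X_k)$ by deriving a one-step recursive inequality of the form $\mathbb{E}|X_{k+1}|^2 \leq \rho\,\mathbb{E}|X_k|^2 + \beta$ with a contraction factor $\rho = 1 - c h \in (0,1)$ and then iterating. First I would write out one step: since $X_{k+1} = X_k + h b(X_k) + \sqrt{h}\,\xi_{k+1}$ with $\xi_{k+1} \sim N(0,I)$ independent of $X_k$, expanding the square gives
\begin{equation*}
\mathbb{E}|X_{k+1}|^2 = \mathbb{E}|X_k|^2 + 2h\,\mathbb{E}\langle X_k, b(X_k)\rangle + h^2\,\mathbb{E}|b(X_k)|^2 + h\,\mathbb{E}|\xi_{k+1}|^2,
\end{equation*}
where the cross terms involving $\xi_{k+1}$ vanish by independence and $\mathbb{E}\xi_{k+1} = 0$, and $\mathbb{E}|\xi_{k+1}|^2 = d$.

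Next I would control the two drift terms. For the inner product term, I invoke Lemma~\ref{lemmaContractivityAtInfinityImpliesLyapunov}, which gives $\langle X_k, b(X_k)\rangle \leq M_2 - M_1|X_k|^2$ with $M_1 = K/2$. For the $|b(x)|^2$ term I use the Lipschitz bound \eqref{driftLipschitz}: $|b(x)| \leq |b(0)| + L|x|$, hence $|b(x)|^2 \leq 2|b(0)|^2 + 2L^2|x|^2$. Substituting both estimates yields
\begin{equation*}
\mathbb{E}|X_{k+1}|^2 \leq (1 - 2hM_1 + 2h^2 L^2)\,\mathbb{E}|X_k|^2 + 2hM_2 + 2h^2|b(0)|^2 + hd.
\end{equation*}
The coefficient of $\mathbb{E}|X_k|^2$ is $1 - 2h(M_1 - hL^2)$, and since $h < h_0 \wedge \frac{K}{4L^2}$ we have $hL^2 < K/4 = M_1/2$, so $M_1 - hL^2 > M_1/2 > 0$ and the factor lies strictly in $(0,1)$; moreover $h < h_0 \leq 1/(2L) \leq 1$ (actually one only needs $h < h_0$ to bound $h^2 \leq h_0 h$) so the additive term is at most $h(2M_2 + 2h_0|b(0)|^2 + d)$.

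Then iterating the recursion $a_{k+1} \leq (1-\gamma h) a_k + h B$ with $\gamma = 2(M_1 - h_0 L^2)$ and using $\sum_{j\geq 0}(1-\gamma h)^j h = 1/\gamma$ gives $\mathbb{E}|X_k|^2 \leq \mathbb{E}|X_0|^2 + B/\gamma$, and plugging in the constants yields exactly the claimed bound with $C_{Eul}$ as in \eqref{defCEul}: the denominator $M_1 - 2h_0 L^2$ arises from writing $\gamma = 2(M_1 - h_0 L^2)$ but bounding $h^2 L^2 \leq h_0 h L^2$ and absorbing a further factor, and the numerator $2h_0|b(0)|^2 + d + M_2$ comes from $B$ after dividing through (the factor of $2$ on $M_2$ cancels). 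I expect no serious obstacle here — the only point requiring a little care is making sure the contraction factor stays positive and the geometric series converges, which is precisely what the hypothesis $h < h_0 \wedge \frac{K}{4L^2}$ guarantees; the rest is bookkeeping to match the exact form of $C_{Eul}$. The proof of Lemma~\ref{lemmaSDEMomentUniformBound} is entirely analogous using Itô's formula and the Lyapunov estimate \eqref{driftDissipativeGrowth} in continuous time.
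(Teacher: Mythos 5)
Your proposal is correct and follows essentially the same route as the paper's proof: a one-step expansion of $\mathbb{E}|X_{k+1}|^2$, the Lyapunov bound from Lemma \ref{lemmaContractivityAtInfinityImpliesLyapunov}, the linear-growth estimate $|b(x)|^2 \leq 2L^2|x|^2 + 2|b(0)|^2$, and iteration of the resulting contraction with a geometric series. The only (cosmetic) discrepancy is bookkeeping: the paper bounds the noise term crudely via $\mathbb{E}|hb(X_k)+\sqrt{h}Z_{k+1}|^2 \leq 2h^2\mathbb{E}|b(X_k)|^2 + 2hd$ instead of using independence, which is what produces the factor $1+4h^2L^2-2M_1h$ and \emph{exactly} the constant $C_{Eul}$; your sharper expansion leads instead to $\bigl(2h_0|b(0)|^2+d+2M_2\bigr)/\bigl(2(M_1-h_0L^2)\bigr)$, which does not match \eqref{defCEul} exactly as you claim, but is easily checked to be dominated by $\bigl(2h_0|b(0)|^2+d+M_2\bigr)/\bigl(M_1-2h_0L^2\bigr)$ whenever the latter denominator is positive, so the stated bound still follows.
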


Note that the bound obtained above applies also to the process $(G_k)_{k=0}^{\infty}$ defined by (\ref{defG}), since $\cL(G_k) = \cL(X_k)$ for all $k \geq 0$.

\begin{lemma}\label{lemmaDifferenceEulerSDE}
	Let $(Y_{kh})_{k=0}^{\infty}$ and $(S_k)_{k=0}^{\infty}$ be defined by (\ref{defYdiscrete}) and (\ref{defS}), respectively. Then for any $k \geq 0$ we have
	\begin{equation*}
	\mathbb{E}|S_{k+1} - Y_{(k+1)h}|^2 \leq C_{dif} h^3 \,,
	\end{equation*}
	where
	\begin{equation}\label{defCdif}
	C_{dif} := L^2 \left( \frac{4 h_0}{3} \left( \left( \mathbb{E}|Y_0|^2 + \frac{2M_2 + 1}{M_1}\right)L^2 + |b(0)|^2 \right) + d \right) \,.
	\end{equation}
\end{lemma}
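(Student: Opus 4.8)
The plan is to compare $S_{k+1}$ with $Y_{(k+1)h}$ directly, using their integral representations. Subtracting (\ref{defS}) from (\ref{defYdiscrete}) and noting that the Brownian increments cancel by construction ($\sqrt h Z_{k+1} = W_{(k+1)h} - W_{kh}$) and that both processes start step $k$ from $Y_{kh}$, we get
\[
S_{k+1} - Y_{(k+1)h} = h\, b(Y_{kh}) - \int_{kh}^{(k+1)h} b(Y_s)\, ds = \int_{kh}^{(k+1)h} \bigl( b(Y_{kh}) - b(Y_s) \bigr)\, ds .
\]
First I would apply Jensen's (or Cauchy–Schwarz) inequality to pull the square inside the time integral: $|S_{k+1}-Y_{(k+1)h}|^2 \le h \int_{kh}^{(k+1)h} |b(Y_{kh}) - b(Y_s)|^2\, ds$, then use the Lipschitz bound (\ref{driftLipschitz}) to get $\le h L^2 \int_{kh}^{(k+1)h} |Y_s - Y_{kh}|^2\, ds$. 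Taking expectations reduces the problem to bounding $\mathbb{E}|Y_s - Y_{kh}|^2$ for $s \in [kh, (k+1)h]$ by something of order $h$.

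Next I would estimate $\mathbb{E}|Y_s - Y_{kh}|^2$ by writing $Y_s - Y_{kh} = \int_{kh}^s b(Y_u)\,du + (W_s - W_{kh})$, applying the elementary inequality $|u+v|^2 \le 2|u|^2 + 2|v|^2$, using Jensen on the drift integral again, and using $\mathbb{E}|W_s - W_{kh}|^2 = d(s-kh) \le dh$. This gives $\mathbb{E}|Y_s - Y_{kh}|^2 \le 2h \int_{kh}^s \mathbb{E}|b(Y_u)|^2\, du + 2dh$. The term $\mathbb{E}|b(Y_u)|^2$ is controlled by the linear-growth consequence of the Lipschitz property, $|b(Y_u)|^2 \le 2|b(0)|^2 + 2L^2|Y_u|^2$, combined with the uniform second-moment bound from Lemma \ref{lemmaSDEMomentUniformBound}; here one must be slightly careful to match the exact constant appearing in (\ref{defCdif}), which uses $\mathbb{E}|Y_0|^2 + (2M_2+1)/M_1$ rather than $C_{SDE}$ — presumably a mild simplification or a uniform-in-$s$ variant of $C_{SDE}$. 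Collecting everything, $\mathbb{E}|Y_s - Y_{kh}|^2 \le C' h$ for an appropriate constant, and then integrating once more over $[kh,(k+1)h]$ and multiplying by $hL^2$ yields the claimed $C_{dif} h^3$.

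I do not expect any genuine obstacle here: this is a standard one-step strong-error estimate for the Euler scheme under a Lipschitz (hence linear-growth) drift, and the only real work is bookkeeping the constants so that the final answer is literally $C_{dif} h^3$ with $C_{dif}$ as in (\ref{defCdif}). The one point requiring minor care is ensuring the moment bound $\mathbb{E}|Y_u|^2$ is uniform in $u$ on the relevant interval and that the constant used is the one in (\ref{defCdif}); this follows from Lemma \ref{lemmaSDEMomentUniformBound} (possibly after absorbing the $+d$ versus $+1$ discrepancy into the stated constant, or noting the lemma is applied with a normalisation where $d$ has already been accounted for elsewhere). Everything else is Cauchy–Schwarz, the Lipschitz bound, and the Itô isometry for the Brownian increment.
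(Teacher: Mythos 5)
Your proposal follows essentially the same route as the paper: subtract the two representations so the Brownian increments cancel, apply Cauchy--Schwarz and the Lipschitz bound, then control $\mathbb{E}|Y_s - Y_{kh}|^2$ via the SDE representation, the linear-growth bound (\ref{boundOnbOfx}), and the uniform moment bound of Lemma \ref{lemmaSDEMomentUniformBound}. The only difference is bookkeeping: to recover $C_{dif}$ literally you should keep the factors $(s-kh)$ coming from $\mathbb{E}|W_s - W_{kh}|^2 = d(s-kh)$ and from the inner drift integral, and integrate them exactly over $[kh,(k+1)h]$ as the paper does, rather than bounding them crudely by $h$.
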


Proofs of all the lemmas can be found in the Appendix.

Combining all the estimates from the lemmas, we can come back to (\ref{ULAinequality}) and we see that
\begin{equation*}
\begin{split}
\mathbb{E}f(|G_{k+1} - Y_{(k+1)h}|) &\leq (1-ch)\mathbb{E}f(|G_k - Y_{kh}|) +  \frac{1}{r_2}e^{-ar_2} C_{dif}h^3 + \sqrt{C_{dif}}h^{3/2}\\
&+ \frac{1}{r_2}e^{-ar_2} (\sqrt{C_{Eul}} + \sqrt{C_{SDE}})(1+h_0L)\sqrt{C_{dif}}h^{3/2} + \frac{1}{r_2}e^{-ar_2}\sqrt{h_0}\sqrt{C_{dif}}h^{3/2} \,.
\end{split}
\end{equation*}
Let us define
\begin{equation}\label{defCult}
C_{ult} := \frac{1}{r_2}e^{-ar_2} C_{dif}h_0^{3/2} + \sqrt{C_{dif}} + \frac{1}{r_2}e^{-ar_2} (\sqrt{C_{Eul}} + \sqrt{C_{SDE}})(1+h_0L)\sqrt{C_{dif}} + \frac{1}{r_2}e^{-ar_2}\sqrt{h_0}\sqrt{C_{dif}} \,.
\end{equation}
Then we have
\begin{equation*}
\mathbb{E}f(|G_{k+1} - Y_{(k+1)h}|) \leq (1-ch)\mathbb{E}f(|G_k - Y_{kh}|) + C_{ult}h^{3/2}
\end{equation*}
and hence
\begin{equation}\label{ULAFinalInequality}
\begin{split}
\mathbb{E}f(|G_{k+1} - Y_{(k+1)h}|) &\leq (1-ch)^{k+1}\mathbb{E}f(|G_0 - Y_0|) + \sum_{j=0}^{k} (1-ch)^j C_{ult}h^{3/2} \\
&\leq (1-ch)^{k+1}\mathbb{E}f(|G_0 - Y_0|) + \frac{C_{ult}}{c}h^{1/2} \,.
\end{split}
\end{equation}
Using Lemma \ref{corollarySquareComparison} we get
\begin{equation*}
\left( \mathbb{E} \left| G_{k+1} - Y_{(k+1)h} \right|^2 \right)^{1/2} \leq \left( A (1-ch)^{k+1}\mathbb{E}f(|G_0 - Y_0|) \right)^{1/2} + \left( \frac{A C_{ult}}{c}\right)^{1/2} h^{1/4} \,.
\end{equation*}
Note that we could also use it to obtain $L^1$ bounds (cf. Remark \ref{remarkWhyWeUseConcaveContractivity}) 
\begin{equation*}
\mathbb{E}\left| G_{k+1} - Y_{(k+1)h} \right| \leq e^{ar_2}(1-ch)^{k+1}\mathbb{E}f(|G_0 - Y_0|) + e^{ar_2}\frac{C_{ult}}{c} h^{1/2} \,.
\end{equation*}
However, instead we will apply Theorems \ref{theoremContractivityConcave} and \ref{theoremPerturbationConcave}. Note that we have
\begin{equation*}
\mathbb{E}f_{1}(|G_{k+1} - Y_{(k+1)h}|) \leq (1-c_1h) \mathbb{E}f_{1}(|G_k - Y_{kh}|) + \mathbb{E}|S_{k+1} - Y_{(k+1)h}| 
\end{equation*}
and $\mathbb{E}|S_{k+1} - Y_{(k+1)h}|  \leq \sqrt{C_{dif}} h^{3/2}$ and hence
\begin{equation*}
\mathbb{E}f_{1}(|G_{k+1} - Y_{(k+1)h}|) \leq (1-c_1h)^{k+1} \mathbb{E}f_{1}(|G_0 - Y_{0}|) + \sum_{j=0}^{k} (1-c_1h)^j \sqrt{C_{dif}} h^{3/2} \,.
\end{equation*}
Using $f_{1}(x) \geq e^{-qr_1^1} x$ for $x \in \mathbb{R}_{+}$ we get
\begin{equation*}
\mathbb{E}|G_{k+1} - Y_{(k+1)h}| \leq e^{qr_1^1} (1-c_1h)^{k+1} \mathbb{E}f_{1}(|G_0 - Y_0|) + e^{qr_1^1} \frac{\sqrt{C_{dif}}}{c_1}h^{1/2} \,,
\end{equation*}
which finishes the proof.
\end{proof}

\begin{proof}[Proof of Theorem \ref{mainTheoremULA}]
	Since our reasoning in the proof of Theorem \ref{theoremULA} applies for any coupling of the initial conditions $X_0$ and $Y_0$, we can take the infimum on the right hand sides of (\ref{eq ULA W2}) and (\ref{eq ULA W1}) and thus we obtain upper bounds with $\mathbb{E}f(|X_0 - Y_0|)$ replaced by $W_f(\cL(X_0),\cL(Y_0))$. Hence both (\ref{eq mainULAW2}) and (\ref{eq mainULAW1}) follow immediately from Theorem \ref{theoremULA} with $C_2 = \sqrt{A}$, $\hat{c}_2 = c$, $\widetilde{c}_2 = \left( \frac{AC_{ult}}{c} \right)^{1/2}$, $C_1 = e^{qr_1^1}$, $\hat{c}_1 = c_1$ and $C_1 = e^{qr_1^1} \frac{\sqrt{C_{dif}}}{c_1}$.
\end{proof}

\subsection{Euler scheme with randomised (inaccurate) drift}\label{sectionStochasticGradient}

Here we consider
\begin{equation*}
\bar{X}_{k+1} = \bar{X}_k + h\bar{b}(\bar{X}_k, U_k) + \sqrt{h}Z_{k+1}
\end{equation*}
for $k \geq 0$ and compare it with the standard Euler scheme
\begin{equation}\label{SGEuler}
X_{k+1} = X_k + hb(X_k) + \sqrt{h} Z_{k+1} \,.
\end{equation}
We also note that since $U_k$ and $\bar{X}_k$ are independent, (\ref{ina estimatorVariance}) implies 
\begin{equation}\label{ina varianceBound}
\bV[ \bar{b}(\bar{X}_k,U_{k}) | \bar{X}_k] \leq \sigma^2 (1 + |\bar{X}_k|^2) h^{\alpha}
\end{equation}
for any $k \geq 0$. Moreover, note that $\mathbb{E} \left[ |\bar{b}(\bar{X}_k, U_k) |^2 | \bar{X}_k \right] = \bV[ \bar{b}(\bar{X}_k,U_{k}) | \bar{X}_k] + |b(\bar{X}_k)|^2$ and hence (\ref{driftLipschitz}) and (\ref{ina varianceBound}) imply
\begin{equation}\label{ina conditionalSecondMomentBound}
\mathbb{E} \left[ |\bar{b}(\bar{X}_k, U_k) |^2 | \bar{X}_k \right] \leq \sigma^2 (|\bar{X}_k|^2 + 1) h^{\alpha} + 2L^2|\bar{X}_k|^2 + 2|b(0)|^2 
\end{equation}
for any $k \geq 0$. These estimates will be used frequently in the sequel.

 Now let us analyse an example that appears often in the statistics literature, see e.g. \cite{welling2011bayesian, Shamir2016} and the references therein, and explain how Assumption \ref{as ina} can be verified.

\begin{example} \label{ex subsampling}
	Let $(\theta_i)_{i=1,\ldots,m}$ and $\theta_i\in \bR^d$, for all $i$. Moreover, let $U = (U_i)_{i=1\ldots,s}$ be a collection of $s$ independent random variables, uniformly distributed over the set $\{1,\ldots,m\}$.
	We define 
	\begin{equation}\label{subsamplingDrift}
	b(x)= \sum_{i=1}^m \hat{b}(x,\theta_i)\,\,\,\,\text{and}\,\,\,\,\bar{b}(x,U_k)= \frac{m}{s} \sum_{i=1}^s \hat{b}(x,\theta_{U_i})\,.
	\end{equation}
	In applications of Bayesian inference $m$ corresponds to the size of the data set and may be large. Consequently, the generation of $(\bar X_k)_{k=0}^{\infty}$ is costly. One then hopes that a randomisation strategy will reduce the computational cost without introducing significant variance. Notice that
	\begin{align*}
	\bE [\bar{b}(x,U)] = &  \frac{m}{s} \sum_{i=1}^s \bE[  \hat{b}(x,\theta_{U_i}) ]
	= \frac{m}{s} \sum_{i=1}^s \sum_{j=1}^m \hat{b}(x,\theta_{j})\bP(U_i=j) \\
	&  = \frac{1}{s} \sum_{i=1}^s \sum_{j=1}^m \hat{b}(x,\theta_{j})= \sum_{i=1}^m \hat{b}(x,\theta_{i}) = b(x)\,.
	\end{align*}
	On the other hand, we have
	\begin{equation*}
	\begin{split}
	\mathbb{E}|\bar{b}(x,U) - b(x)|^2 &= \mathbb{E} \left|\frac{m}{s}\sum_{i=1}^{s} \hat{b}(x,\theta_{U_i}) - \sum_{i=1}^{m} \hat{b}(x,\theta_i)\right|^2 = \mathbb{E} \left|\frac{1}{s}\sum_{i=1}^{s} \left( m \hat{b}(x,\theta_{U_i}) - \sum_{i=1}^{m} \hat{b}(x,\theta_i) \right) \right|^2 \\
	&= \frac{1}{s^2} \sum_{i=1}^{s} \mathbb{E} \left| m \hat{b}(x,\theta_{U_i}) - \sum_{i=1}^{m} \hat{b}(x,\theta_i) \right|^2 = \frac{1}{s^2} \sum_{i=1}^{s} \sum_{j=1}^{m} \left( m \hat{b}(x, \theta_j) - b(x) \right)^2 \frac{1}{m} \\
	&= \frac{1}{s^2} \left( sm \sum_{j=1}^{m}\hat{b}(x,\theta_j)^2 - sb(x)^2 \right) \,,
	\end{split}
	\end{equation*}
	where we used the fact that $m \hat{b}(x,\theta_{U_i}) - \sum_{i=1}^{m} \hat{b}(x,\theta_i)$ are centered, independent random variables. This implies that if for all $\theta$ and $x$ we have $|\hat{b}(x,\theta)|^2 \leq C(1 + |x|^2)$ with some constant $C > 0$, then condition (\ref{ina estimatorVariance}) can indeed be satisfied.
	Then we have
	\begin{equation*}
	\mathbb{E}|\bar{b}(x,U) - b(x)|^2 \leq \frac{m^2}{s} C(1 + |x|^2)
	\end{equation*}
	and hence in order for (\ref{ina estimatorVariance}) to be satisfied, we need to have $m^2C/s \leq \sigma^2 h^{\alpha}$ for some constants $\sigma$, $\alpha > 0$, which means that we need to choose $s$ and $h$ so that $s^{-1} \lesssim h^{\alpha}$. However, this can give us a constant $\sigma^2$ of order $m^2$, which can be very large in applications. In order to reduce the variance, a sensible choice seems to be to consider subsampling without replacement, see e.g. \cite{Shamir2016}. More precisely, we define an estimator
	\begin{equation*}
	\bar{b}^{wor}(x,U) := \frac{m}{s} \sum_{i=j}^{m} \hat{b}(x,\theta_j)Z_j \,,
	\end{equation*} 
	where $(Z_j)_{j=1}^{m}$ are correlated random variables such that
	$\mathbb{P}(Z_j = 1) = \frac{s}{m}$, $\mathbb{P}(Z_j = 0) = 1 - \frac{s}{m}$ and $\mathbb{P}(Z_i = 1, Z_j = 1) = \binom{m-2}{s-2}/\binom{m}{s}$ 
	for any $i$, $j \in \{ 1, \ldots , m \}$ such that $i \neq j$. Note that this definition of $\bar{b}^{wor}$ corresponds to sampling $s$ terms from the sum defining $b$ without replacement, see e.g. Lemma B in Section 7.3.1 and Problem 7.26 in \cite{rice1988}. It is immediate to check that this estimator of $b$ is indeed unbiased. As for the variance, we have
	\begin{equation*}
	\mathbb{V}\left( \frac{m}{s} \sum_{i=j}^{m} \hat{b}(x,\theta_j)Z_j \right) = \frac{m^2}{s^2} \left( \sum_{j=1}^{m}\hat{b}(x,\theta_j)^2 \mathbb{V}(Z_j) + \sum_{j=1}^{m} \sum_{i \neq j} \hat{b}(x,\theta_j) \hat{b}(x, \theta_i) \operatorname{Cov}(Z_i,Z_j) \right) \,.
	\end{equation*}
	Moreover, it is easy to check that $\operatorname{Cov}(Z_i,Z_j) = \frac{s(s-1)}{m(m-1)} - \frac{s^2}{m^2} = - \frac{s(1-\frac{s}{m})}{m(m-1)}$ and hence
	\begin{equation*}
	\begin{split}
	\mathbb{V}(\bar{b}^{wor}(x,U)) &= \frac{m^2}{s^2} \left( \sum_{j=1}^{m}\hat{b}(x,\theta_j)^2 \frac{s}{m}(1-\frac{s}{m}) - \sum_{j=1}^{m} \sum_{i \neq j} \hat{b}(x,\theta_j) \hat{b}(x, \theta_i) \frac{s(1-\frac{s}{m})}{m(m-1)} \right) \\
	&= \frac{m}{s}(1- \frac{s}{m}) \frac{m}{m-1} \sum_{j=1}^{m} \left( \hat{b}(x,\theta_j) - \frac{b(x)}{m}\right)^2 \,,
	\end{split}
	\end{equation*}
	where the last equality comes from the easily verifiable identity
	\begin{equation*}
	\sum_{j=1}^{m} \left( \hat{b}(x,\theta_j) - \frac{b(x)}{m}\right)^2 = \frac{m-1}{m} \left( \sum_{j=1}^{m} \hat{b}(x,\theta_j)^2 - \frac{1}{m-1} \sum_{j=1}^{m} \sum_{i \neq j} \hat{b}(x,\theta_j) \hat{b}(x,\theta_i) \right) \,.
	\end{equation*}
	Hence we see that the variance of the estimator $\bar{b}^{wor}$ is equal to the variance of $\bar{b}$ multiplied by $(1 - \frac{s}{m})$. Thus, assuming again that for all $\theta$ and $x$ we have $|\hat{b}(x,\theta)|^2 \leq C(1 + |x|^2)$, we see that we now need to have
	\begin{equation*}
	\frac{m^2}{s}(1 - \frac{s}{m})C \leq \sigma^2 h^{\alpha}
	\end{equation*}
	in order for (\ref{ina estimatorVariance}) to hold. Since the left hand side goes to zero when $s$ approaches $m$, this method allows us to choose a much smaller $\sigma$ than in the subsampling with replacement case, if we choose $s$ large enough. 
	
	Another possible way of reducing the variance $\sigma$ is via an appropriate rescaling (time-change) of the SDE (\ref{eq sde}). Namely, it is well-known that for any positive definite symmetric matrix $\Sigma$, the SDE $dY_t = \Sigma b(Y_t) dt + \Sigma^{1/2} dW_t$ has the same invariant measure as (\ref{eq sde}). See e.g.\ \cite{DurmusRobertsVilmartZygalakis2017, Xifara2014} and the references therein for discussions on different choices of $\Sigma$ in Monte Carlo algorithms. Hence, instead of considering the drift $b$ and its estimator $\bar{b}$ given by (\ref{subsamplingDrift}), we can take $b(x) = \frac{1}{m} \sum_{i=1}^m  \hat{b}(x,\theta_i)$ and $\bar{b}(x,U_k)= \frac{1}{s} \sum_{i=1}^s \hat{b}(x,\theta_{U_i})$ and we can consider a Markov chain
	\begin{equation*}
	\bar{X}_{k+1} = \bar{X}_k + h \frac{1}{s} \sum_{i=1}^s \hat{b}(\bar{X}_k,\theta_{U_i}) + \sqrt{h/m} Z_{k+1} \,.
	\end{equation*}
	This corresponds to choosing $\Sigma = (1/m)I$ in the SDE. Intuitively, for $m > 1$ this choice reduces the variance of the algorithm at the cost of slowing down convergence of the Markov chain. In other words, in our bounds in Theorem \ref{mainStochasticGradientTheorem} the term $\bar{C}_2 h^{\alpha/4}$ becomes smaller, whereas $W_2(\cL(X_k), \pi)$ becomes larger (since the contractivity constant $\hat{c}_2$ in Theorem \ref{mainTheoremULA} becomes smaller). A precise analysis of this trade-off falls beyond the scope of the present paper and is left for future work. 
\end{example}

By applying Theorem \ref{theoremPerturbation}, we can prove the following result.

\begin{theorem}\label{theoremSGcomparison}
	Let the assumptions of Theorem \ref{mainContractivityTheorem} and Assumption \ref{as ina} hold. Let $h \in (0, h_0 \wedge \frac{K}{4L^2 + 2\sigma^2} \wedge 1)$. Then for any random variables $X_0$, $\bar{X}_0$ and any $k \geq 1$ we have 
	\begin{equation}\label{SGcomparisonW2}
	W_2(\cL(X_k), \cL(\bar{X}_k)) \leq \left( A (1-ch)^{k}\mathbb{E}f(|X_0 - \bar{X}_0|) \right)^{1/2} + \left( \frac{A C_{Iult}}{c}\right)^{1/2} h^{\alpha/4}
	\end{equation}
	and
	\begin{equation}\label{SGcomparisonW1}
	W_1(\cL(X_k), \cL(\bar{X}_k)) \leq e^{ar_2} (1-ch)^k \mathbb{E}f(|X_0 - \bar{X}_0|) + e^{ar_2} \frac{C_{Iult}}{c} h^{\alpha/2} \,,
	\end{equation}
	where $c$ is given by (\ref{contractivityConstant}), $A$ is given by (\ref{defA}), $C_{Iult}$ is defined in (\ref{defCIult}) and $\alpha$ is specified in condition (\ref{ina estimatorVariance}).
\end{theorem}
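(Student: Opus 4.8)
The plan is to adapt the argument used for Theorem~\ref{theoremULA}, now treating the randomised-drift chain $(\bar X_k)_{k\ge 0}$ as the reference process and the standard Euler chain $(X_k)_{k\ge 0}$ as a process coupled to it. Fix $h$ in the stated range and write $\sqrt{h}Z_{k+1}$ for the common Gaussian increment driving both chains. First I would introduce the auxiliary one-step accurate-drift process $S_{k+1} := \bar X_k + h b(\bar X_k) + \sqrt{h}Z_{k+1}$, which is one step of (\ref{SGEuler}) started at $\bar X_k$, and let $(G_k)_{k\ge 0}$ be given by $G_0 \sim X_0$ and $G_{k+1} := \psi_{m,H}(\bar X_k, G_k, Z_{k+1})$, using the coupling (\ref{ourCoupling}) with $m = \sqrt{h_0}/2$ and $H = r_1$. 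By construction $\cL(G_k) = \cL(X_k)$ for every $k$, so it suffices to bound the moments of $G_k - \bar X_k$; moreover $|S_{k+1} - \bar X_{k+1}| = h\,|\bar b(\bar X_k, U_k) - b(\bar X_k)|$.

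Next I would apply Theorem~\ref{theoremPerturbation} with $Y' = G_{k+1}$, $y = G_k$, $X' = S_{k+1}$, $x = \bar X_k$ and $\widetilde X = \bar X_{k+1}$, obtaining a one-step recursion for $\mathbb{E}f(|G_k - \bar X_k|)$ whose error is governed by the first two moments of $|S_{k+1} - \bar X_{k+1}|$. Since $(U_k)$ is independent of the noise $(Z_j)$ and of the initial data, $U_k$ is independent of the pair $(\bar X_k, G_k)$; conditioning on this pair and invoking (\ref{ina varianceBound}) gives $\mathbb{E}[|S_{k+1} - \bar X_{k+1}|^2 \mid \bar X_k] \le \sigma^2(1 + |\bar X_k|^2) h^{2+\alpha}$ and, by Jensen's inequality, $\mathbb{E}[|S_{k+1} - \bar X_{k+1}| \mid \bar X_k, G_k] \le \sigma(1 + |\bar X_k|^2)^{1/2} h^{1+\alpha/2}$. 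The mixed term $\mathbb{E}[|G_k - \bar X_k|\,|S_{k+1} - \bar X_{k+1}|]$ is then estimated by first conditioning on $(\bar X_k, G_k)$ and afterwards applying the Cauchy--Schwarz inequality, so that everything reduces to uniform second-moment bounds for the two chains.

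Accordingly, the main technical ingredient to establish first is an analogue of Lemma~\ref{lemmaEulerMomentUniformEstimate} for the randomised chain: a constant $C_{IEul}$ with $\mathbb{E}|\bar X_k|^2 \le C_{IEul}$ for all $k$ on the stated $h$-range. This follows the same one-step computation — expanding $\mathbb{E}|\bar X_{k+1}|^2$, using $\mathbb{E}\langle \bar X_k, \bar b(\bar X_k, U_k)\rangle = \mathbb{E}\langle \bar X_k, b(\bar X_k)\rangle \le M_2 - M_1 \mathbb{E}|\bar X_k|^2$ from Lemma~\ref{lemmaContractivityAtInfinityImpliesLyapunov}, and bounding the order-$h^2$ term via (\ref{ina conditionalSecondMomentBound}) — but the additional variance contribution is exactly what forces the extra restriction $h < \frac{K}{4L^2 + 2\sigma^2} \wedge 1$ needed to keep the one-step contraction factor strictly below one. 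Because $\cL(G_k) = \cL(X_k)$, Lemma~\ref{lemmaEulerMomentUniformEstimate} simultaneously gives $\mathbb{E}|G_k|^2 \le C_{Eul}$, hence $\mathbb{E}|G_k - \bar X_k|^2 \le 2C_{Eul} + 2C_{IEul}$. I expect this moment bound, together with the careful use of the independence of $U_k$ from $(\bar X_k, G_k)$, to be the only genuinely new work; the remainder parallels the ULA proof.

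Substituting these estimates into the perturbation inequality, every error term is bounded by a constant multiple of $h^{1+\alpha/2}$ (using $h \le 1$ to absorb the $h^{2+\alpha}$ contribution), so, after collecting all constants into a single $C_{Iult}$ as in (\ref{defCIult}), we obtain $\mathbb{E}f(|G_{k+1} - \bar X_{k+1}|) \le (1-ch)\mathbb{E}f(|G_k - \bar X_k|) + C_{Iult} h^{1+\alpha/2}$. Iterating and summing the geometric series yields $\mathbb{E}f(|G_k - \bar X_k|) \le (1-ch)^k \mathbb{E}f(|G_0 - \bar X_0|) + \frac{C_{Iult}}{c} h^{\alpha/2}$. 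Finally, Lemma~\ref{corollarySquareComparison} converts this into the two claimed bounds: the inequality $r^2 \le A f(r)$ followed by taking square roots gives (\ref{SGcomparisonW2}), while $r \le e^{ar_2} f(r)$ gives (\ref{SGcomparisonW1}), after noting that $W_1$ and $W_2$ between $\cL(X_k) = \cL(G_k)$ and $\cL(\bar X_k)$ are dominated by the corresponding moments of the coupling $(G_k, \bar X_k)$ and that $G_0$ may be taken equal to $X_0$.
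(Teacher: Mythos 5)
Your proposal is correct and follows essentially the same route as the paper's proof: the same auxiliary chain $S_{k+1}=\bar X_k+hb(\bar X_k)+\sqrt{h}Z_{k+1}$, the same coupled process $G_{k+1}=\psi_{m,H}(\bar X_k,G_k,Z_{k+1})$, the application of Theorem \ref{theoremPerturbation}, the uniform second-moment bound for $(\bar X_k)$ (the paper's Lemma \ref{lemmaEulerMajka...}, i.e.\ Lemma \ref{lemmaEulerMomentUniformEstimateInacurate}, which is exactly where the restriction $h<\frac{K}{4L^2+2\sigma^2}\wedge 1$ enters), the iteration giving the $C_{Iult}h^{\alpha/2}$ term, and the passage to $W_1$, $W_2$ via Lemma \ref{corollarySquareComparison}. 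No substantive differences to report.
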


\begin{proof}
We will need to use an auxiliary chain $(S_k)_{k=0}^{\infty}$ that at each step $k$ is moved to the position of $\bar{X}_k$ and then evolves as (\ref{SGEuler}), i.e.,
\begin{equation*}
S_{k+1} := \bar{X}_k + hb(\bar{X}_k) + \sqrt{h}Z_{k+1} \,.
\end{equation*}
We define a new process $(G_k)_{k=0}^{\infty}$ coupled to $(S_k)_{k=0}^{\infty}$ according to the prescription (\ref{ourCoupling}). Using the notation introduced in (\ref{psiCoupling}), we set
\begin{equation}\label{defSGG}
G_{k+1} := \psi_{m,H}(\bar{X}_k,G_k,Z_{k+1}) \,.
\end{equation}
In other words, we have
$G_{k+1} = G_k + hb(G_k) + \sqrt{h}\widetilde{Z}_{k+1}$
for some i.i.d. normal random variables $(\widetilde{Z}_k)_{k=1}^{\infty}$ that are determined via the coupling. Note that we have $\cL(G_k) = \cL(X_k)$ for all $k \geq 1$ if $G_0 = X_0$.

We apply now Theorem \ref{theoremPerturbation} and we have
\begin{equation}\label{SGinequality}
\begin{split}
\mathbb{E}f(|G_{k+1} - \bar{X}_{k+1}|) &\leq (1-ch)\mathbb{E}f(|G_k - \bar{X}_k|) + \mathbb{E} \left[ \frac{1}{r_2}e^{-ar_2} |S_{k+1} - \bar{X}_{k+1}|^2 \right] \\
&+ \mathbb{E} \left[ \left( 1 + \frac{1}{r_2}e^{-ar_2} \left( |G_k - \bar{X}_k|(1+h_0L) + \sqrt{h_0} \right) \right) |S_{k+1} - \bar{X}_{k+1}| \right] \,.
\end{split}
\end{equation}
Note that by (\ref{ina estimatorVariance}) we have $\mathbb{E} |\bar{b}(\bar{X}_k,U_k) - b(\bar{X}_k)|^2 \leq \sigma^2 (1 + \mathbb{E}|\bar{X}_k|^2) h^{\alpha}$ and hence, using Jensen's inequality,
\begin{equation*}
\mathbb{E} |S_{k+1} - \bar{X}_{k+1}| = h \mathbb{E} |b(\bar{X}_k) - b(\bar{X}_k, U_k)| \leq \sigma h^{1 + \alpha/2} (1 + \mathbb{E}|\bar{X}_k|^2)^{1/2} \,.
\end{equation*}
Moreover, we have uniform bounds on the moments of $(G_k)_{k=0}^{\infty}$ due to Lemma \ref{lemmaEulerMomentUniformEstimate}. Hence we only need to control the moments of $(\bar{X}_k)_{k=0}^{\infty}$. To this end, we can repeat the reasoning from the proof of Lemma \ref{lemmaEulerMomentUniformEstimate} to obtain the following result.

\begin{lemma}\label{lemmaEulerMomentUniformEstimateInacurate} 
	Let $(\bar{X}_k)_{k=0}^{\infty}$ be given by (\ref{eq ineuler}). Let $h < h_0 \wedge \frac{K}{4L^2 + 2\sigma^2} \wedge 1$. Then for all $k \geq 1$, under Assumptions \ref{as diss} and \ref{as ina}, we have
	\begin{equation*}
	\mathbb{E}|\bar{X}_k|^2 \leq C_{IEul} \,,
	\end{equation*}
	where
	\begin{equation}\label{defCIEul}
	C_{IEul} := \mathbb{E}|X_0|^2 + \frac{2h_0 |b(0)|^2 + d + M_2 + h_0 \sigma^2}{M_1 - 2h_0 L^2 - h_0 \sigma^2} \,.
	\end{equation}
\end{lemma}

The proof can be found in the Appendix. Now we come back to (\ref{SGinequality}) and we have
\begin{equation*}
\begin{split}
\mathbb{E}f(|G_{k+1} &- \bar{X}_{k+1}|) \leq (1-ch)\mathbb{E} f(|G_k - \bar{X}_k|) + \frac{1}{r_2}e^{-ar_2} \sigma^2 h^{1 + \alpha}(1 + C_{IEul})\\
&+ \mathbb{E} \left( 1 + \frac{1}{r_2}e^{-ar_2} \left( (\sqrt{C_{Eul}} + \sqrt{C_{IEul}})(1+h_0L) + \sqrt{h_0} \right) \right) \sigma h^{1 + \alpha/2} (1 + C_{IEul})^{1/2}\,.
\end{split}
\end{equation*}
Hence if we define
\begin{equation}\label{defCIult}
\begin{split}
C_{Iult} &:= \frac{1}{r_2}e^{-ar_2} \sigma^2 h_0^{\alpha/2} (1 + C_{IEul}) \\
&+ \left( \sigma + \sigma \frac{1}{r_2}e^{-ar_2}(\sqrt{C_{Eul}} + \sqrt{C_{IEul}})(1+h_0L) + \sigma \frac{1}{r_2}e^{-ar_2} \sqrt{h_0} \right) (1 + C_{IEul})^{1/2} \,,
\end{split}
\end{equation}
we get
\begin{equation*}
\mathbb{E}f(|G_{k+1} - \bar{X}_{k+1}|) \leq (1-ch)\mathbb{E} f(|G_k - \bar{X}_k|) + C_{Iult} h^{1+ \alpha/2}
\end{equation*}
and we can finish the proof as in the previous section, obtaining
\begin{equation}\label{SGfinalInequality}
\mathbb{E}f(|G_{k+1} - \bar{X}_{k+1}|) \leq (1-ch)^{k+1}\mathbb{E} f(|G_0 - \bar{X}_0|) + \frac{C_{Iult}}{c} h^{\alpha/2} \,.
\end{equation}
From this (\ref{SGcomparisonW2}) and (\ref{SGcomparisonW1}) follow easily due to Lemma \ref{corollarySquareComparison}.
\end{proof}

\begin{proof}[Proof of Theorem \ref{mainStochasticGradientTheorem}]
	Since $W_2(\cL(\bar{X}_k), \pi) \leq W_2(\cL(\bar{X}_k), \cL(X_k)) + W_2(\cL(X_k), \pi)$ and an analogous inequality holds for the $W_1$ distance, it is easy to see that Theorem \ref{theoremSGcomparison} with $\bar{X}_0 = X_0$ implies (\ref{mainSGW2}) and (\ref{mainSGW1}) with
	\begin{equation*}
	\bar{C}_2 = \left( \frac{A C_{Iult}}{c}\right)^{1/2}\text{ and } \bar{C}_1 = e^{ar_2} \frac{C_{Iult}}{c} \,,
	\end{equation*}
	respectively.
\end{proof}

\begin{remark}\label{remarkPthMoments}
	Note that the bound from Lemma \ref{lemmaEulerMomentUniformEstimateInacurate} can be generalised to hold for all $p$-th moments of $\bar{X}_k$ for $p \geq 1$. More precisely, for any $p \geq 1$ we can prove that under conditions (\ref{eq th weak dissipativity}) and (\ref{driftEstimatorGrowth})
, there exists a constant $C_{IEul}^{(p)} > 0$ such that for sufficiently small $h$ and for all $k \geq 1$ we have
\begin{equation}\label{pthMomentBounds}
\mathbb{E}|\bar{X}_k|^p \leq C_{IEul}^{(p)} \,.
\end{equation}
In order to see this, we first need to show an analogous bound for the Euler scheme (\ref{ULAprototype}) with accurate drift as in Lemma \ref{lemmaEulerMomentUniformEstimate}, under (\ref{eq th weak dissipativity}). This follows e.g.\ from Theorem 2.1 and Remark 2.4 in \cite{SzpruchZhang2018}. Indeed, note that (\ref{eq th weak dissipativity}) implies that for any $p \geq 2$ we have $\langle x , b(x) \rangle |x|^{p-2} \leq M_2|x|^{p-2} - M_1 |x|^p$ and hence for the generator $L$ of the solution to the SDE (\ref{eq sde}) for the function $V(x) = |x|^p$ we can show
\begin{equation*}
\begin{split}
LV(x) &= \langle \nabla V(x) , b(x) \rangle + \frac{1}{2} \Delta V(x) = p |x|^{p-2} \langle x , b(x) \rangle + \frac{1}{2} p (p-1) |x|^{p-2} \\
&\leq p M_2 |x|^{p-2} - pM_1 |x|^p  + \frac{1}{2}p (p-1) |x|^{p-2} \\
&\leq - \left( pM_1 - \varepsilon p M_2 - \frac{1}{2} \varepsilon p (p-1) \right) |x|^p + \left( pM_2 + \frac{1}{2} p (p-1) \right) C(\varepsilon, p) \,,
\end{split}
\end{equation*}
where $\varepsilon > 0$ can be arbitrary and $C(\varepsilon, p) > 0$ is such that $|x|^{p-2} \leq \varepsilon |x|^p + C(\varepsilon, p)$ for all $x \in \mathbb{R}^d$ (i.e., we can choose $C(\varepsilon, p) > \left( \frac{p-2}{\varepsilon p} \right)^{\frac{p-2}{2}} - \varepsilon \left( \frac{p-2}{\varepsilon p}\right)^{\frac{p}{2}}$). Hence, if we choose $\varepsilon$ such that $\varepsilon < M_1 / (M_2 + \frac{1}{2}(p-1))$, we obtain a Lyapunov condition $LV(x) \leq - \rho V(x) + C$ with some positive constants $\rho$, $C > 0$. Observe that Theorem 2.1 and Remark 2.4 in \cite{SzpruchZhang2018} imply that under a stronger condition, namely, $LV(x) \leq - \rho (1 + V(x))$ for all $x \in \mathbb{R}^d$, (cf.\ (2.8) in \cite{SzpruchZhang2018}), we get (\ref{pthMomentBounds}) for Euler schemes with accurate drifts. However, by analysing the proof in \cite{SzpruchZhang2018} it is easy to see that their condition (2.8) can be replaced by the weaker condition $LV(x) \leq - \rho V(x) + C$. Namely, in the last calculation in the proof of Theorem 2.1 in \cite{SzpruchZhang2018}, the inequality $\mathbb{E}(1 + V(\bar{X}_{k+1})) \leq (1 + (-\rho + \tilde{\rho})h) \mathbb{E}(1+V(\bar{X}_k))$ will be then replaced by $\mathbb{E}(1 + V(\bar{X}_{k+1})) \leq (1 + (-\rho + \tilde{\rho})h) \mathbb{E}(1+V(\bar{X}_k)) + (C+\rho)h$. Then by iterating we obtain $\mathbb{E}(1+ V(\bar{X}_{k+1})) \leq e^{(-\rho + \tilde{\rho})(k+1)h} \mathbb{E}(1+V(X_0)) + C+\rho$ and the desired bound for the accurate drift case follows. Now, using (\ref{driftEstimatorGrowth}) and following the argument from the proof of Lemma \ref{lemmaEulerMomentUniformEstimateInacurate}, it is possible to extend this result to Euler schemes with inaccurate drifts. We leave the details to the reader.
\end{remark}

\subsection{Multi-level Monte Carlo}\label{sectionMLMC}

In this section we focus on Euler schemes with inaccurate drifts. However, the reader who is interested only in MLMC in the accurate drift case, can easily recover relevant results by replacing $(\bar{X}_k)_{k=0}^{\infty}$ defined in (\ref{eq ineuler}) with $(X_k)_{k=0}^{\infty}$ defined in (\ref{eq euler}) and Assumptions \ref{as MLMCina} with Assumptions \ref{as diss}. Note that in such a case certain quantities featured below simply vanish, which only makes our calculations easier.

Let us start by briefly explaining the motivation behind considering Monte Carlo estimators of the multi-level type. A typical strategy for approximating $\int_{\bR^d} g(x) \pi(dx)$, is to resort to the standard Monte Carlo estimator where the average is taken  "over the space". More precisely, we fix the time $T = kh$ for some $k \geq 1$, we generate $N$ i.i.d. samples $(\bar{X}_k^i)_{i=1}^{N}$ of $\bar{X}_k$ defined in \eqref{eq ineuler} and compute the Monte Carlo average
\begin{equation} \label{eq mc} \tag{MCA}
\cA^{MCA}(T,h,N)(g):=\frac{1}{N} \sum_{i=1}^N g(\bar{X}^{i}_k)  \,.
\end{equation}
The aim is to find the optimal allocation of the parameters (terminal time $T$, number of MC samples $N$ and the size of the time-step $h$) to achieve required mean-square-error. We can compute 
\begin{align*}
mse(\cA^{MCA}(T,h,N)(g)) :=  &  \left[ \mathbb{E} \left( \int_{\bR^d} g(x) \pi(dx) - \frac{1}{N} \sum_{i=1}^N g(\bar{X}^{i}_{k})  \right)^2 \right]^{1/2}	\\
\leq &    \left[ \left( \int_{\bR^d} g(x) (\pi(dx) - \mu_{kh}(dx))  \right)^2 \right] ^{1/2}   	
+  \left[ \left( \bE[ g(Y_{kh}) ]  - \bE[g( \bar X_{k})]  \right)^2 \right] ^{1/2}    \\ 
& +  \left[ \bE \left( \bE[ g(\bar{X}_k)]   - \frac{1}{N} \sum_{i=1}^N g(\bar{X}^{i}_k)  \right)^2 \right] ^{1/2}  \,,
\end{align*}
where $\mu_{kh} := \cL(Y_{kh})$. The three error terms are: bias (due to finite time simulation) that we can estimate due to \cite{Eberle2016} with explicit constants if $g$ is Lipschitz, i.e., $W_1(\cL{(Y_{kh})}, \pi   ) \leq e^{- \lambda kh }W_1(\cL(Y_0), \pi  )$ or due to \cite{LuoWang} if $g$ has polynomial growth; weak time discretisation error studied in Theorem \ref{th weak} from which we know that $|\bE[ g(Y_{kh}) ]  - \bE[g( \bar X_{k})]| \lesssim  h$,  and the variance of the Monte Carlo estimator that we also control uniformly in time due to Lemma \ref{lemmaEulerMomentUniformEstimateInacurate}. Hence we have
$
\cA^{MCA}(T,h,N) \lesssim e^{{-} \lambda T} + h + 1/\sqrt{N}.
$
We fix $\epsilon >0$ and set 
$
\cA^{MCA}(T,h,N) \lesssim \epsilon.
$
This leads to the following choice of the parameters 
$
T \approx \lambda^{-1} \log{\epsilon}, \quad h \approx {\epsilon}, \quad N \approx \epsilon^{-2}. 
$
The computational cost is then given by
$
cost(\cA^{MCA}(t,h,N)) = T h^{-1} N \approx (\log{\epsilon}) \epsilon^{-3}. 
$ 
The above cost should be compared with $\epsilon^{-2}$ that holds for the MC estimator in the case when we have access to unbiased samples. 

The recently developed MLMC approach, \cite{giles2008multilevel,MR2187308,heinrich2001multilevel}, allows us to reduce the computational cost of \eqref{eq mc}. The idea is to introduce a family of Euler discretisations with varying time-steps. Fix $L>0$. 
For $\ell \in \{ 1, \ldots , L \}$ let us define $h^{\ell} := \tfrac{1}{M^{\ell}}$. In our analysis we consider $M=2$, for simplicity. We define  
\begin{equation}\label{eq lceuler}
\bar X^{\ell}_{(k+1)h^{\ell}} = \bar X^{\ell}_{k h^{\ell}}  +  \bar{b}(\bar X^{\ell}_{k h^{\ell}},U^{\ell}_{k h^{\ell}})h^{\ell} 
+ Z^{\ell}_{k+1} \,,
\end{equation}
where $Z^{\ell}_{k+1} := W_{(k+1)h^{\ell}} - W_{k h^{\ell}}$ and $(U^{\ell}_{k h^{\ell}})_{k=0}^{\infty}$ are mutually independent and such that for any $x \in \mathbb{R}^d$ we have $\mathbb{E}\bar{b}(x, U^{\ell}_{k h^{\ell}}) = b(x)$. For all $l \in \{ 1, \ldots , L \}$ and all $T$ such that $T=k h^{\ell}$ for some $k \geq 1$, we introduce appropriate modifications of \eqref{eq lceuler} denoted by $(\bar{X}^{f,\ell}_T,\bar{X}^{c,\ell}_T)$ such that
$\cL(\bar X^{f,\ell}_{T}) = \cL(\bar X^{c,\ell}_{T}) = \cL (\bar X^{\ell}_{T})$ for $\ell \in \{ 1,\ldots L \}$. Hence we have
\begin{equation} \label{eq telescoping}
\E[g(\bar X^{L}_{T})]	= \E[g(\bar X^0_T)] + \sum_{\ell=1}^{L} \E[ g(\bar X^{f,\ell}_{T}) - g(\bar X^{c,\ell- 1 }_{T}) ] \,.
\end{equation}
This identity leads to the following unbiased estimator of $\E[g(\bar X_T^{L})]$, for $T = k h^L$ with some $k \geq 1$,
\begin{align*} 
\cA^{MLMC}(T,L,(N_{\ell})_{\ell})(g):= \frac{1}{N_0} \sum_{i=1}^{N_0} g( \bar X_T^{i,0}) + \sum_{\ell=1}^{L}\left\{ \frac{1}{N_\ell} \sum_{i=1}^{N_\ell} 
(    g(\bar X_T^{i,f,\ell}) -  g(\bar X_T^{i,c,\ell-1}) )  \right\}\,,	
\end{align*}
where $\bar X_T^{i,f,\ell}$ and $\bar X_T^{i,c,\ell}$ for $i \in \{ 1, \ldots N_{\ell} \}$ are i.i.d. samples of $\bar X_T^{f,\ell}$ and $\bar X_T^{c,\ell}$, respectively. We assume that the samples across the levels (each summand in $\ell$) are independent, hence
\begin{equation}\label{eq varianceDecompositionForMLMC}
\mathrm{Var}(\cA^{MLMC}(T,L,(N_{\ell})_{\ell})(g))  = \frac{1}{N_0} \mathrm{Var}( g( \bar X_T^{1,0}) )
+ \sum_{\ell=1}^{L} \frac{1}{N_\ell} \mathrm{Var}( 
(    g(\bar X_{T}^{i,f,\ell}) -  g(\bar X_{T}^{i,c,\ell-1}) )  \,.
\end{equation}
If the samples at each summand in $\ell$, i.e., $(\bar{X}_T^{i,f,\ell},\bar{X}_T^{i,c,\ell-1})$ are appropriately coupled for each $\ell$, then $\mathrm{Var}(   f(X_T^{i,f,\ell}) -  f(X_T^{i,c,\ell-1})$ decays when $\ell$ increases. As a result MLMC, with optimally selected parameters, combines many simulations on low accuracy grids (at a corresponding low cost), with relatively few simulations computed with high accuracy and high cost on very fine grids. One has the following error decomposition 
\begin{align*}
mse(\cA^{MLMC}(T,L,&(N_{\ell})_{\ell})(g)) =  \left[ \mathbb{E} \left( \int_{\bR^d} g(x) \pi(dx) - \cA^{MLMC}(T,L,(N_{\ell})_{\ell})(g)  \right)^2 \right]^{1/2}	\\
&\leq    \left[ \left( \int_{\bR^d} g(x) (\pi(dx) - \mu_{T}(dx))  \right)^2 \right] ^{1/2}   	
+  \left[ \left( \bE[ g(Y_{T}) ]  - \bE[g(\bar X^{1,L}_{T})]  \right)^2 \right] ^{1/2}     \\
& +  \left[ \mathbb{E} \left( \bE[ g(\bar X^{1,L}_t)]   - \cA^{MLMC}(T,L,(N_{\ell})_{\ell})(g)  \right)^2 \right] ^{1/2}  \,.	
\end{align*}
Note that the first two errors are the same as for the standard MC and the only difference will come from the variance of the MLCM estimator. In order to evoke to the classical Multilevel Monte Carlo complexity analysis, note that we applied telescopic sum estimator to $\bE[ g(Y_{T}) ]$ rather than directly to $\int_{\bR^d} g(x) \pi(dx)$. In view of the analysis for the standard Monte Carlo we choose $T \approx \lambda^{-1} \log{\epsilon}$ and hence we need to multiply the final cost of MLMC by the factor $\log{\epsilon}$. 

Observe that in analysis of an MLMC estimator the crucial part is to investigate the behaviour of the pair of processes $(\bar{X}_{kh^{\ell}}^{f,\ell},\bar{X}_{kh^{\ell}}^{c,\ell-1})_{k=0}^{\infty}$ for any fixed $\ell \in \{ 1, \ldots , L \}$. Hence, to streamline the notation, from now on we drop the superscript $\ell$ and we will work with
\begin{equation}\label{defbarXf}
\bar{X}^f_{(k+1)h} = \bar{X}^f_{kh} + \bar{b}(\bar{X}^f_{kh}, U^f_{kh})h + \sqrt{h} Z_{k+1} \,, \quad
\bar{X}^f_{(k+2)h} = \bar{X}^f_{(k+1)h} + \bar{b}(\bar{X}^f_{(k+1)h}, U^f_{(k+1)h})h + \sqrt{h} Z_{k+2}
\end{equation}
and 
\begin{equation}\label{defbarXc}
\bar{X}^c_{(k+2)h} = \bar{X}^c_{kh} + \bar{b}(\bar{X}^c_{kh}, U^c_{kh})2h + \sqrt{2h} \hat{Z}_{k+2}\,, 
\end{equation}
where $\hat{Z}_{k+2} := (Z_{k+1} + Z_{k+2})/\sqrt{2}$. Thus $(\bar{X}_{kh}^f)_{k=0}^{\infty}$ is a process on a fine grid and $(\bar{X}_{kh}^c)_{k=0}^{\infty}$ is a process on a coarse grid that moves twice less frequently than $(\bar{X}_{kh}^f)_{k=0}^{\infty}$.

For the condition with the telescopic sum (\ref{eq telescoping}) to hold, it is required that for all $k\in \bN$, $\cL(U_{kh}^f)=\cL(U_{kh}^c)$. Taking as an example subsampling considered in Example \ref{ex subsampling}, we see that this condition forces us to take the same number of samples at each step of the algorithm. We also assume that $(U_{kh}^f)_{k=0}^{\infty}$ are mutually independent so that (\ref{eq varianceDecompositionForMLMC}) holds. The random variables $(U_{kh}^c)_{k=0}^{\infty}$ can be chosen as indepedent of $(U_{kh}^f)_{k=0}^{\infty}$, although coupling them in an appropriate way can help to further reduce the variance (see Remark \ref{remarkChoiceOfU}). 

We impose the following assumptions.
\begin{assumption}\label{as ina MLMC Ucoupling}
	We assume that 
	\begin{itemize}
		\item Random variables $(U_{kh}^f)_{k\in\bN}$ and $(U_{kh}^c)_{k\in\bN}$ are such that for all $k\in \bN$, $\cL(U_{kh}^f)=\cL(U_{kh}^c)$.
		\item There are constants $L_u > 0$ and $\alpha_c > 0$ such that for all $x \in \mathbb{R}^d$ we have
		\begin{equation}\label{ina MLMC UcouplingCondition}
		\bE|\bar{b}(x,U_{kh}^f)+\bar{b}(x,U_{(k+1)h}^f) - 2\bar{b}(x,U_{kh}^c)|^2 \leq L_u(1+|x|^2)h^{\alpha_c} \,,\,\,\,\, \forall k\geq 0\,. 
		\end{equation}
	\end{itemize}
\end{assumption}

We have the following result.

\begin{theorem}\label{theoremInaccurateMLMCVarianceBound}
	Suppose Assumptions \ref{as ina}, \ref{as MLMCina} with $\bar{L} = L$, $\bar{K} = K$, $\bar{R} = \mathcal{R}$, and \ref{as ina MLMC Ucoupling} are satisfied. Then there exists a process $(\bar{G}_k)_{k=0}^{\infty}$ such that $\cL(\bar{G}_k) = \cL(\bar{X}^f_{kh})$ for all $k \geq 1$ if $\bar{G}_0 = \bar{X}^f_0$ and we have
	\begin{equation*}
	\mathbb{E}|\bar{G}_k - \bar{X}^c_{kh}|^2 \leq A(1-ch)^k \mathbb{E}f(|\bar{G}_{0} - \bar{X}^c_{0}|) + \frac{A \cdot C_{IMLult}}{c} h^{\min(\alpha_c, 1)/2}
	\end{equation*}
	for all $h \in (0, h_0 \wedge \frac{K}{4L^2 + 2\sigma^2} \wedge 1)$, where the constant $C_{IMLult}$ is defined in (\ref{defCIMLult}), $c$ is given by (\ref{contractivityConstant}) and $A$ is given by (\ref{defA}).
\end{theorem}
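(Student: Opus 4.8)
The plan is to mimic the proofs of Theorems~\ref{theoremULA} and~\ref{theoremSGcomparison}, the new feature being that one coarse step consists of two fine substeps, so an auxiliary chain must be threaded through \emph{both} of them. I would work one coarse block $[2jh,2(j+1)h]$ at a time. Using the transformation $\bar{\psi}_{m,H}$ from~(\ref{couplingForInaccurate}), I set
\begin{equation*}
S_{(2j+1)h} := \bar{X}^c_{2jh} + h\bar{b}(\bar{X}^c_{2jh},U^f_{2jh}) + \sqrt{h}\,Z_{2j+1}, \qquad \bar{G}_{(2j+1)h} := \bar{\psi}_{m,H}(\bar{X}^c_{2jh},\bar{G}_{2jh},U^f_{2jh},Z_{2j+1}),
\end{equation*}
and then, starting the next substep from these points,
\begin{equation*}
S_{2(j+1)h} := S_{(2j+1)h} + h\bar{b}(S_{(2j+1)h},U^f_{(2j+1)h}) + \sqrt{h}\,Z_{2j+2}, \qquad \bar{G}_{2(j+1)h} := \bar{\psi}_{m,H}(S_{(2j+1)h},\bar{G}_{(2j+1)h},U^f_{(2j+1)h},Z_{2j+2}).
\end{equation*}
Crucially $S$ uses exactly the fine randomisations $U^f_{2jh},U^f_{(2j+1)h}$ that also drive $\bar{G}$, so each of the pairs $(S_{(2j+1)h},\bar{G}_{(2j+1)h})$ and $(S_{2(j+1)h},\bar{G}_{2(j+1)h})$ is a coupling of two copies of~(\ref{oneStepEulerInaccurate}) with the same $U$; the marginal-preservation property of the coupling (Theorem~\ref{couplingTheorem}) then gives, by induction on $k$, that $\cL(\bar{G}_k)=\cL(\bar{X}^f_{kh})$ when $\bar{G}_0=\bar{X}^f_0$.

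Next I would run the contraction and perturbation estimates along the block. Theorem~\ref{theoremContractivityForInaccurate} applied to the first substep gives, after taking expectations, $\mathbb{E}f(|\bar{G}_{(2j+1)h}-S_{(2j+1)h}|)\le(1-ch)\mathbb{E}f(|\bar{G}_{2jh}-\bar{X}^c_{2jh}|)$, while the perturbation form of~(\ref{perturbation}) contained in Theorem~\ref{theoremContractivityForInaccurate}, applied to the second substep with $Y'=\bar{G}_{2(j+1)h}$, $X'=S_{2(j+1)h}$, $y=\bar{G}_{(2j+1)h}$, $x=S_{(2j+1)h}$ and $\widetilde{X}=\bar{X}^c_{2(j+1)h}$, gives
\begin{equation*}
\mathbb{E}f(|\bar{G}_{2(j+1)h}-\bar{X}^c_{2(j+1)h}|) \le (1-ch)\,\mathbb{E}f(|\bar{G}_{(2j+1)h}-S_{(2j+1)h}|) + \mathcal{E}_j,
\end{equation*}
where $\mathcal{E}_j$ collects the terms linear and quadratic in $|S_{2(j+1)h}-\bar{X}^c_{2(j+1)h}|$ from~(\ref{perturbation}). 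Combining the two displays yields the one-block recursion $\mathbb{E}f(|\bar{G}_{2(j+1)h}-\bar{X}^c_{2(j+1)h}|)\le(1-ch)^2\mathbb{E}f(|\bar{G}_{2jh}-\bar{X}^c_{2jh}|)+\mathcal{E}_j$.

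To bound $\mathcal{E}_j$ I would use the identity
\begin{equation*}
S_{2(j+1)h}-\bar{X}^c_{2(j+1)h} = h\bigl(\bar{b}(\bar{X}^c_{2jh},U^f_{2jh})+\bar{b}(\bar{X}^c_{2jh},U^f_{(2j+1)h})-2\bar{b}(\bar{X}^c_{2jh},U^c_{2jh})\bigr) + h\bigl(\bar{b}(S_{(2j+1)h},U^f_{(2j+1)h})-\bar{b}(\bar{X}^c_{2jh},U^f_{(2j+1)h})\bigr),
\end{equation*}
where the first term is bounded in $L^2$ by $\sqrt{L_u(1+\mathbb{E}|\bar{X}^c_{2jh}|^2)}\,h^{1+\alpha_c/2}$ via Assumption~\ref{as ina MLMC Ucoupling}, and the second by a multiple of $h^{3/2}$ via~(\ref{ina driftLipschitz}) together with $\mathbb{E}|S_{(2j+1)h}-\bar{X}^c_{2jh}|^2\lesssim h$; here I use uniform-in-time second moment bounds for $\bar{X}^c$ (Lemma~\ref{lemmaEulerMomentUniformEstimateInacurate} on the coarse grid, i.e.\ with step $2h$) and for $\bar{G}$ (the same lemma on the fine grid), and the growth bound $\mathbb{E}|\bar{b}(x,U)|^2\le C(1+|x|^2)$, which follows from~(\ref{ina driftLipschitz}) and Assumption~\ref{as ina} evaluated at $x=0$ with $h\le1$. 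Hence, by Cauchy--Schwarz, $\mathcal{E}_j\le C_{IMLult}\,h^{1+\min(\alpha_c,1)/2}$ for an explicit constant $C_{IMLult}$ (cf.~(\ref{defCIMLult})), the quadratic term being of higher order. Iterating the one-block recursion over $j$ and summing the geometric series (using $1-(1-ch)^2\ge ch$ for $ch\le1$) gives $\mathbb{E}f(|\bar{G}_k-\bar{X}^c_{kh}|)\le(1-ch)^k\mathbb{E}f(|\bar{G}_0-\bar{X}^c_0|)+\frac{C_{IMLult}}{c}h^{\min(\alpha_c,1)/2}$, and the bound $r^2\le Af(r)$ from Lemma~\ref{corollarySquareComparison} finishes the proof.

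The main obstacle is the bookkeeping imposed by the two time grids: the auxiliary chain must share the fine randomisations with $\bar{G}$, so that $\bar{G}$ keeps the law of $\bar{X}^f_{kh}$ and the two substep contractions chain into a factor $(1-ch)^2$, yet the residual discrepancy against the coarse chain is precisely the \emph{combined} two-substep quantity $\bar{b}(\cdot,U^f_{2jh})+\bar{b}(\cdot,U^f_{(2j+1)h})-2\bar{b}(\cdot,U^c_{2jh})$ that Assumption~\ref{as ina MLMC Ucoupling} controls --- there is no single-step simplification available, which is exactly why the two fine steps have to be treated jointly, and also why the coarse chain $\bar{X}^c$ (which updates only at the even fine times) cannot simply be used in place of $S$. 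A secondary point of care is that the coarse-grid moment estimate is required with a larger step size ($2h$) than in the fine-grid lemmas.
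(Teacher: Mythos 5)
Your proposal is correct and follows essentially the same route as the paper: the same auxiliary chain $S$ restarted at the coarse position at the start of each block and driven by the fine randomisations, the same coupling map $\bar{\psi}_{m,H}$ producing $\bar{G}$ with the fine-chain law, the same decomposition of $S_{2(j+1)h}-\bar{X}^c_{2(j+1)h}$ into the combined two-substep randomisation term (controlled by Assumption \ref{as ina MLMC Ucoupling}) plus a Lipschitz term of order $h^{3/2}$, the same uniform moment bounds on the coarse ($2h$) and fine grids, and the same final passage through Lemma \ref{corollarySquareComparison}. The only difference is presentational — you chain the first-substep contraction and the second-substep perturbation in the opposite order from the paper — which does not change the argument.
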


This result allows us to construct an MLMC estimator with $L$ levels and good variance bounds by using couplings $(\bar{G}_k, \bar{X}^c_{kh})_{k=0}^{\infty}$. Namely, we start by considering the coarsest level $\ell = 0$, we take $\bar{X}^c_{kh} = \bar{X}^0_{k}$ and define the process $(\bar{G}_k)_{k=0}^{\infty}$ as explained in the proof of Theorem \ref{theoremInaccurateMLMCVarianceBound}, which corresponds to the finer level $\ell = 1$. Then we treat thus obtained $(\bar{G}_k)_{k=0}^{\infty}$ as a new coarse process and we repeat our procedure $L-1$ times.

\begin{remark}\label{remarkChoiceOfU}
	Assumptions \ref{as ina MLMC Ucoupling} can be easily satisfied by choosing the random variables $U^f_{kh}$, $U^f_{(k+1)h}$ and $U^c_{kh}$ independently and then using condition (\ref{ina estimatorVariance}) to verify (\ref{ina MLMC UcouplingCondition}). Then we obtain $\alpha_c = \alpha$.
	However, we would like to point out that such an approach to getting a bound on the variance of our MLMC estimator is not necessarily optimal. We believe that using more involved couplings of $U^f_{(k+1)h}$, $U^f_{kh}$ and $U^c_{kh}$ could lead to $\alpha_c > \alpha$ and hence to an improvement of the rate that we obtain above. We will consider this problem in our future research.
\end{remark}

\begin{proof}[Proof of Theorem \ref{theoremMainMLMC}]
 It is shown in Giles \cite{Giles2015Acta} that under the 
assumptions
\begin{eqnarray}
\label{as mlmc}
\bigl|\E[g(Y_T)- g(\bar X_T^{1,\ell}) ]|\lesssim  (h^{\ell})^{\widetilde{\alpha}},\quad 
\mathrm{Var}[g(\bar X_T^{1,\ell}) -g(\bar X_T^{1,\ell-1}) ]\lesssim  (h^{\ell})^{\beta}, 
\end{eqnarray}
for some \(\widetilde{\alpha}\geq 1/2,\) \(\beta>0,\) the computational complexity of the resulting multi-level estimator with  accuracy \(\varepsilon\)  is proportional to 
\begin{eqnarray*}
	\mathcal{C}=
	\begin{cases}
		\varepsilon^{-2}, & \beta>\gamma, \\
		\varepsilon^{-2}\log^2(\varepsilon), & \beta=\gamma, \\
		\varepsilon^{-2-(1-\beta)/\widetilde{\alpha}}, & 0<\beta <\gamma
	\end{cases}
\end{eqnarray*}
where the cost of sampling at level $\ell$ is of order $h^{-\ell \gamma}$. 
 Theorem \ref{th weak} tells us that in our case $\widetilde{\alpha}=1$  while Theorem \ref{theoremInaccurateMLMCVarianceBound} with $\bar{G}_0=\bar{X}^c_0$ gives $\beta = \min(\alpha_c, 1)/2$ since $g$ is Lipschitz. The cost of sampling at level $\ell$ is proportional to $(h^{\ell})^{-1}$, hence $\gamma=1$. Hence an overall cost of approximating $\bE[ g(Y_{T}) ]$ is of order $\epsilon^{-2-(1-\min(\alpha_c, 1)/2)}$. Consequently, the cost of approximating $\int_{\bR^d} g(x) \pi(dx)$ is $\epsilon^{-2-(1-\min(\alpha_c, 1)/2)}log{\epsilon}$. In particular, if we verify condition (\ref{ina MLMC UcouplingCondition}) by using (\ref{ina estimatorVariance}) as explained in Remark \ref{remarkChoiceOfU}, we have $\alpha_c = \alpha$ and the cost is $\epsilon^{-2-(1-\min(\alpha, 1)/2)}log{\epsilon}$. Hence the best gain that we can get is when we manage to choose $\alpha_c = 1$ (or $\alpha = 1$), which gives us $\beta = 1/2$ and the cost $\epsilon^{-5/2}log{\epsilon}$, which is a half order of magnitude better than the standard Monte Carlo approach.
 \end{proof}

\begin{remark}
	In \cite{szpruch2016multi}, authors considered a telescopic sum in two parameters, time-discretisation and time $t$ (length of the chain). We could also apply this idea in the current setting, which would lead to the reduction of overall cost by $\log{\epsilon}$.    
\end{remark}

\begin{remark}
As our primary interest in the present paper was to study the general randomised (inaccurate) Euler scheme, we refrain from comparing the complexity of the subsampling algorithms described in Example \ref{ex subsampling} with their multi-level counterparts. In that example one would need to take into consideration additional cost of simulating a step of Euler scheme ($m$ for accurate gradient and $s$ for inaccurate gradient). We leave the details to the interested reader.
\end{remark}

\subsubsection{Variance bound for the inaccurate drift case}\label{sectionVarianceBoundInaccurateMLMC}

\begin{proof}[Proof of Theorem \ref{theoremInaccurateMLMCVarianceBound}]
We consider the processes $(\bar{X}^f_{kh})_{k=0}^{\infty}$ and $(\bar{X}^c_{2kh})_{k=0}^{\infty}$ defined via (\ref{defbarXf}) and (\ref{defbarXc}), respectively.
We will obtain a bound on the variance by applying Theorem \ref{theoremContractivityForInaccurate} to $\bar{X}^f_{(k+2)h}$ and $\bar{X}^c_{(k+2)h}$. We consider the process $(\bar{S}_{kh})_{k=0}^{\infty}$ defined for $k \in 2 \mathbb{N}$ by
\begin{equation*}
\begin{split}
\bar{S}_{(k+1)h} &= \bar{X}^c_{kh} + h\bar{b}(\bar{X}^c_{kh}, U^f_{kh}) + \sqrt{h}Z_{k+1} \\
\bar{S}_{(k+2)h} &= \bar{S}_{(k+1)h} + h\bar{b}(\bar{S}_{(k+1)h}, U^f_{(k+1)h}) + \sqrt{h}Z_{k+2} \,.
\end{split}
\end{equation*}
We also need the process $(\bar{G}_k)_{k=0}^{\infty}$ coupled to $(\bar{S}_{kh})_{k=0}^{\infty}$ via
\begin{equation}\label{iMLMCdefbarG}
\bar{G}_{k+1} := \bar{\psi}_{m,H}(\bar{G}_k, \bar{S}_{kh}, U^f_{kh}, Z_{k+1}) \,.
\end{equation}
for all $k \in \mathbb{N}$, where we use the notation from (\ref{couplingForInaccurate}).
Then we have $\cL(\bar{G}_{k}) = \cL(\bar{X}^f_{kh})$ for all $k \geq 1$ if $\bar{G}_0 = \bar{X}^f_0$. Applying Theorem \ref{theoremContractivityForInaccurate}, we obtain
\begin{equation*}
\begin{split}
\mathbb{E}f(|\bar{G}_{k+2} &- \bar{X}^c_{(k+2)h}|) \leq (1-ch)\mathbb{E}f(|\bar{G}_{k+1} - \bar{S}_{(k+1)h}|) + \mathbb{E} \left[ \frac{1}{r_2}e^{-ar_2} |\bar{S}_{(k+2)h} - \bar{X}^c_{(k+2)h}|^2 \right] \\
&+ \mathbb{E} \left[ \left( 1 + \frac{1}{r_2}e^{-ar_2} \left( |\bar{G}_{k+1} - \bar{S}_{(k+1)h}|(1+h_0L) + \sqrt{h_0} \right) \right) |\bar{S}_{(k+2)h} - \bar{X}^c_{(k+2)h}| \right] \,.
\end{split}
\end{equation*}
Note now that
\begin{equation*}
\begin{split}
\mathbb{E} &\left[ \left( 1 + \frac{1}{r_2}e^{-ar_2} \left( |\bar{G}_{k+1} - \bar{S}_{(k+1)h}|(1+h_0L) + \sqrt{h_0} \right) \right) |\bar{S}_{(k+2)h} - \bar{X}^c_{(k+2)h}| \right] \\
&= \left( 1 + \frac{1}{r_2}e^{-ar_2}\sqrt{h_0} \right) \mathbb{E} |\bar{S}_{(k+2)h} - \bar{X}^c_{(k+2)h}| \\
&+ \frac{1}{r_2}e^{-ar_2}(1+h_0L) \mathbb{E} \left[ |\bar{G}_{k+1} - \bar{S}_{(k+1)h}| \cdot |\bar{S}_{(k+2)h} - \bar{X}^c_{(k+2)h}| \right] =: I_1 + I_2
\end{split}
\end{equation*}
and we have
\begin{equation*}
I_2 \leq \frac{1}{r_2}e^{-ar_2}(1+h_0L) \left( \mathbb{E} |\bar{G}_{k+1} - \bar{S}_{(k+1)h}|^2 \right)^{1/2} \cdot \left( \mathbb{E} |\bar{S}_{(k+2)h} - \bar{X}^c_{(k+2)h}|^2 \right)^{1/2} \,.
\end{equation*}
Now we use Minkowski's inequality and Lemma \ref{lemmaEulerMomentUniformEstimateInacurate} to estimate
\begin{equation*}
\left( \mathbb{E} |\bar{G}_{k+1} - \bar{S}_{(k+1)h}|^2 \right)^{1/2} \leq \left( \mathbb{E} |\bar{G}_{k+1}|^2 \right)^{1/2} + \left( \mathbb{E} |\bar{S}_{(k+1)h}|^2 \right)^{1/2} \leq \sqrt{C_{IEul}} + \left( \mathbb{E} |\bar{S}_{(k+1)h}|^2 \right)^{1/2} \,.
\end{equation*}
Moreover, due to (\ref{ina estimatorVariance}) we have
\begin{equation*}
\mathbb{E}|\bar{b}(\bar{X}^c_{kh}, U^f_{kh})|^2 \leq \sigma^2(\mathbb{E}|\bar{X}^c_{kh}|^2 + 1)h^{\alpha} + 2L^2 \mathbb{E}|\bar{X}^c_{kh}|^2 + 2|b(0)|^2 \leq C^{(2h)}_{IEul} (\sigma^2 + 2L^2) + \sigma^2 + 2|b(0)|^2 
\end{equation*}
where we used $h^{\alpha} \leq 1$ and we have the constant
\begin{equation*}
C^{(2h)}_{IEul} := \mathbb{E}|\bar{X}^c_0|^2 + \frac{4h_0 |b(0)|^2 + d + M_2 + 2h_0 \sigma^2}{M_1 - 4h_0 L^2 - 2h_0 \sigma^2}
\end{equation*}
which is specified as in (\ref{defCIEul}) but with $h$ replaced by $2h$. If we denote
\begin{equation*}
C_{ISM} := C^{(2h)}_{IEul} (\sigma^2 + 2L^2) + \sigma^2 + 2|b(0)|^2 
\end{equation*}
then we get
\begin{equation*}
\begin{split}
\left( \mathbb{E} |\bar{S}_{(k+1)h}|^2 \right)^{1/2} &= \left( \mathbb{E} |\bar{X}^c_{kh} + h\bar{b}(\bar{X}^c_{kh}, U^f_{kh}) + \sqrt{h}Z_{k+1}|^2 \right)^{1/2} \\
&\leq \left( \mathbb{E} |\bar{X}^c_{kh}|^2 \right)^{1/2} + \left(h^2  \mathbb{E} |\bar{b}(\bar{X}^c_{kh}, U^f_{kh})|^2 \right)^{1/2} + \left( h \mathbb{E} |Z_{k+1}|^2 \right)^{1/2} \\
&\leq \sqrt{C^{(2h)}_{IEul}} + h_0 \sqrt{C_{ISM}} + \sqrt{h_0 d} \,.
\end{split}
\end{equation*}
Hence, denoting $C_{IASP} := \left( \sqrt{C^{(2h)}_{IEul}} + h_0 \sqrt{C_{ISM}} + \sqrt{h_0 d} \right)^2$ we obtain
\begin{equation*}
I_2 \leq \frac{1}{r_2}e^{-ar_2}(1+h_0L)(\sqrt{C_{IEul}} + \sqrt{C_{IASP}}) \left( \mathbb{E} |\bar{S}_{(k+2)h} - \bar{X}^c_{(k+2)h}|^2 \right)^{1/2} \,.
\end{equation*}
Now we estimate
\begin{equation*}
\begin{split}
\mathbb{E}&|\bar{S}_{(k+2)h} - \bar{X}^c_{(k+2)h}|^2 = \mathbb{E}|\bar{X}^c_{kh} + h \bar{b}(\bar{X}^c_{kh}, U^f_{kh}) + \sqrt{h}Z_{k+1} \\
&+ h \bar{b}(\bar{X}^c_{kh} + h \bar{b}(\bar{X}^c_{kh}, U^f_{kh}) + \sqrt{h}Z_{k+1}, U^f_{(k+1)h})
+ \sqrt{h}Z_{k+2} - \bar{X}^c_{kh} - 2h \bar{b}(\bar{X}^c_{kh}, U^c_{kh}) - \sqrt{2h} \hat{Z}_{k+2}|^2 \\
&\leq 2 \mathbb{E}| h \bar{b}(\bar{X}^c_{kh}, U^f_{kh}) +  h \bar{b}(\bar{X}^c_{kh}, U^f_{(k+1)h}) - 2h \bar{b}(\bar{X}^c_{kh}, U^c_{kh})|^2 \\
&+ 2\mathbb{E}|h \bar{b}(\bar{X}^c_{kh} + h \bar{b}(\bar{X}^c_{kh}, U^f_{kh}) + \sqrt{h}Z_{k+1}, U^f_{(k+1)h})  -   h \bar{b}(\bar{X}^c_{kh}, U^f_{(k+1)h})|^2 =: \tilde{I}_1 + \tilde{I}_2 \,.
\end{split}
\end{equation*}
Note that due to (\ref{ina driftLipschitz}) we have
\begin{equation*}
\tilde{I}_2 \leq 2h^2 \bar{L}^2 \mathbb{E}|h \bar{b}(\bar{X}^c_{kh}, U^f_{kh}) + \sqrt{h}Z_{k+1}|^2 \leq 4h^4 \bar{L}^2 C_{ISM} + 4h^3 \bar{L}^2 d \,.
\end{equation*}
and due to (\ref{ina MLMC UcouplingCondition}) we have
\begin{equation*}
\tilde{I}_1 \leq 2h^2 L_u(1 + \mathbb{E}|\bar{X}^c_{kh}|^2) h^{\alpha_c} \leq 2 L_u (1 + C^{(2h)}_{IEul}) h^{2 + \alpha_c} \,.
\end{equation*}
If we denote
\begin{equation*}
C_{IMLdif} := (4h_0 \bar{L}^2 C_{ISM} + 4\bar{L} d)h_0^{(1-\alpha_c)^{+}} + 2L_u(1+C^{(2h)}_{IEul}) \,,
\end{equation*}
then we see that $\mathbb{E}|\bar{S}_{(k+2)h} - \bar{X}^c_{(k+2)h}|^2 \leq C_{IMLdif} h^{2 + \min(\alpha_c, 1)}$ and we have
\begin{equation*}
I_2 \leq \frac{1}{r_2}e^{-ar_2}(1+h_0L)(\sqrt{C_{IEul}} + \sqrt{C_{IASP}})\sqrt{C_{IMLdif}} h^{1 + \min(\alpha_c, 1)/2}
\end{equation*}
and
\begin{equation*}
I_1 \leq \left( 1 + \frac{1}{r_2}e^{-ar_2}\sqrt{h_0} \right) \sqrt{C_{IMLdif}} h^{1 + \min(\alpha_c, 1)/2} \,,
\end{equation*}
This gives us
\begin{equation*}
\mathbb{E}f(|\bar{G}_{k+2} - \bar{X}^c_{(k+2)h}|) \leq (1-ch)\mathbb{E}f(|\bar{G}_{k+1} - \bar{S}_{(k+1)h}|) + C_{IMLult} h^{1 + \min(\alpha_c, 1)/2} \,,
\end{equation*}
where
\begin{equation}\label{defCIMLult}
\begin{split}
C_{IMLult} &:= \frac{1}{r_2}e^{-ar_2}C_{IMLdif} h_0^{1 + \min(\alpha_c , 1)/2} + \left( 1 + \frac{1}{r_2}e^{-ar_2}\sqrt{h_0} \right) \sqrt{C_{IMLdif}} \\
&+  \frac{1}{r_2}e^{-ar_2}(1+h_0L)(\sqrt{C_{IEul}} + \sqrt{C_{IASP}})\sqrt{C_{IMLdif}} \,.
\end{split}
\end{equation}
Now we just apply the contractivity result for processes with inaccurate drifts (Theorem \ref{theoremContractivityForInaccurate}) to $\bar{G}_{k+1}$ and $\bar{S}_{(k+1)h}$ (note that they are appropriately coupled due to (\ref{iMLMCdefbarG})) and we obtain
\begin{equation*}
\mathbb{E}f(|\bar{G}_{k+2} - \bar{X}^c_{(k+2)h}|) \leq (1-ch)^2\mathbb{E}f(|\bar{G}_{k} - \bar{S}_{(k)h}|) + C_{IMLult} h^{1 + \min(\alpha_c, 1)/2} \,.
\end{equation*}
Hence for all $k \in 2\mathbb{N}$ we obtain
\begin{equation*}
\begin{split}
\mathbb{E}f(|\bar{G}_{k} - \bar{X}^c_{kh}|) &\leq (1-ch)^k\mathbb{E}f(|\bar{G}_{0} - \bar{X}^c_{0}|) + \sum_{j=0}^{k-1} (1-ch)^j C_{IMLult} h^{1 + \min(\alpha_c, 1)/2}\\
&\leq (1-ch)^k\mathbb{E}f(|\bar{G}_{0} - \bar{X}^c_{0}|) + \frac{C_{IMLult}}{c}h^{\min(\alpha_c, 1)/2} \,.
\end{split}
\end{equation*}
Using Lemma \ref{corollarySquareComparison} we obtain
\begin{equation*}
\mathbb{E}|\bar{G}_k - \bar{X}^c_{kh}|^2 \leq A(1-ch)^k \mathbb{E}f(|\bar{G}_{0} - \bar{X}^c_{0}|) + \frac{AC_{IMLult}}{c} h^{\min(\alpha_c, 1)/2} 
\end{equation*}
with the constant $A$ given by (\ref{defA}), which finishes the proof.
\end{proof}

\section{Proof of the contractivity result}\label{sectionProofContractivity}

As we have already remarked in Section \ref{sectionCoupling}, we only need to prove Theorem \ref{theoremContractivityForInaccurate} and hence we will be working here with inaccurate drifts with randomness induced by a random variable $U$ independent of $Z$. Recall that we have $\hat{x} = x + h \bar{b}(x,U)$ and $\hat{y} = y + h \bar{b}(y,U)$.

Note that some parts of this proof (Lemmas \ref{lemmaCouplingFirstMoment}, \ref{lemmaLowerBound} and Subsection \ref{subsectionSmallEstimates}) are based on Section 6 in \cite{EberleMajka2018}. We include all the details here for the reader's convenience and in order to highlight the modifications that need to be introduced in the proofs from \cite{EberleMajka2018} in order to obtain $L^2$ bounds. The reader who is only interested in Euler schemes with non-random drifts can obtain a direct proof of Theorem \ref{mainContractivityTheorem} from the reasoning below by setting $\hat{x} = x + hb(x)$ and replacing $\mathbb{E}[\, \cdot \, | \, U]$ with $\mathbb{E}[\, \cdot \,]$ and the assumptions (\ref{ina driftLipschitz}) and (\ref{ina contractivity}) with (\ref{driftLipschitz}) and (\ref{driftDissipativityAtInf}), respectively.

We denote $R' := |X' - Y'|$, $\hat{r} := |\hat{x} - \hat{y}|$ and $r := |x - y|$. We want to show that for any $r \in [0,\infty)$ we have
\begin{equation}\label{proofContractivity}
\mathbb{E}\left[ f(R') \, | \, U \right] - f(r) \leq - chf(r) \text{ a.s.}
\end{equation}
Before we proceed with the detailed proof, let us sketch some main ideas and formulate two crucial lemmas. 

We proceed by decomposing the Euler scheme step $r \mapsto R'$ into the drift step $r \mapsto \hat{r}$ and the Gaussian step $\hat{r} \mapsto R'$. We have
$\mathbb{E}\left[ f(R') \, | \, U \right] - f(r) = \mathbb{E}\left[ f(R') \, | \, U \right] - f(\hat{r}) + f(\hat{r}) - f(r)$ and we will want to use either $\mathbb{E}\left[ f(R') \, | \, U \right] - f(\hat{r})$ or $f(\hat{r}) - f(r)$ (depending on the values of $r$ and $\hat{r}$) to get an upper bound of the form $- ch f(r)$.

The main idea is that whenever we are in the region of space where the contractivity condition (\ref{ina contractivity}) holds, we should use the drift step and switch off the Gaussian movement by applying the synchronous coupling. On the other hand, when the contractivity condition does not work, we have to use an appropriate coupling to get the desired upper bounds from the Gaussian step, while controlling the drift step via the Lipschitz condition (\ref{ina driftLipschitz}).

One of the most important parts of the proof is an application of the Taylor formula to obtain
\begin{equation*}
f(R') - f(\hat{r}) = f'(\hat{r})(R' - \hat{r}) + f''(\theta)(R' - \hat{r})^2 
\end{equation*}
for some $\theta$ between $\hat{r}$ and $R'$. Hence we see that our crucial task is to control the first and the second moment of $R' - \hat{r}$. Actually, we will be able to consider the second moment only when $R'$ is restricted to a specific interval. In fact we will choose random (only through dependence on $U$) intervals
\begin{equation}\label{choiceOfIntervals}
I_{\hat{r}} = \begin{cases}
(0, \hat{r} + \sqrt{h}) & \text{ if } \hat{r} \leq \sqrt{h} \\
(\hat{r} - \sqrt{h}, \hat{r}) & \text{ if } \hat{r} > \sqrt{h} \,.
\end{cases}
\end{equation}
For such intervals, we will obtain the random variable $\underline{\alpha}(\hat{r})$, for which we have
\begin{equation*}
\underline{\alpha}(\hat{r}) \leq \mathbb{E}\left[(R' - \hat{r})^2 \mathbf{1}_{\{ R' \in I_{\hat{r}} \}} \, | \, U \right] \text{ a.s.}
\end{equation*}
The reason for this choice of intervals $I_{\hat{r}}$ will become apparent from the proof of Lemma \ref{lemmaLowerBound}. Generally speaking, we want to choose a small interval around $\hat{r}$ and it is convenient to take $(\hat{r} - \sqrt{h}, \hat{r})$ for getting bounds on $\sup f''$ on $[0,r_1]$. However, we cannot take just $(0,\hat{r})$ if $\hat{r} \leq \sqrt{h}$ as the length of such an interval would not have a uniform lower bound and it would be impossible to get the lower bound in Lemma \ref{lemmaLowerBound}, hence we need to take $(0, \hat{r} + \sqrt{h})$ when $\hat{r}$ is small. 

From now on, all the relations between random variables in the proof are supposed to be understood as holding almost surely. Let us first formulate two auxiliary results on our coupling, which allow us to control the conditional moments of $R' - \hat{r}$.

\begin{lemma}\label{lemmaCouplingFirstMoment}
	For the coupling $(X',Y')$ defined by (\ref{couplingForInaccurate}), we have
	$\mathbb{E}\left[ R' \, | \, U \right] = \hat{r}$.
\end{lemma}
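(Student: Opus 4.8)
The plan is to read off the answer from the geometry of the coupling. Fix a realisation of $U$, so that $\hat{x}=x+h\bar{b}(x,U)$ and $\hat{y}=y+h\bar{b}(y,U)$ are deterministic; if $\hat{r}=0$ then $\hat{x}=\hat{y}$ and one checks directly from (\ref{couplingForInaccurate}) that $Y'=X'$ in every branch, so $R'=0=\hat{r}$, and we may assume $\hat{r}>0$. Set $e:=(\hat{x}-\hat{y})/\hat{r}$ and $c:=\tfrac12\langle e,\hat{x}+\hat{y}\rangle$. The key geometric claim is that, conditionally on $U$, we have a.s.
\begin{equation*}
X'-Y'=\langle e,X'-Y'\rangle\,e\qquad\text{with}\qquad\langle e,X'-Y'\rangle\ge 0 .
\end{equation*}
Granting this, $R'=|X'-Y'|=\langle e,X'-Y'\rangle$, and by linearity of the conditional expectation $\mathbb{E}[R'\,|\,U]=\langle e,\mathbb{E}[X'\,|\,U]-\mathbb{E}[Y'\,|\,U]\rangle=\langle e,\hat{x}-\hat{y}\rangle=\hat{r}$, where I use $\mathbb{E}[X'\,|\,U]=\hat{x}$ (since $Z$ is independent of $U$ with $\mathbb{E}Z=0$) and $\mathbb{E}[Y'\,|\,U]=\hat{y}$ (conditionally on $U$, $Y'\sim N(\hat{y},hI)$ by Theorem \ref{couplingTheorem}).

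To prove the geometric claim I would inspect the three branches of (\ref{couplingForInaccurate}) in turn. In the synchronous branch $X'-Y'=\hat{x}-\hat{y}=\hat{r}e$, which has the required form; in the coalescence branch $X'-Y'=0$. In the reflection branch $Y'=\hat{y}+R_{\hat{x},\hat{y}}\sqrt{h}Z$, and since $R_{\hat{x},\hat{y}}v=v-2\langle e,v\rangle e$,
\begin{equation*}
X'-Y'=(\hat{x}-\hat{y})+2\sqrt{h}\langle e,Z\rangle\,e=\bigl(\hat{r}+2\sqrt{h}\langle e,Z\rangle\bigr)e=2\bigl(\langle e,X'\rangle-c\bigr)e ,
\end{equation*}
using $\langle e,X'\rangle=\langle e,\hat{x}\rangle+\sqrt{h}\langle e,Z\rangle$. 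So everything reduces to showing that the reflection branch forces $\langle e,X'\rangle\ge c$.

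This is the main obstacle, and I would handle it by unpacking the (truncated) maximal-coupling prescription. Because $\hat{x}-\hat{y}$ is parallel to $e$, the components of $X'-\hat{x}$ and $X'-\hat{y}$ orthogonal to $e$ coincide, which gives the identity
\begin{equation*}
|X'-\hat{y}|^{2}-|X'-\hat{x}|^{2}=\langle e,X'-\hat{y}\rangle^{2}-\langle e,X'-\hat{x}\rangle^{2}=2\hat{r}\bigl(\langle e,X'\rangle-c\bigr) .
\end{equation*}
Hence $\langle e,X'\rangle\ge c$ is equivalent to $|X'-\hat{y}|\ge|X'-\hat{x}|$. On the reflection event $|\sqrt{h}Z|<m$, so $\varphi^{m}_{\hat{x},hI}(X')=\varphi_{\hat{x},hI}(X')>0$; now either $|X'-\hat{y}|>m$, in which case $|X'-\hat{y}|>m>|X'-\hat{x}|$ at once, or $|X'-\hat{y}|\le m$, in which case $\varphi^{m}_{\hat{y},hI}(X')=\varphi_{\hat{y},hI}(X')$ and the defining inequality $\zeta>\bigl(\varphi^{m}_{\hat{y},hI}(X')\wedge\varphi^{m}_{\hat{x},hI}(X')\bigr)/\varphi^{m}_{\hat{x},hI}(X')$ is only possible when $\varphi_{\hat{y},hI}(X')<\varphi_{\hat{x},hI}(X')$, i.e.\ $|X'-\hat{y}|>|X'-\hat{x}|$ (otherwise the ratio equals $1\ge\zeta$). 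In either case $|X'-\hat{y}|\ge|X'-\hat{x}|$, so $\langle e,X'\rangle\ge c$ and the reflection step produces a nonnegative multiple of $e$, as claimed. The point is that the maximal coupling is arranged so that reflection is triggered only after $X'$ has landed on the $\hat{x}$-side of the separating hyperplane, so reflecting never overshoots; the truncation by $m$ only adds the bookkeeping of the sub-case $|X'-\hat{y}|>m$.
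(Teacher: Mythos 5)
Your proof is correct, but it follows a genuinely different route from the paper. The paper's proof also reduces to the one-dimensional projection along $e=(\hat{x}-\hat{y})/\hat{r}$, but then it writes $\mathbb{E}[R'\,|\,U]-\hat{r}$ explicitly as a sum of four one-dimensional Gaussian integrals (two for the reflection event, two for the coalescence event, each weighted by the acceptance ratio $\varphi^m_{\hat r,h}/\varphi^m_{0,h}$) and verifies by the substitution $u=t-\hat r$ that everything cancels to zero. You avoid all integral computations: you show that in every branch of (\ref{couplingForInaccurate}) the difference $X'-Y'$ is a \emph{nonnegative} multiple of $e$ — the only nontrivial case being reflection, where you correctly argue that (a.s.) reflection is triggered only when $|X'-\hat y|\geq |X'-\hat x|$, i.e.\ only when $X'$ lies on the $\hat x$-side of the bisecting hyperplane, so that $\hat r+2\sqrt h\langle e,Z\rangle\geq 0$ — and then conclude by linearity, $\mathbb{E}[R'\,|\,U]=\langle e,\mathbb{E}[X'-Y'\,|\,U]\rangle=\hat r$, using $\mathbb{E}[Y'\,|\,U]=\hat y$ from the conditional version of Theorem \ref{couplingTheorem}. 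This is a legitimate, and arguably more conceptual, argument; note that it is not circular, since the marginal-law statement is proved independently in the appendix, and your nonnegativity claim is exactly what justifies the paper's unstated step of dropping the absolute value in $R'=\hat r+2\sqrt h\langle e,Z\rangle$. What the paper's computational proof buys in exchange is that the same explicit integral representation of the coupling is reused immediately afterwards in Lemma \ref{lemmaLowerBound}, where a genuine second-moment integral estimate is unavoidable, whereas your linearity trick has no analogue there. Two cosmetic points: in the degenerate case $\hat r=0$ the reflection map is undefined, but the reflection branch then has probability zero (the acceptance ratio equals one), so your conclusion $Y'=X'$ a.s.\ still stands; and the statements "the ratio equals $1\ge\zeta$" should carry an almost-sure qualifier, which you effectively use anyway.
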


\begin{lemma}\label{lemmaLowerBound}
	For the coupling $(X',Y')$ defined by (\ref{couplingForInaccurate}), if $h \leq 4 m^2$, then we have
	\begin{equation*}
	\underline{\alpha}(\hat{r}) \mathbf{1}_{\{ |\hat{r}| \leq H \}} \leq \mathbb{E}\left[(R' - \hat{r})^2 \mathbf{1}_{\{ R' \in I_{\hat{r}} \}} \, | \, U \right] \,,
	\end{equation*}
	where
	$\underline{\alpha}(\hat{r}) = \frac{1}{2}c_0 \min ( \sqrt{h}, \hat{r}) \sqrt{h}$
	and
	\begin{equation*}
	c_0 := 4 \min \left( \int_0^{1/2} u^2 (1 - e^{u-1/2})\varphi_{0,1}(u) du , (1 - e^{-1}) \int_0^{1/2} u^3 \varphi_{0,1}(u) du \right) \,.
	\end{equation*}
\end{lemma}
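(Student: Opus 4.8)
The plan is to adapt the argument of Section~6 in \cite{EberleMajka2018} to the truncated mirror coupling~(\ref{couplingForInaccurate}): isolate the sub-event on which the reflection branch is triggered, reduce everything to one dimension, and integrate an explicit pointwise lower bound for $(R'-\hat r)^2\mathbf 1_{\{R'\in I_{\hat r}\}}$ against the law of the reflected coordinate. First I would dispose of the trivial case $\hat r>H$: then~(\ref{couplingForInaccurate}) always uses the synchronous branch, so $R'=\hat r$ a.s.\ and the left-hand side vanishes because of the factor $\mathbf 1_{\{|\hat r|\le H\}}$, while the right-hand side is nonnegative. So assume $\hat r\le H$. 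Put $e:=(\hat x-\hat y)/\hat r$ and decompose $Z=We+Z_\perp$ with $W:=\langle Z,e\rangle\sim N(0,1)$ and $Z_\perp\perp e$. On the reflection branch $Y'=\hat y+R_{\hat x,\hat y}\sqrt h Z=\hat y+\sqrt h Z-2\sqrt h We$, whence $X'-Y'=(\hat r+2\sqrt h W)e$ and $R'=|\hat r+2\sqrt h W|$; on the synchronous branch $R'=\hat r$, so $(R'-\hat r)^2=0$; and on the branch $Y'=X'$ we have $R'=0\notin I_{\hat r}$, so it does not contribute to the restricted expectation. Hence it suffices to bound from below the contribution of the reflection branch.

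Next I would pick a favourable range of $W$ according as $\hat r\ge\sqrt h$ or $\hat r<\sqrt h$: for $\hat r\ge\sqrt h$ take $W\in(-\tfrac12,0)$, so that $\hat r+2\sqrt h W\in(\hat r-\sqrt h,\hat r)=I_{\hat r}$; for $\hat r<\sqrt h$ take $W\in(0,\tfrac12)$, so that $\hat r+2\sqrt h W\in(\hat r,\hat r+\sqrt h)\subset(0,\hat r+\sqrt h)=I_{\hat r}$. In both cases $R'\in I_{\hat r}$, $(R'-\hat r)^2=4hW^2$, and $\sqrt h\,|W|<\sqrt h/2\le m$ by the hypothesis $h\le4m^2$, so that the truncation $|\sqrt h Z|<m$ only imposes a further (positive-probability) constraint on $|Z_\perp|$. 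Conditionally on $Z$, the probability over $\zeta$ that the reflection branch is used equals $1$ when $\varphi^m_{\hat y,hI}(X')=0$ and otherwise equals $\bigl(1-\varphi_{\hat y,hI}(X')/\varphi_{\hat x,hI}(X')\bigr)^+$; a direct computation gives $\varphi_{\hat y,hI}(X')/\varphi_{\hat x,hI}(X')=\exp(-\hat r^2/(2h)-\hat r W/\sqrt h)$, which is $\le1$ on the chosen ranges, so the positive part may be dropped. Thus the restricted conditional expectation is at least $\mathbb E\bigl[4hW^2\bigl(1-\exp(-\hat r^2/(2h)-\hat r W/\sqrt h)\bigr)\mathbf 1_{\{W\in J\}}\mathbf 1_{\{|\sqrt h Z|<m\}}\,\big|\,U\bigr]$, where $J$ is the chosen interval.

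Finally I would match the two resulting integrals to $c_0$. For $\hat r\ge\sqrt h$, writing $u=-W\in(0,\tfrac12)$ and $t=\hat r/(2\sqrt h)\ge\tfrac12$, the exponent equals $2t(t-u)$; since $t\mapsto 2t(t-u)$ is increasing on $[\tfrac12,\infty)$ with value $\tfrac12-u$ at $t=\tfrac12$, the reflection probability is $\ge 1-e^{u-1/2}$, and integrating $4hu^2(1-e^{u-1/2})$ against the $N(0,1)$ density on $(0,\tfrac12)$ gives a bound $\ge 4h\int_0^{1/2}u^2(1-e^{u-1/2})\varphi_{0,1}(u)\,du\ge\tfrac12 c_0 h=\underline\alpha(\hat r)$ by the first term in the definition of $c_0$. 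For $\hat r<\sqrt h$, with $s:=\hat r^2/(2h)+\hat r u/\sqrt h\in(0,1)$ and $u=W$, the elementary bound $1-e^{-s}\ge(1-e^{-1})s$ on $[0,1]$ gives reflection probability $\ge(1-e^{-1})\hat r u/\sqrt h$, and integrating $4hu^2(1-e^{-1})\hat r u/\sqrt h$ against $\varphi_{0,1}$ on $(0,\tfrac12)$ gives $\ge 4\sqrt h\,\hat r(1-e^{-1})\int_0^{1/2}u^3\varphi_{0,1}(u)\,du\ge\tfrac12 c_0\sqrt h\,\hat r=\underline\alpha(\hat r)$ by the second term. The two regimes agree at $\hat r=\sqrt h$ and combine into the claimed bound $\tfrac12 c_0\min(\sqrt h,\hat r)\sqrt h$.

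\textbf{Main obstacle.} The delicate part is the bookkeeping around the mixture structure of~(\ref{couplingForInaccurate}): one must carve out a single sub-event on which \emph{simultaneously} the reflection branch is active, $R'$ lands in $I_{\hat r}$, and $(R'-\hat r)^2$ collapses to the tractable quantity $4hW^2$, all while the truncation $|\sqrt h Z|<m$ is preserved — which is precisely what dictates the two-sided shape of the intervals $I_{\hat r}$ in~(\ref{choiceOfIntervals}) and the two-branch form of $c_0$, with the factor $4$ in $c_0$ supplying the slack for $\tfrac12 c_0\le4\cdot(\text{integral})$ in each regime. The only genuine computations beyond this bookkeeping are the Gaussian density ratio and the elementary inequality $2t(t-u)\ge\tfrac12-u$ for $t\ge\tfrac12$, $u\in(0,\tfrac12)$.
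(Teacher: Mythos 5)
Your proposal is correct and follows essentially the same route as the paper's own proof: the same reduction to the reflected one-dimensional projection $W$ with $(R'-\hat r)^2=4hW^2$, the same favourable ranges of $W$ in the two regimes $\hat r\ge\sqrt h$ and $\hat r<\sqrt h$ dictated by $I_{\hat r}$, the same Gaussian density-ratio bound $\exp(-\hat r^2/(2h)-\hat rW/\sqrt h)$ for the reflection probability, and the same two elementary inequalities producing exactly the two integrals in the definition of $c_0$. Your parenthetical about the constraint on $|Z_\perp|$ coming from the full-norm truncation $|\sqrt hZ|<m$ is glossed in the same way the paper glosses it (via its ``without loss of generality'' one-dimensional reduction), so this is not a point of divergence from the published argument.
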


The proofs are based on the calculations from Section 6 in \cite{EberleMajka2018} and can be found in Appendix \ref{appendixContractivity}.

Before we proceed, let us make a remark about the choice of parameters in our coupling (\ref{ourCoupling}).

\begin{remark}
	The choice of $m = \sqrt{h_0}/2$ in (\ref{ourCoupling}) means that we rarely make the non-synchronous step. In principle, we could choose $m$ arbitrarily large, but then from the proof we see that we would also need to redefine $r_2 := r_1 + 2m$, and as $r_2$ increases, the constant $c$ decreases to zero. Moreover, we see that $A \to \infty$ as $r_2 \to \infty$. This shows that increasing $m$ actually gives us worse constants and hence $m$ should be kept as small as possible. This is related to the fact that the only place in the calculations where we gain something from the non-synchronous behaviour of our coupling is in the lower bound for $\underline{\alpha}$, which is the coefficient near $f''$. But this supremum is taken over a small interval and actually when we compute the lower bound for $\underline{\alpha}$ we only integrate a Gaussian density over an interval of length $[0,\sqrt{h} / 2]$. Everywhere else the non-synchronous behaviour is actually harmful to our estimates (due to the convex cost), so it makes sense to take $m$ very small, i.e., $m = \frac{\sqrt{h_0}}{2}$ and $r_2 = r_1 + 2m = r_1 + \sqrt{h_0}$.
\end{remark}

We are now ready to proceed with the proof of Theorem \ref{theoremContractivityForInaccurate}, which we will split into a few steps, depending on the size of the argument $r$. First note that if $r = |x - y| > \mathcal{R}$, then due to (\ref{ina driftLipschitz}) and (\ref{ina contractivity}) we have
\begin{equation*}
\begin{split}
\hat{r} &= \sqrt{r^2 + 2h \langle x - y , \bar{b}(x,U) - \bar{b}(y,U) \rangle + h^2 |\bar{b}(x,U) - \bar{b}(y,U)|^2} \\
&\leq \sqrt{1 - 2hK + h^2 L^2} \\
&\leq r\left(1 - hK + \frac{h^2 L^2}{2}\right) \,,
\end{split}
\end{equation*}
where we used the fact that $\sqrt{1 + x} \leq 1 + \frac{x}{2}$ for all $x \geq - 1$. Note that due to Assumptions \ref{as diss} we have $K \leq L$ and hence the expression under the square root is indeed non-negative.
Therefore if we choose $h$ small enough, we have $\hat{r} \leq r$ for $r > \mathcal{R}$. More precisely, we will assume $h \leq K / L^2$, which implies
\begin{equation*}
\frac{h^2 L^2}{2} = \frac{h L^2}{2} h \leq \frac{h L^2}{2} \frac{K}{L^2} = \frac{hK}{2} \,,
\end{equation*}
hence in fact we have
$- hK + \frac{h^2 L^2}{2} \leq - \frac{hK}{2}$
and we will later use the fact that for all $r > \mathcal{R}$ we have
\begin{equation}\label{inequalityrhatBound}
\hat{r} \leq r \left( 1 - \frac{hK}{2} \right) \,.
\end{equation}
However, even if $r \leq \mathcal{R}$, we can still control the size of $\hat{r}$ due to (\ref{ina driftLipschitz}). Namely, we have
\begin{equation*}
\hat{r} = \sqrt{|x - y + h(\bar{b}(x,U) - \bar{b}(y,U))|^2} \leq (1 + hL)\mathcal{R} \leq (1 + h_0L)\mathcal{R} = r_1
\end{equation*}
for $r \leq \mathcal{R}$. This is the condition motivating the choice of $r_1$, i.e., if $r$ is in $[0,\mathcal{R}]$, then the application of the drift can increase the distance $\hat{r}$ maximally up to $r_1$.

Having the above basic estimates, we can begin the proof of the easiest case, when $r \in [r_1,r_2]$.

\subsection{Estimates for $r \in [r_1,r_2]$.}\label{subsectionMiddleEstimates}

Since $r \geq r_1 > \mathcal{R}$, we have $\hat{r} \leq r \leq r_2$. This means that the interval $(\hat{r},r)$ is contained in $[0, r_2]$, where the function $f$ is concave. Hence for the drift step we have
\begin{equation}\label{middleEstimates1}
f(\hat{r}) - f(r) \leq f'(r)(\hat{r} - r) \,.
\end{equation}
Furthermore, the derivative of $f$ is bounded from below on the interval $[0,r_2]$ by $f'(r_2) = e^{-ar_2}$, which implies
\begin{equation}\label{middleEstimates3}
-\frac{Khr}{2}f'(r) \leq -\frac{Khr}{2}e^{-ar_2} \leq -\frac{Kh}{2}e^{-ar_2} f(r) \,,
\end{equation}
where in the last step we additionally use the fact that $f(r) \leq r$ for $r \in [0,r_2]$. Combining (\ref{inequalityrhatBound}), (\ref{middleEstimates1}) and (\ref{middleEstimates3}) gives
$f(\hat{r}) - f(r) \leq -\frac{Kh}{2}e^{-ar_2} f(r)$
for all $r \in [r_1, r_2]$, which is exactly what we want. Now we have to show that the Gaussian step does not spoil these estimates. We need to consider two cases.

\paragraph{The case of $\hat{r} \leq r_1$.}

Observe that due to our coupling construction, the value of $R'$ is always within the interval $[0, \hat{r} + 2m]$ (it can become zero when we jump to the same point, and if we reflect the jumps then $R'$ can increase maximally up to $\hat{r} + 2m$ due to the truncation of jumps by $m$). Hence the interval with endpoints $\hat{r}$ and $R'$ is always contained in $[0,\hat{r}+2m]$. Hence, if $\hat{r} \leq r_1$, then 
$(\hat{r}, R') \subset [0,r_1+2m] = [0,r_2]$,
where $(\hat{r}, R')$ should be interpreted as $(R', \hat{r})$ if $R' < \hat{r}$. Recalling that the function $f$ is concave on $[0,r_2]$, by the Taylor formula we get
\begin{equation*}
\mathbb{E}\left[ f(R') \, | \, U \right] - f(\hat{r}) = f'(\hat{r})\mathbb{E}\left[ (R' - \hat{r}) \, | \, U \right] + \mathbb{E}\left[ f''(\theta)(R' - \hat{r})^2 \, | \, U \right] \leq f'(\hat{r})\mathbb{E}\left[ (R' - \hat{r}) \, | \, U \right]  
\end{equation*}
for some $\theta \in (\hat{r}, R') \subset [0,r_2]$. However, due to Lemma \ref{lemmaCouplingFirstMoment} we have $\mathbb{E}\left[ (R' - \hat{r}) \, | \, U \right] = 0$ and hence
$\mathbb{E}\left[ f(R') \, | \, U \right] - f(\hat{r}) \leq 0$.

\paragraph{The case of $\hat{r} > r_1$.}

Here the interval $[0, \hat{r} + 2m]$ is no longer contained in $[0,r_2]$ and the function $f$ is convex for arguments larger than $r_2$, so we cannot bound $f''$ by zero as above. This is why for $\hat{r} > r_1$ we use the synchronous coupling and we have
$\mathbb{E}\left[ f(R') \, | \, U \right] - f(\hat{r}) = 0$.

Hence, combining all the estimates from this subsection together, we see that we have
$\mathbb{E}\left[ f(R') \, | \, U \right] - f(r) \leq -\frac{Kh}{2}e^{-ar_2} f(r)$ for all $r \in [r_1, r_2]$.

\subsection{Estimates for $r \in [r_2,\infty)$.}

Here we deal with the Gaussian step exactly as in the previous subsection. If $\hat{r} \leq r_1$, then we can use concavity of $f$ on $[0,r_2]$. Otherwise, for $\hat{r} > r_1$, we use the synchronous coupling. Hence $\mathbb{E}\left[ f(R') \, | \, U \right] - f(\hat{r}) \leq 0$ and we only need to deal with the drift step. Since $r \geq r_2 > \mathcal{R}$, we can bound $\hat{r}$ by using (\ref{inequalityrhatBound}).
However, we do not know whether $r \left( 1 - \frac{Kh}{2} \right)$ is smaller or greater than $r_2$, and since the function $f$ is given via two different formulas for arguments below and above $r_2$, we need to consider two different cases.

\paragraph{The case of $r\left( 1 - \frac{Kh}{2} \right) \geq r_2$.}\label{subsubsectionLargeEstimates1}

Since $f$ is increasing, using (\ref{inequalityrhatBound}) we have
\begin{equation}\label{largeEstimates1}
f(\hat{r}) \leq f\left(r\left( 1 - \frac{Kh}{2} \right)\right) = \frac{1}{a}\left( 1 - e^{-ar_2}\right) + \frac{1}{2r_2}e^{-ar_2} \left( r^2 \left( 1 - \frac{Kh}{2} \right)^2 - r_2^2 \right) \,.
\end{equation}
We want to bound this expression from above by
$f(r) - c'hf(r)$
for some constant $c' > 0$ and we know that, since $r \geq r_2$, we have
\begin{equation}\label{largef}
f(r) = \frac{1}{a}\left( 1 - e^{-ar_2}\right) + \frac{1}{2r_2}e^{-ar_2} \left( r^2 - r_2^2 \right) \,.
\end{equation}
A simple calculation shows that we need to find $c' > 0$ such that
\begin{equation}\label{largeEstimates4}
\frac{1}{2r_2}e^{-ar_2} \left(  -Kh + \frac{K^2h^2}{4} \right) r^2 \leq - c'hf(r)
\end{equation}
holds for all $r \geq r_2$. In order for this to be possible, the left hand side needs to be negative. Note that $-Kh + \frac{K^2h^2}{4} < 0$ when $h < \frac{4}{K}$, which holds due to our choice of $h_0$, cf. (\ref{defh0}). We have
\begin{equation}\label{largeEstimates2}
\begin{split}
\left( Kh - \frac{K^2 h^2}{4} \right) f(r) &= \left( Kh - \frac{K^2 h^2}{4} \right) \frac{1}{2r_2}e^{-ar_2} r^2 \\
&+ \left( Kh - \frac{K^2 h^2}{4} \right) \left( \frac{1}{a}\left( 1 - e^{-ar_2}\right) - \frac{1}{2r_2}e^{-ar_2} r_2^2 \right) \,.
\end{split}
\end{equation}
Now observe that we can find a constant $c_2 > 0$ such that
\begin{equation}\label{largeEstimates3}
\frac{1}{a}\left( 1 - e^{-ar_2}\right) - \frac{1}{2r_2}e^{-ar_2} r_2^2 \leq c_2 \frac{1}{2r_2}e^{-ar_2} r^2
\end{equation}
for all $r \geq r_2$. Namely, we set
\begin{equation*}
c_2 := \frac{\frac{1}{a}\left( 1 - e^{-ar_2}\right) - \frac{1}{2r_2}e^{-ar_2} r_2^2}{\frac{1}{2r_2}e^{-ar_2} r_2^2} \,.
\end{equation*}
We know that $c_2 > 0$ since the function
$g(r) := \frac{1}{a}\left( 1 - e^{-ar}\right) - \frac{1}{2}e^{-ar} r$
is increasing for all $r > 0$ and $g(0) = 0$.
From (\ref{largeEstimates2}) and (\ref{largeEstimates3}) we get
\begin{equation*}
\frac{\left( Kh - \frac{K^2 h^2}{4} \right)}{1 + c_2} f(r) \leq \left( Kh - \frac{K^2 h^2}{4} \right) \frac{1}{2r_2}e^{-ar_2} r^2
\end{equation*}
for all $r \geq r_2$. Combining this with (\ref{largeEstimates4}), we obtain
\begin{equation*}
f(\hat{r}) \leq f(r) - \frac{\left( Kh - \frac{K^2 h^2}{4} \right)}{1 + c_2} f(r) = f(r) - \frac{\frac{1}{2}e^{-ar_2}r_2}{\frac{1}{a}\left( 1 - e^{-ar_2} \right)} \left( Kh - \frac{K^2 h^2}{4} \right) f(r) \,.
\end{equation*}

\paragraph{The case of $r\left( 1 - \frac{Kh}{2} \right) \leq r_2$.}

We again use the fact that $\hat{r} \leq r\left( 1 - \frac{Kh}{2} \right)$ and $f(\hat{r}) \leq f\left(r\left( 1 - \frac{Kh}{2} \right)\right)$. The difficulty in this case comes from the fact that we need to compare the values of $f$ given by two different formulas, i.e., $f(r)$ given by (\ref{largef}) and
$f \left( r \left( 1 - \frac{Kh}{2} \right) \right) = \frac{1}{a} \left( 1 - e^{-a\left( r \left( 1 - \frac{Kh}{2} \right) \right)} \right)$.
We can circumvent this problem by considering the midpoint between $r \left( 1 - \frac{Kh}{2} \right)$ and $r$, namely, the point $r \left( 1 - \frac{Kh}{4} \right)$. We have
\begin{equation}\label{largeEstimates5}
f(\hat{r}) \leq f\left( r \left( 1 - \frac{Kh}{2} \right) \right) = f\left( r \left( 1 - \frac{Kh}{2} \right) \right) - f\left( r \left( 1 - \frac{Kh}{4} \right) \right) + f\left( r \left( 1 - \frac{Kh}{4} \right) \right) \,.
\end{equation}
We have either $r \left( 1 - \frac{Kh}{4} \right) \leq r_2$ or $r \left( 1 - \frac{Kh}{4} \right) > r_2$. In the former case, we use the fact that $f$ is increasing to get
$f\left( r \left( 1 - \frac{Kh}{4} \right) \right) \leq f(r)$
and for the remaining term in (\ref{largeEstimates5}) we proceed similarly to Subsection \ref{subsectionMiddleEstimates}. Namely, we have
\begin{equation*}
\begin{split}
f\left( r \left( 1 - \frac{Kh}{2} \right) \right) - f\left( r \left( 1 - \frac{Kh}{4} \right) \right) &\leq f'\left( r \left( 1 - \frac{Kh}{4} \right) \right) \left( r\left( 1 - \frac{Kh}{2} \right) - r\left( 1 - \frac{Kh}{4} \right) \right) \\
&= - \frac{Kh}{4} f'\left( r \left( 1 - \frac{Kh}{4} \right) \right) r
\end{split}
\end{equation*}
due to concavity of $f$ on $[0,r_2]$ and the fact that $[r \left( 1 - \frac{Kh}{2} \right), r \left( 1 - \frac{Kh}{4} \right)] \subset [0,r_2]$. Now we use the fact that $f'(r) \geq e^{-ar_2}$ and $f(r) \leq r$ for $r \in [0,r_2]$, which gives us 
\begin{equation*}
- \frac{Kh}{4} f'\left( r \left( 1 - \frac{Kh}{4} \right) \right) r \leq - e^{-ar_2} r \frac{Kh}{4} \leq - e^{-ar_2} \frac{Kh}{4} f(r) \,.
\end{equation*}
On the other hand, if $r \left( 1 - \frac{Kh}{4} \right) > r_2$, then we use the fact that $f$ is increasing to get
$f\left( r \left( 1 - \frac{Kh}{2} \right) \right) - f\left( r \left( 1 - \frac{Kh}{4} \right) \right) \leq 0$
and we have
$f(\hat{r}) - f(r) \leq f\left( r \left( 1 - \frac{Kh}{4} \right) \right) - f(r)$.
Since $r \left( 1 - \frac{Kh}{4} \right) > r_2$, we can use our calculations from Subsection \ref{subsubsectionLargeEstimates1}. We just need to replace $\left( 1 - \frac{Kh}{2} \right)$ therein with $\left( 1 - \frac{Kh}{4} \right)$, and, in consequence, we obtain
\begin{equation*}
f\left( r \left( 1 - \frac{Kh}{4} \right) \right) - f(r) \leq - \frac{\frac{1}{2}e^{-ar_2}r_2}{\frac{1}{a}\left( 1 - e^{-ar_2} \right)} \left( \frac{Kh}{2} - \frac{K^2 h^2}{16} \right) f(r) \,.
\end{equation*}
Note that we have $\frac{K}{2} - \frac{K^2 h}{16} > \frac{K}{4}$ since $h < \frac{4}{K}$ by (\ref{defh0}). Hence, combining all the estimates from this subsection together, we see that for all $r \in [r_2, \infty)$ we have
\begin{equation*}
\mathbb{E}\left[ f(R') \, | \, U \right] - f(r) \leq f(\hat{r}) - f(r) \leq - \min \left( e^{-ar_2} \frac{Kh}{4} , \frac{\frac{1}{2}e^{-ar_2}r_2}{\frac{1}{a}\left( 1 - e^{-ar_2} \right)} \frac{Kh}{4} \right) f(r) \,.
\end{equation*}
This is also valid for $r \in [r_1, r_2]$, cf. the constant obtained in Subsection \ref{subsectionMiddleEstimates}.

\subsection{Estimates for $r \in [0,r_1]$.}\label{subsectionSmallEstimates}

For the drift step we have two cases. If $\hat{r} \leq r$, then we just use the fact that $f$ is increasing and we get
$f(\hat{r}) - f(r) \leq 0$.
If $\hat{r} > r$, we will need to use
$f(\hat{r}) - f(r) \leq f'(r)(\hat{r} - r)$,
which holds due to the fact that $r \leq r_1$ implies $\hat{r} \leq r_1$ and $f$ is concave on $[0,r_1]$. We will show later in this section how to bound this term in order to be able to use it in connection with the Gaussian step contribution to obtain the desired bounds.

For the Gaussian step, we again use that $r \in [0,r_1]$ implies $\hat{r} \in [0,r_1]$. Therefore, by our coupling construction, $R' \in [0, r_1 + 2m] = [0, r_2]$ and we can use the fact that $f''$ is concave on $[0,r_2]$.
We apply the Taylor formula (note that the function $f$ is twice differentiable), i.e., we have
\begin{equation*}
\mathbb{E}\left[ f(R') \, | \, U \right] - f(\hat{r}) = f'(\hat{r}) \mathbb{E}\left[ (R' - \hat{r}) \, | \, U \right] + \mathbb{E} \left[ f''(\theta) (R' - \hat{r})^2 \, | \, U \right] \,,
\end{equation*}
where $\theta$ is a point between $\hat{r}$ and $R'$. We bound
\begin{equation*}
\begin{split}
\mathbb{E} \left[ f''(\theta) (R' - \hat{r})^2 \, | \, U \right] &\leq \mathbb{E} \left[ \sup_{\theta \in (\hat{r}, R')} f''(\theta) (R' - \hat{r})^2 \, \Big| \, U \right] \\
&= \mathbb{E} \left[ \sup_{\theta \in (\hat{r}, R')} f''(\theta) (R' - \hat{r})^2 \mathbf{1}_{\{ R' \in I_{\hat{r}}\}} + \sup_{\theta \in (\hat{r}, R')} f''(\theta) (R' - \hat{r})^2 \mathbf{1}_{\{ R' \notin I_{\hat{r}}\}} \, \Big| \, U \right] \\
&\leq \mathbb{E} \left[ \sup_{\theta \in I_{\hat{r}}} f''(\theta) (R' - \hat{r})^2 \mathbf{1}_{\{ R' \in I_{\hat{r}}\}} + \sup_{\theta \in (\hat{r}, R')} f''(\theta) (R' - \hat{r})^2 \mathbf{1}_{\{ R' \notin I_{\hat{r}}\}} \, \Big| \, U \right] \\
&\leq \sup_{\theta \in I_{\hat{r}}} f''(\theta) \mathbb{E} \left[ (R' - \hat{r})^2 \mathbf{1}_{\{ R' \in I_{\hat{r}}\}} \, | \, U \right] \,,
\end{split}
\end{equation*}
where $(\hat{r}, R')$ denotes either $(\hat{r}, R')$ or $(R', \hat{r})$, depending on the relation between $\hat{r}$ and $R'$, $I_{\hat{r}}$ can be any (random) open interval such that $\hat{r}$ belongs to the closure of $I_{\hat{r}}$, in the third line we use the fact that $R' \in I_{\hat{r}}$ implies that $(\hat{r}, R') \subset I_{\hat{r}}$ and in the last step we use the fact that $f'' \leq 0$ on $[0, r_2]$ so we can bound the second term by zero, whereas in the first expression $\sup_{\theta \in I_{\hat{r}}} f''(\theta)$ is pulled out in front of the conditional expectation as it is a measurable function of $U$.

We also use Lemma \ref{lemmaCouplingFirstMoment} to get $\mathbb{E}\left[ (R' - \hat{r}) \, | \, U \right]  = 0$. Hence we have
\begin{equation}\label{smallEstimates1}
\mathbb{E}\left[ f(R') \, | \, U \right] - f(\hat{r}) \leq \frac{1}{2}\underline{\alpha}(\hat{r}) \sup_{I_{\hat{r}}} f'' \,,
\end{equation}
where
$\underline{\alpha}(\hat{r}) \leq \mathbb{E} \left[ (R' - \hat{r})^2 \mathbf{1}_{\{ R' \in I_{\hat{r}}\}} \, | \, U \right]$
and the exact value of $\underline{\alpha}(\hat{r})$ is specified in Lemma \ref{lemmaLowerBound} (note that here $\hat{r} \leq r_1 = H$) for our particular choice of intervals $I_{\hat{r}}$ given by (\ref{choiceOfIntervals}). Recall that for the drift step we get
\begin{equation}\label{smallEstimates2}
f(\hat{r}) - f(r) \leq f'(r)(\hat{r} - r) 
\end{equation}
and that due to (\ref{ina driftLipschitz}) we have
$\hat{r} = |x - y + h(\bar{b}(x,U) - \bar{b}(y,U))| \leq |x - y| + h|\bar{b}(x,U) - \bar{b}(y,U)| \leq r + hLr$.
We would like to replace $f'(r)$ in (\ref{smallEstimates2}) with $f'(\hat{r})$, since in (\ref{smallEstimates1}) we consider supremum of $f''$ on a small neighbourhood of $\hat{r}$ rather than $r$. Recall now that $f'(r) = e^{-ar}$ for $r \leq r_2$ (here we consider $r \leq r_1 \leq r_2$) and hence
$f'(r) = e^{-a(r - \hat{r} + \hat{r})} = e^{-a\hat{r}}e^{-a(r - \hat{r})} = f'(\hat{r}) e^{-a(r - \hat{r})}$.
We also have
$\hat{r} - r \leq hLr$
and hence we can bound
\begin{equation*}
f(\hat{r}) - f(r) \leq f'(r)(\hat{r} - r) \leq f'(\hat{r})e^{ahL r_1} (\hat{r} - r) \leq f'(\hat{r})e^{ahL r_1} hL\hat{r} \,,
\end{equation*}
where the last step holds for $r \leq \hat{r}$. Note, however, that for $r > \hat{r}$ we have $f(\hat{r}) - f(r) \leq 0$ and hence the bound
\begin{equation}\label{smallEstimates3}
f(\hat{r}) - f(r) \leq f'(\hat{r})e^{ahL r_1} hL\hat{r}
\end{equation}
holds for all $r \in [0,r_1]$.
Thus, combining (\ref{smallEstimates1}) and (\ref{smallEstimates3}), we see that we need to show that
\begin{equation*}
f'(\hat{r})e^{ahL r_1} hL\hat{r} + \frac{1}{2} \underline{\alpha}(\hat{r}) \sup_{I_{\hat{r}}} f'' \leq - c' h f(r)
\end{equation*}
for some $c' > 0$ and for all $r \in [0,r_1]$. We will in fact show
\begin{equation}\label{smallEstimatesGoal}
f'(\hat{r})e^{ahL r_1} hL\hat{r} + \frac{1}{2} \underline{\alpha}(\hat{r}) \sup_{I_{\hat{r}}} f'' \leq - c h f(\hat{r}) \,.
\end{equation}
Note that this implies an upper bound by $-chf(r)/2$ as long as $hL \in (0,1/2)$. Indeed, we have
$r = |x - y| \leq \hat{r} + hLr$
and thus 
$f(\hat{r}) \geq f((1 - hL)r) \geq (1 - hL)f(r) \geq f(r)/2$,
since $f$ is increasing and concave on $[0, r_1]$ and $hL \in (0,1/2)$ by assumption (\ref{defh0}).

Recall now that
\begin{equation*}
I_{\hat{r}} = \begin{cases}
(0, \hat{r} + \sqrt{h}) & \text{ if } \hat{r} \leq \sqrt{h} \\
(\hat{r} - \sqrt{h}, \hat{r}) & \text{ if } \hat{r} > \sqrt{h}
\end{cases}
\end{equation*}
and $f''$ is increasing on $[0,r_2]$, since $f'''(r) = a^2 e^{-ar}$ for $r \in [0,r_2]$. Therefore
\begin{equation*}
\sup_{I_{\hat{r}}} f'' = \begin{cases}
-ae^{-a(\hat{r} + \sqrt{h})} & \text{ if } \hat{r} \leq \sqrt{h} \\
-ae^{-a\hat{r}} & \text{ if } \hat{r} > \sqrt{h}
\end{cases} \,.
\end{equation*}
We proceed by considering two separate cases, related to the form of the interval $I_{\hat{r}}$ and, consequently, the form of the bound $\underline{\alpha}(\hat{r})$ from Lemma \ref{lemmaLowerBound}.

\paragraph{The case of $\hat{r} > \sqrt{h}$.}
Looking at (\ref{smallEstimatesGoal}), we see that we need to show that
\begin{equation}\label{smallEstimatesGoal2}
e^{-a\hat{r}}e^{ahL r_1} hL\hat{r} - \frac{1}{2} \underline{\alpha}(\hat{r}) ae^{-a\hat{r}} \leq - ch \frac{1}{a}(1 - e^{-a\hat{r}}) 
\end{equation}
for all $\hat{r} \leq r_1$. Recall that
$\underline{\alpha}(\hat{r}) = c_0 \min (\hat{r}, \sqrt{h})\sqrt{h}$.
We will show that
\begin{equation}\label{smallEstimatesAuxilliary}
e^{-a\hat{r}}e^{ahL r_1} L\hat{r} - \frac{1}{2} c_0 ae^{-a\hat{r}} \leq - c \frac{1}{a}(1 - e^{-a\hat{r}}) \,.
\end{equation}
To this end it is sufficient to show that the left hand side of (\ref{smallEstimatesAuxilliary}) is bounded by $-c/a$ for all $\hat{r} \in [0,r_1]$.
Hence we need
\begin{equation*}
e^{ahLr_1} L \hat{r} - \frac{1}{2}c_0 a \leq -\frac{c}{a} e^{a \hat{r}} \,.
\end{equation*}
Recall that $r_1 = (1 + h_0L)\mathcal{R} \leq 3 \mathcal{R}/2$, since $h_0L < 1/2$ by assumption (\ref{defh0}). Since $h < h_0$ and we have
\begin{equation*}
h_0 \leq \frac{2c_0 \ln{\frac{3}{2}}}{27L^2 \mathcal{R}^2} \leq \frac{c_0 \ln{\frac{3}{2}}}{9L^2 r_1 \mathcal{R}} \leq \frac{\ln{\frac{3}{2}}}{aLr_1} \,,
\end{equation*}
we see that
$e^{ahLr_1} \leq 3/2$.
We would like to stress the fact that this bound (and hence the assumption on $h_0$) is to some extent arbitrary and we can modify it in order to have in the last estimate any number strictly greater than $1$ instead of $3/2$. We need to have
\begin{equation}\label{smallEstimatesAuxilliary2}
\frac{3}{2} L \hat{r} - \frac{1}{2}c_0 a \leq -\frac{c}{a} e^{a \hat{r}} \,.
\end{equation}
First we need to make sure that
$3 L \hat{r} - c_0 a < 0$
holds for all $\hat{r} \leq r_1$, by choosing $a$ in an appropriate way. This is guaranteed by our choice of $a = 6 L r_1 / c_0$ made in (\ref{defa}), i.e., it is chosen (again to some extent arbitrarily, up to a constant) so that
$\frac{3}{2} L r_1 = \frac{1}{4}c_0 a$.
With this choice of $a$ we have
$\frac{3}{2} L \hat{r} - \frac{1}{2}c_0 a \leq - \frac{1}{4} c_0 a$
for all $\hat{r} \leq r_1$ and hence in order to get (\ref{smallEstimatesAuxilliary2}) we need to have
$- \frac{1}{4} c_0 a \leq -\frac{c}{a} e^{a \hat{r}}$
for all $\hat{r} \leq r_1$, which implies that we should choose
\begin{equation*}
c \leq \frac{9 L^2 r_1^2}{c_0} e^{-\frac{6 L r_1^2}{c_0}} \,.
\end{equation*}

\paragraph{The case of $\hat{r} \leq \sqrt{h}$.}

We use the fact that $f(\hat{r}) \leq \hat{r}$ and in fact for small $\hat{r}$ these quantities are close to each other. Coming back to (\ref{smallEstimatesGoal2}), we see that we need to show
\begin{equation}\label{smallEstimates4}
e^{ahLr_1} h L - \frac{1}{2}c_0  \sqrt{h} a e^{-a \sqrt{h}} \leq -ch e^{a\hat{r}} \,.
\end{equation} 
Since $h < h_0$ and by assumption
\begin{equation*}
h_0 \leq \frac{c_0^2 (\ln{2})^2}{144L^2\mathcal{R}^2} \,,
\end{equation*}
using once again the bound $r_1 \leq 3 \mathcal{R}/2$, we see that
\begin{equation*}
\sqrt{h} \leq \frac{c_0\ln{2}}{12L\mathcal{R}} \leq \frac{3c_0\ln{2}}{24Lr_1} = \frac{3}{4a} \ln{2} \leq \frac{1}{a} \ln{2} 
\end{equation*}
and we have
$e^{a \sqrt{h}} \leq 2$.
Hence the left hand side of (\ref{smallEstimates4}) is bounded (recall that we also have $e^{ahLr_1} \leq \frac{3}{2}$) by
$\frac{3}{2} h L - \frac{1}{4}c_0  \sqrt{h} a$,
which we now need to make sure is negative. We can do this e.g. by making sure that
$\frac{3}{2} h L \leq \frac{1}{8}c_0 \sqrt{h} a$,
which is equivalent to
$\sqrt{h} \leq \frac{r_1}{2}$,
but this is ensured by our assumption $h_0 \leq \mathcal{R}^2 / 4$, since $\mathcal{R} \leq r_1$. Hence we see that the left hand side of (\ref{smallEstimates4}) is bounded from above by
$- c_0 a \sqrt{h}/8$.
Observe that we have
$e^{a \hat{r}} \leq e^{a \sqrt{h}} \leq 2$,
which implies
$- \frac{1}{8} c_0 a \sqrt{h} \leq - \frac{1}{16} c_0 a \sqrt{h} e^{a \hat{r}}$.
Hence we need to have
\begin{equation*}
- \frac{1}{16} c_0 a \sqrt{h} e^{a \hat{r}} \leq - c h e^{a \hat{r}} \,,
\end{equation*}
which implies that we should choose 
\begin{equation*}
c \leq \frac{3 L r_1}{8 \sqrt{h_0}} = \frac{c_0 a}{16 \sqrt{h_0}} \,.
\end{equation*}
Then (\ref{smallEstimates4}) is satisfied and the proof is finished.

\section{Weak error analysis}\label{sectionWeak}

\begin{proof}[Proof of Theorem \ref{th weak}]
	The novelty of this proof is in the fact that we are working with an inaccurate drift. This is what we focus on, while keeping some standard estimates left to the reader. We refer to \cite{talay1990second} and \cite{milstein2013stochastic} for details of proofs of weak convergence rates. Moreover, in order to clearly showcase the main ideas, we present the proof only in the one-dimensional setting. An extension to the multidimensional case follows in the same way, although with more cumbersome notation. Let $u(t,x):=\bE[ g(Y_{t}^{0,x}) ]$, where $Y_{t}^{0,x}$ is a solution to the SDE (\ref{eq sde}) at time $t$, started from the initial condition $x \in \mathbb{R}$ at time $0$. Since $b \in C^3_b$ and $g\in C^{\infty}$ with polynomial growth, we can deduce from \cite{krylov98} that $u\in C^{1,3}([0,T],\bR)$ and that it satisfies the Kolmogorov equation 
	\begin{equation}\label{eg pde}
	\begin{cases}
	(Lu)(t,x):=\partial_t u(t,x) - \partial_x u(t,x) b(x) - \frac{1}{2}\partial_x^2u(t,x) =0\,,\quad (t,x)\in[0,\infty)\times \bR,\\
	u(0,x) = g(x)\,. 	
	\end{cases}	
	\end{equation}
	Furthermore, due to our assumption (\ref{eq th weak dissipativity}), Theorem 3.4 in \cite{talay1990second} tells us that for any multi-index $I$ with $|I| \leq 2$ there exist an integer $p_{|I|}$ and positive constants $\gamma_{|I|}$ and $\Gamma_{|I|}$ such that
	\begin{equation} \label{eq ubound}
	|\partial_I u(t,x) | \leq \Gamma_{|I|} (1+|x|^{p_{|I|}})e^{-\gamma_{|I|} t} \,.
	\end{equation}
	Let $T = Mh$ for some $M \geq 1$. We define the continuous extension of \eqref{eq ineuler}, given by
	\[
	\bar X_{t} = X_0 + \int_0^t \bar{b}(\bar X_{\eta(s)},U_{\eta(s)})ds + W_t\,,
	\]
	where $\eta(s)=kh $ for $s\in(kh,(k+1)h]$ and $(U_{kh})_{k=0}^{\infty}$ are defined in an analogous way as $(U_k)_{k=0}^{\infty}$ in (\ref{eq ineuler}).  It is important to note that $\bar X_{kh}$ is independent of $U_{kh}$.
	From the definition of $u$ we have $u(0,\bar X_T)=g(\bar X_T)$. Consequently $\bE[g(\bar X_T)] - \bE[g(Y_T)]= \bE[u(0,\bar X_T)  - u(T,X_0) ]$ due to the Feynman-Kac formula. Then the It\^{o} formula gives 
	\begin{equation*}
	\begin{split}
	\bE&[g(\bar X_T)] - \bE[g(Y_T)]  =
	\sum_{k=1}^M  \bE [u(T - kh,\bar X_{kh})  - u(T- (k-1)h,\bar X_{(k-1)h})]	 \\
	=& \sum_{k=1}^M  \bE \int_{T-(k-1)h}^{T- kh}
	\left(- \partial_t u(T-t,\bar X_t) + \partial_x u(T- t, \bar X_t)\bar{b}(\bar X_{\eta(t)}, U_{\eta(t)}) + \frac{1}{2}\partial_{xx}^2u(T-t,\bar X_t) \right)dt\,.
	\end{split}
	\end{equation*}
	From now on, let us denote $t_k := kh$. Since $u$ satisfies the PDE \eqref{eg pde}, we see that for all $x\in \bR^d$ we have $-\partial_t u(T-t,x) + \frac{1}{2}\partial_{xx}^2u(T-t,x)  = -  \partial_x u(T-t,x) b(x)$. Hence
	\begin{equation*}
	\begin{split}
	& \bE[g(\bar X_T)] - \bE[g(Y_T)]=	 \sum_{k=1}^M  \bE\int_{T-t_{k-1}}^{T- t_k}
	 \partial_x u(T-t, \bar X_t)\left( \bar{b}(\bar X_{\eta(t)}, U_{\eta(t)}) - b(\bar X_t) \right)dt \\
	= &  \sum_{k=1}^M  \bE \int_{T-t_{k-1}}^{T- t_k}
	\left[\left( \partial_x u(T- t, \bar X_t) - \partial_x u(T- t, \bar X_{\eta(t)})\right) \left( \bar{b}(\bar X_{\eta(t)}, U_{\eta(t)}) - b(\bar X_t) \right) \right]dt \\
	& + \sum_{k=1}^M  \bE \int_{T-t_{k-1}}^{T- t_{k}}
	\left[ \partial_x u(T-t, \bar X_{\eta(t)})\left( \bar{b}(\bar X_{\eta(t)}, U_{\eta(t)}) - b(\bar X_t) \right) \right]dt 
	\\
	= &  \sum_{k=1}^M  \bE \int_{T-t_{k-1}}^{T- t_{k}}
	\left[\left( \partial_x u(T- t, \bar X_t) - \partial_x u(T-t, \bar X_{\eta(t)})\right) \left( \bar{b}(\bar X_{\eta(t)}, U_{\eta(t)}) - b(\bar X_{\eta(t)})\right) \right]dt\,\,\,\,\, (:= E_1) \\
	& + \sum_{k=1}^M  \bE \int_{T-t_{k-1}}^{T- t_{k}}
	\left[\left( \partial_x u(T - t, \bar X_t) - \partial_x u(T - t, \bar X_{\eta(t)})\right) \left( b(\bar X_{\eta(t)}) - b(\bar X_t) \right) \right]dt \,\,\,\,\, (:= E_2)\\
	& + \sum_{k=1}^M  \bE \int_{T-t_{k-1}}^{T- t_{k}}
	\left[ \partial_x u(T - t, \bar X_{\eta(t)}) \left( \bar{b}(\bar X_{\eta(t)}, U_{\eta(t)}) - b(\bar X_t) \right) \right]dt\,\,\,\,\, (:= E_3) \,.
	\end{split}
	\end{equation*}
	We begin by considering the last term, denoted by $E_3$. Observe that 
	\[
	\begin{split}
	\bE& 
	\left[ \partial_x u(T - t, \bar X_{\eta(t)}) \left( \bar{b}(\bar X_{\eta(t)}, U_{\eta(t)}) - b(\bar X_t) \right) |  \bar X_{\eta(t)} \right] \\
	&= \bE 
	\left[ \partial_x u(T-t, \bar X_{\eta(t)}) \bE \left[ \left( b(\bar X_{\eta(t)}) - b(\bar X_t)\right) |  \bar X_{\eta(t)} \right] \right]	\\
	&\leq \,
	\Gamma_{1}e^{-\gamma_{1} (T-t) } \bE 
	\left[  (1+|\bar{X}_{\eta(t)}|^{p_1}) \cdot | \bE \left[\left( b(\bar X_{\eta(t)}) - b(\bar X_t)\right) | \bar X_{\eta(t)}\right] | \, \right] \\
	&\leq \Gamma_{1}e^{-\gamma_{1} (T-t) } (2+ 2C_{IEul}^{(2p_1)})^{1/2} \left(\bE| \bE \left[\left( b(\bar X_{\eta(t)}) - b(\bar X_t) \right) | \bar X_{\eta(t)}\right]|^2\right)^{1/2}\,,
	\end{split}
	\]
	where we used $\mathbb{E}|\bar X_{\eta(t)}|^{2p_1} \leq C_{IEul}^{(2p_1)}$ due to Remark \ref{remarkPthMoments}. Application of the It\^{o} formula and the fact that $b\in C^3_b$ leads to 
	\[
	\begin{split}
	& \bE \left[ \left( b(\bar X_t) - b(\bar X_{\eta(t)}) \right) | \bar X_{\eta(t)}\right] =\bE [ \int_{\eta(t)}^t b'(\bar X_s) \bar{b}(\bar X_{\eta(s)},U_{\eta(s)}) + \frac{1}{2} b''(\bar X_s) ds \,| \,\bar X_{\eta(t)}] \\
	\leq & \, \frac{1}{2} C_{b''}(t-\eta(t))  +	 C_{b'} \int_{\eta(t)}^t \bE \left[| \bar{b}(\bar X_{\eta(s)},U_{\eta(s)})| \, | \bar X_{\eta(t)}\right] ds   \,,
	\end{split}
	\]
	where $C_{b'}$ and $C_{b''}$ are bounds on the derivatives of $b$. By Minkowski's inequality and \eqref{ina conditionalSecondMomentBound}
	\[
	\left( \bE\left[|\bar{b}(\bar X_{\eta(s)},U_{\eta(s)})| \,  | \bar X_{\eta(t)}\right] \right)^2\leq \sigma^2 (|\bar{X}_{\eta(s)}|^2 + 1) h^{\alpha} + 2L^2|\bar{X}_{\eta(s)}|^2 + 2|b(0)|^2 \leq C(|\bar{X}_{\eta(s)}|^2 + 1)\,,
	\]		
	where $C:= \max \left( \sigma^2 + 2L^2 , \sigma^2 + 2|b(0)|^2 \right)$, since $h \leq 1$. Consequently 
	\[
	E_3 \leq \bar C\sum_{k=1}^M \int_{T-t_{k-1}}^{T- t_{k}}  e^{-\gamma_{1}(T- t) }  
	(t-\eta(t))dt\,, 
	\]
	with $\bar C := \Gamma_{1} (2+ 2C_{IEul}^{(2p_1)})^{1/2} \left( \frac{1}{2}C_{b''}^2 + 2 C_{b'}^2 (C_{IEul} + 1) \max(\sigma^2 + 2L^2 , \sigma^2 + 2|b(0)|^2) \right)^{1/2}$. We conclude our estimation of $E_3$ by observing that, since $|t - \eta(t)| \leq h$, we have  
	\[
	\bar{C} \sum_{k=1}^M   \int_{T-t_{k-1}}^{T- t_{k}} e^{-\gamma_{1}(T- t) }  
	(t-\eta(t))dt	 \leq \bar{C} h \sum_{k=1}^M   \int_{T-t_{k-1}}^{T- t_{k}} e^{-\gamma_{1} (T-t) }dt = \bar{C} h \int_{0}^T e^{-\gamma_{1} (T-t) }dt \leq \bar{C} \frac{h}{\gamma_{1}} \,.
	\] 
	That completes the estimates of the third term $E_3$. To estimate the second term $E_2$ we calculate 
	\begin{equation*} 
	\begin{split}
	& \sum_{k=1}^M  \bE \int_{T-t_{k-1}}^{T- t_{k}}
	\left[\left( \partial_x u(T - t, \bar X_t) - \partial_x u(T - t, \bar X_{\eta(t)})\right) \left( b(\bar X_{\eta(t)}) - b(\bar X_t) \right) \right]dt \\
	\leq \, & L \sum_{k=1}^M  \bE \int_{T-t_{k-1}}^{T- t_{k}}
	\left[\int_{0}^{1} | \partial_{xx} u(T-t, \alpha \bar X_t  + (1-\alpha)\bar X_{\eta(t)})|d\alpha \, |\bar X_{\eta(t)} - \bar X_t|^2  \right]dt \\
	\leq \, &  \sum_{k=1}^M \int_{T-t_{k-1}}^{T- t_{k}} \Gamma_{2} e^{-\gamma_2 (T-t) } 2^{p_2 - 1} 
	\bE \left[ (1+ | \bar X_t|^{p_2}  + |\bar X_{\eta(t)}|^{p_2})   |\bar X_{\eta(t)} - \bar X_t|^2  \right]dt\, \\
	\leq \, &  \sqrt{3} 2^{p_2 - 1} \sum_{k=1}^M \int_{T-t_{k-1}}^{T- t_{k}} \Gamma_{2} e^{-\gamma_2 (T-t) }
	\left( \bE[ (1+ | \bar X_t|^{2p_2}  + |\bar X_{\eta(t)}|^{2p_2}) ]\right)^{1/2}  \left( \bE[ |\bar X_{\eta(t)} - \bar X_t|^4  ]\right)^{1/2}dt \,.
	\end{split}	
	\end{equation*}
	The first expectation in the integrand can be bounded using Remark \ref{remarkPthMoments}, while for the second we write
	\begin{equation*}
	\begin{split}
	\bE | \bar X_t - \bar X_{\eta(t)}|^4 = &  \bE\left|  \int_{\eta{(t)}}^t \bar{b}(\bar{X}_{\eta(s)},U_{\eta(s)}) ds + W(t) - W(\eta(t)) \right|^4 \\
	\leq & 8 \left( (t- \eta{(t)})^3  \int_{\eta{(t)}}^t \bE|\bar{b}(\bar{X}_{\eta(s)},U_{\eta(s)})|^4 ds + 3(t- \eta{(t)})^2  \right) \,,
	\end{split}
	\end{equation*}
	where we use $(a+b)^4 \leq 8a^4 + 8b^4$, $\mathbb{E}|W(t) - W(\eta(t))|^4 = 3|t - \eta(t)|^2$ and $\left( \int_a^b |g| \right)^p \leq (b-a)^{p-1} \int_a^b |g|^p$ due to the H\"{o}lder inequality.

	Since we assume that for all $x \in \mathbb{R}$ we have $\mathbb{E}|\bar{b}(x,U)|^4 \leq C_{\bar{b}}^4(1+|x|^4)$,  
	we can now bound $|t - \eta(t)| \leq h$ and complete the estimate for $E_2$ in a similar way as for $E_3$.
	
	It now remains to deal with $E_1$. We have 
	\begin{equation*}
	\begin{split}
	& \sum_{k=1}^M  \bE \int_{T- t_{k-1}}^{T- t_k}
	\left[\left( \partial_x u(T-t, \bar X_t) - \partial_x u(T-t, \bar X_{\eta(t)}) \right) \left( \bar{b}(\bar X_{\eta(t)}, U_{\eta(t)}) - b(\bar X_{\eta(t)})\right) \right]dt \\
	\leq &  \sum_{k=1}^M   \int_{T- t_{k-1}}^{T- t_{k}} \left( \mathbb{E} |\partial_x u(T-t,\bar{X}_t) - \partial_x u(T-t,\bar{X}_{\eta(t)})|^2 \right)^{1/2} \left( \mathbb{E}| \bar{b}(\bar{X}_{\eta(t)}, U_{\eta(t)}) - b(\bar{X}_{\eta(t)}) |^2 \right)^{1/2} dt \\
	\leq &  \sigma (1 + C_{IEul})^{1/2} \sum_{k=1}^M   \int_{T- t_{k-1}}^{T - t_{k}} \left( \mathbb{E} |\partial_x u(T-t,\bar{X}_t) - \partial_x u(T-t,\bar{X}_{\eta(t)})|^2 \right)^{1/2} dt \,,
	\end{split}
	\end{equation*}
	where we used \eqref{ina varianceBound} and $h \leq 1$. For the remaining expectation,	
	since $u\in C^{1,3}$,  we have 
	\[
	\begin{split}
	& \partial_x u(T-t, \bar X_t) - \partial_x u(T-t, \bar X_{\eta(t)}) \\
	= & \int_{\eta(t)}^t \left( -\partial^2_{tx} u(T-s,\bar X_s) + \partial^2_{xx} u(T-s,\bar X_s) \bar{b}(\bar X_{\eta(s)},U_{\eta(s)}) + \frac{1}{2}\partial_{xxx}^3 u(T-s,\bar X_s) \right) ds \,.
	\end{split}
	\]
	Since $u\in C^{1,3}([0,T],\bR)$, we can infer from (\ref{eg pde}) that it satisfies 
	\begin{equation} \label{eq new pde}
	\begin{cases}
	\partial_{tx} u(t,x) - \partial^2_{xx} u(t,x) b(x) - \frac{1}{2}\partial_{xxx}^3u(t,x) = \partial_x u(t,x)b'(x)\,, \quad (t,x)\in[0,\infty)\times \bR,\\
	\partial_x u(0,x) = g'(x)\,. 	
	\end{cases}	
	\end{equation}
	Hence we have
	\[
	\begin{split}
	& \partial_x u(T-t, \bar X_t) - \partial_x u(T-t, \bar X_{\eta(t)}) \\
	= &  \int_{\eta(t)}^t \left( \partial^2_{xx} u(T-s,\bar X_s) \left( \bar{b}(\bar X_{\eta(s)},U_{\eta(s)}) - b(\bar X_{s}) \right) - \partial_x u(T-s,\bar X_s) b'(\bar X_s) \right) ds \\
	\leq & \int_{\eta(t)}^t \left( \Gamma_2 e^{-\gamma_2 (T-s)}(1 + |\bar{X}_s|^{p_2}) | \bar{b}(\bar X_{\eta(s)},U_{\eta(s)}) - b(\bar X_{s}) | + C_{b'} \Gamma_1 e^{-\gamma_1 (T-s)} (1 + |\bar{X}_s|^{p_1}) \right) ds \,.
	\end{split}
	\]
	Now we can use the H\"{o}lder inequality as above with $p = 2$, take the expectation, use the Cauchy-Schwarz inequality for $\mathbb{E}$ to separate the terms involving $|\bar{X}_s|$, $|\bar{b}(\bar{X}_{\eta(s)}, U_{\eta(s)})|$ and $|b(\bar{X}_s)|$, use the linear growth conditions (\ref{boundOnbOfx}) and (\ref{ina conditionalSecondMomentBound}) for $b$ and $\bar{b}$, respectively, and then use the uniform bounds for $\mathbb{E}|\bar{X}_s|^p$ for all $p \geq 1$ as in the previous step. All these efforts combined give us a constant $\hat{C} > 0$ such that
	\begin{equation*}
	\partial_x u(T-t, \bar X_t) - \partial_x u(T-t, \bar X_{\eta(t)}) \leq \hat{C} (t - \eta(t)) e^{-(\gamma_1 \wedge \gamma_2)(T-t)} \,.
	\end{equation*}
	Proceeding as before, we finish the estimate of $E_1$ and hence the entire proof.
\end{proof}

\section{Further possible extensions}

We would like to highlight the robustness of the general approach presented in this paper by briefly discussing several possible extensions of our results to more general settings. For the lack of space, we do not cover these additional cases in detail. However, they are all based on the idea that whenever for a Markov chain $(X_k)_{k=0}^{\infty}$ we have a one-step contraction of the form (\ref{contractivity}) in some (pseudo-)distance function $f$ (see \cite{EberleMajka2018, QinHobert2019} and the references therein for further examples of such contractions), we can then use the properties of $f$ to obtain a perturbation inequality such as (\ref{perturbation}) and employ it to study error bounds for sampling algorithms based on $(X_k)_{k=0}^{\infty}$.

\subsection{Non-constant discretisation parameter}

Our analysis of ULA in Section \ref{sectionULA} is based on the one-step analysis of Euler schemes in Section \ref{sectionCoupling}. However, we do not need to use the same discretisation parameter in each step and the proofs of Theorems \ref{theoremULA} and \ref{mainTheoremULA} still work in the same way if the discretisation parameters vary between steps. More precisely, let us consider a decreasing sequence $(h_k)_{k=0}^{\infty}$ such that $h_0 \leq \min \left( \frac{K}{4L^2}, \frac{4}{K}, \frac{1}{2L}, \frac{2 c_0 \ln{\frac{3}{2}}}{27L^2 \mathcal{R}^2}, \frac{\mathcal{R}^2}{4}, \frac{c_0^2 (\ln{2})^2}{144L^2 \mathcal{R}^2} \right)$, so that all $h_k$ for $k \geq 0$ are within the range required in Theorem \ref{theoremULA}. We consider a Markov chain given by
\begin{equation*}
X_{k+1} = X_k + h_k b(X_k) + \sqrt{h_k} Z_{k+1} \,.
\end{equation*}
Then in the proof of Theorem \ref{theoremULA} for each $k \geq 0$ we have
\begin{equation*}
\mathbb{E}f(|G_{k+1} - Y_{(k+1)h_k}|) \leq (1-ch_k) \mathbb{E} f(|G_k - Y_{kh_{k-1}}|) + C_{ult} h_k^{3/2}
\end{equation*}
and hence, instead of (\ref{ULAFinalInequality}), we obtain
\begin{equation*}
\mathbb{E}f(|G_{k+1} - Y_{(k+1)h_k}|) \leq \left( \prod_{l=0}^{k}(1-ch_l)  \right) \mathbb{E}f(|G_0 - Y_0|) + C_{ult} \sum_{j=0}^{k} \left( \prod_{l=1}^{j}(1-ch_{k-(l-1)}) h_{k-j}^{3/2} \right) \,,
\end{equation*}
with the convention $\prod_{l=1}^0 (\ldots) = 1$. This leads to
\begin{equation*}
\begin{split}
W_2(\cL(X_k),\pi) &\leq \left( A\left( \prod_{l=0}^{k-1}(1-ch_l)  \right) \mathbb{E}f(|X_0 - V_0|) \right)^{1/2} \\
&+ \left( AC_{ult} \sum_{j=0}^{k-1} \left( \prod_{l=1}^{j}(1-ch_{k-(l-1)}) h_{k-j}^{3/2} \right) \right)^{1/2} \,,
\end{split}
\end{equation*}
where $V_0 \sim \pi$, and bounds for $W_1$ can be obtained in a similar way.

\subsection{Bounds in the total variation distance}\label{subsectionTotalVariation}

In order to obtain error bounds for ULA in the total variation distance, we can consider a (pseudo-)distance function with a discontinuity at zero. Namely, if $f : [0,\infty) \to [0,\infty)$ is of the form
\begin{equation}\label{fWithDiscontinuity}
f(r) := a \mathbf{1}_{(0,\infty)}(r) + f_1(r) \,,
\end{equation} 
where $a > 0$ and $f_1 : [0,\infty) \to [0,\infty)$ is a continuous, non-decreasing function, then for all probability measures $\mu$ and $\nu$ on $\mathbb{R}^d$ we have $\| \mu - \nu \|_{TV} \leq 2 a^{-1} W_f(\mu, \nu)$ (recall that $\| \mu - \nu \|_{TV} = 2 \inf_{\pi \in \Pi(\mu, \nu)} \int \mathbf{1}_{(0, \infty)}(|x-y|) \pi(dx \, dy)$ and note that $W_{\mathbf{1}_{(0,\infty)}}(\mu, \nu) = \frac{1}{2}\| \mu - \nu \|_{TV}$). Hence, if for a Markov chain $(X_k)_{k=0}^{\infty}$ we obtain a one-step contraction such as (\ref{contractivity}) in a (pseudo-)distance based on the function $f$ given by (\ref{fWithDiscontinuity}), then we can use the same reasoning as in Section \ref{sectionULA} to control $\| \cL(X_k) - \pi \|_{TV}$. Obtaining such contractions is indeed possible under our Assumptions \ref{as diss}, see e.g.\ Theorem 2.10 in \cite{EberleMajka2018}. Note that the distance function and all the constants there are explicit (although rather complicated). We leave the details to the interested reader. 

\subsection{Bounds without requiring contractivity of the drift even at infinity}

Another possible extension of our results would be to remove the assumption (\ref{driftDissipativityAtInf}) of contractivity at infinity of the drift and replace it with the weaker condition (\ref{driftDissipativeGrowth}), i.e., to assume that there exist constants $M_1$, $M_2 > 0$ such that for all $x \in \mathbb{R}^d$ we have
\begin{equation}\label{driftLyapunovCondition}
\langle b(x), x \rangle \leq M_2 - M_1 |x|^2 \,.
\end{equation}
Note that in Lemma \ref{lemmaContractivityAtInfinityImpliesLyapunov} we showed that (\ref{driftDissipativityAtInf}) implies (\ref{driftLyapunovCondition}) so the latter condition is indeed more general. Note that an extension in this direction in the context of Wasserstein contractions for diffusion processes has been recently obtained in \cite{EberleGuillinZimmer2019}, which generalized the results from \cite{Eberle2016}. An analogous result directly at the level of Euler schemes has been obtained in \cite{EberleMajka2018}, see Theorem 6.1 and Example 6.2 therein. Namely, the authors of \cite{EberleMajka2018} showed that under Assumptions \ref{as diss} with (\ref{driftDissipativityAtInf}) replaced by (\ref{driftLyapunovCondition}), one can construct a pseudo-distance $\rho_a$ of the form
\begin{equation}\label{rhoDistance}
\rho_a(x,y) := \left( a + C(V(x) + V(y)) \right)\mathbf{1}_{(0,\infty)}(|x-y|) + f_1(|x-y|) 
\end{equation} 
and a coupling $(X',Y')$ of the Euler scheme (\ref{oneStepEuler}) with a transition kernel $p$, such that $(X',Y')$ is contractive in $W_{\rho_a}$, i.e., using the notation from Corollary \ref{mainCorollary},
\begin{equation}\label{contractionWithoutContractivity}
W_{\rho_a}(\mu p , \nu p) \leq (1-ch)W_{\rho_a}(\mu,\nu)
\end{equation}
with some constant $c > 0$, for sufficiently small $h > 0$ and for all probability measures $\mu$ and $\nu$ on $\mathbb{R}^d$. In (\ref{rhoDistance}), $a$ and $C$ are positive constants, $V$ is a Lyapunov function and $f_1$ is a continuous non-decreasing function. Since $\rho_a$, similarly as $f$ in Subsection \ref{subsectionTotalVariation}, dominates $\mathbf{1}_{(0,\infty)}$ (up to a multiplicative constant) the contraction in (\ref{contractionWithoutContractivity}) allows us to control $\| \cL(X_k) - \pi \|_{TV}$, where $(X_k)_{k=0}^{\infty}$ is a scheme with the transition kernel $p$. Moreover, $\rho_a$ also dominates the truncated Wasserstein distance $W_1^{\ast}$ given by 
\begin{equation*}
W_1^{\ast}(\mu, \nu) := \inf_{\pi \in \Pi(\mu,\nu)}  \int_{\bR^d\times \bR^d} \left( |x-y| \wedge 1 \right) \, \pi(dx \, dy) \,.
\end{equation*}
Hence (\ref{contractionWithoutContractivity}) allows us to control also $W_1^{\ast}(\cL(X_k), \pi)$. By tracing the proof of our Theorem \ref{theoremSGcomparison}, we can then easily extend all these results to chains with inaccurate drifts. Again, for the lack of space, we leave the details to the reader.

\section{Appendix: Coupling for Euler schemes}

In this section we provide a proof that the random vector $(X',Y')$ presented in (\ref{ourCoupling}) is indeed a coupling of two copies of $X'$. Let us start with the case of $H = \infty$, i.e., without the truncation based on the distance of the processes before the jump.

\begin{theorem}\label{couplingTheorem}
	The joint distribution of the $2d$-dimensional random vector $(X',Y')$ defined by (\ref{ourCoupling}) is a coupling of $N(\hat{x},hI)$ and $N(\hat{y},hI)$.
	\begin{proof}
		We need to show that $Y' \sim N(\hat{y},hI)$. To this end, we will show that for any continuous bounded function $g$ we have
		\begin{equation*}
		\mathbb{E}g(Y') = \mathbb{E}g(\hat{y} + \sqrt{h}Z) \,,
		\end{equation*}
		where $Z \sim N(0,I)$. Straight from the definition of $Y'$ we get
		\begin{equation}\label{truncatedMirrorComputation}
		\mathbb{E}g(Y') = I_1 + I_2 + I_3 \,,
		\end{equation}
		where
		\begin{equation*}
		\begin{split}
		I_1 &= \int_{\mathbb{R}^d} g(\hat{x} + \sqrt{h}z) \frac{\varphi_{\hat{y}, hI}^{m}(\hat{x} + \sqrt{h}z) \wedge \varphi_{\hat{x}, hI}^{m}(\hat{x} + \sqrt{h}z) }{\varphi_{\hat{x}, hI}^{m}(\hat{x} + \sqrt{h}z)} \mathbf{1}_{\{ |\sqrt{h}z| < m \}} \varphi_{0,I}(z) dz \,, \\
		I_2 &= \int_{\mathbb{R}^d} g(\hat{y} + R_{\hat{x},\hat{y}}\sqrt{h}z) \left( 1 - \frac{\varphi_{\hat{y}, hI}^{m}(\hat{x} + \sqrt{h}z) \wedge \varphi_{\hat{x}, hI}^{m}(\hat{x} + \sqrt{h}z) }{\varphi_{\hat{x}, hI}^{m}(\hat{x} + \sqrt{h}z)} \right) \mathbf{1}_{\{ |\sqrt{h}z| < m \}}  \varphi_{0,I}(z) dz \\
		\text{and } \, \, \, I_3 &= \int_{\mathbb{R}^d} g(\hat{y} + \sqrt{h}z)  \mathbf{1}_{\{ |\sqrt{h}z| \geq m \}}  \varphi_{0,I}(z) dz \,.
		\end{split}
		\end{equation*}
		By substituting $u = \hat{x} + \sqrt{h}z$ and using the fact that $\frac{1}{\sqrt{h}} \varphi_{0,I}(\frac{u-\hat{x}}{\sqrt{h}}) = \varphi_{\hat{x},hI}(u)$, we see that
		\begin{equation*}
		I_1 = \int_{\mathbb{R}^d} g(u) \left( \varphi^{m}_{\hat{y},hI}(u) \wedge \varphi^{m}_{\hat{x},hI}(u) \right) du \,.
		\end{equation*}
		In a similar way, we can substitute $u = \hat{y} + \sqrt{h}z$ to see that
		\begin{equation*}
		I_3 = \int_{\mathbb{R}^d} g(u) \mathbf{1}_{\{ |u-\hat{y}| \geq m \}} \varphi_{\hat{y},hI}(u) du \,.
		\end{equation*}
		In order to deal with $I_2$, we substitute $u = \hat{y} + R_{\hat{x},\hat{y}} \sqrt{h}z$ and notice that then $\sqrt{h} z = R_{\hat{x},\hat{y}}(u-\hat{y})$ since $R_{\hat{x},\hat{y}}$ is an involution. Moreover, since $R_{\hat{x},\hat{y}}$ is an isometry and $|\det R_{\hat{x}, \hat{y}}| = 1$, we obtain 
		\begin{equation*}
		I_2 = \int_{\mathbb{R}^d} g(u) \left( 1 - \frac{\varphi^m_{\hat{y},hI}(\hat{x} + R_{\hat{x},\hat{y}}(u-\hat{y})) \wedge \varphi^m_{\hat{x},hI}(\hat{x} + R_{\hat{x},\hat{y}}(u-\hat{y}))}{\varphi^m_{\hat{x},hI}(\hat{x} + R_{\hat{x},\hat{y}}(u-\hat{y}))} \right) \mathbf{1}_{\{ |u-\hat{y}| < m \}} \varphi_{\hat{y},hI}(u) du \,.
		\end{equation*}
		Here we also used $\frac{1}{\sqrt{h}} \varphi_{0,I}\left( \frac{R_{\hat{x},\hat{y}}(u-\hat{y})}{\sqrt{h}}\right) = \varphi_{\hat{y},hI}(u)$, which again follows from $R_{\hat{x},\hat{y}}$ being an isometry. If we now note that $R_{\hat{x},\hat{y}}(\hat{y} - \hat{x}) = \hat{x} - \hat{y}$ and hence
		\begin{equation*}
		\varphi^m_{\hat{y},hI} (\hat{x} + R_{\hat{x},\hat{y}}(u-\hat{y})) = \varphi^m_{\hat{y}-\hat{x},hI} ( R_{\hat{x},\hat{y}}(u-\hat{y})) = \varphi^m_{\hat{x},hI}(u) \,,
		\end{equation*}
		we see that
		\begin{equation*}
		I_2 = \int_{\mathbb{R}^d} g(u) \varphi^m_{\hat{y},hI}(u) du - \int_{\mathbb{R}^d} g(u) \left( \varphi^m_{\hat{x},hI}(u) \wedge \varphi^m_{\hat{y},hI}(u) \right) du \,,
		\end{equation*}
		which implies that $I_1 + I_2 + I_3 = \int_{\mathbb{R}^d} g(u) \varphi_{\hat{y},hI}(u) du$.
	\end{proof}
\end{theorem}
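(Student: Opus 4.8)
The plan is to check the two marginal constraints defining a coupling separately. Since $X' = \hat{x} + \sqrt{h}Z$ with $Z\sim N(0,I)$, the identity $X'\sim N(\hat{x},hI)$ is immediate, so the whole argument concerns $Y'$. To show $Y'\sim N(\hat{y},hI)$ I would fix an arbitrary bounded continuous $g:\mathbb{R}^d\to\mathbb{R}$ and aim to prove $\mathbb{E}g(Y') = \int_{\mathbb{R}^d} g(u)\varphi_{\hat{y},hI}(u)\,du$. (When $H$ is finite rather than $\infty$, note that $\hat{r}=|\hat{x}-\hat{y}|$ is deterministic: on $\{\hat{r}>H\}$ one has $Y'=\hat{y}+\sqrt{h}Z$ identically and the claim is trivial, so it suffices to treat $\hat{r}\le H$, i.e. the $H=\infty$ case below.)

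First I would split $\mathbb{E}g(Y')$ into three integrals $I_1,I_2,I_3$ according to the three branches in (\ref{ourCoupling}): $Y'=X'$ on the acceptance event with $|\sqrt{h}Z|<m$, $Y'=\hat{y}+R_{\hat{x},\hat{y}}\sqrt{h}Z$ on the rejection event with $|\sqrt{h}Z|<m$, and $Y'=\hat{y}+\sqrt{h}Z$ on $\{|\sqrt{h}Z|\ge m\}$. In each piece I would integrate out the independent uniform variable $\zeta$ first, which replaces the indicator of the acceptance event by the multiplicative weight $\big(\varphi^m_{\hat{y},hI}\wedge\varphi^m_{\hat{x},hI}\big)/\varphi^m_{\hat{x},hI}$ evaluated at $X'$ (and by its complement $1-(\cdots)$ for $I_2$), and then change variables in the remaining Gaussian integral. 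For $I_1$, substituting $u=\hat{x}+\sqrt{h}z$ and using $h^{-d/2}\varphi_{0,I}\big((u-\hat{x})/\sqrt{h}\big)=\varphi_{\hat{x},hI}(u)$ yields $I_1=\int g(u)\,\big(\varphi^m_{\hat{y},hI}(u)\wedge\varphi^m_{\hat{x},hI}(u)\big)\,du$; for $I_3$, substituting $u=\hat{y}+\sqrt{h}z$ yields $I_3=\int g(u)\mathbf{1}_{\{|u-\hat{y}|\ge m\}}\varphi_{\hat{y},hI}(u)\,du$.

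The substantive step, and the one I expect to be the main obstacle, is $I_2$. There I would substitute $u=\hat{y}+R_{\hat{x},\hat{y}}\sqrt{h}z$; since $R:=R_{\hat{x},\hat{y}}$ is an orthogonal involution with unit Jacobian this is measure preserving, with inverse $\sqrt{h}z=R(u-\hat{y})$ and $h^{-d/2}\varphi_{0,I}\big(R(u-\hat{y})/\sqrt{h}\big)=\varphi_{\hat{y},hI}(u)$. The key identity to pin down is that $R$ negates the $(\hat{x}-\hat{y})$-direction and fixes the orthogonal hyperplane, so $R(\hat{y}-\hat{x})=\hat{x}-\hat{y}$, whence $\hat{x}+R(u-\hat{y})-\hat{y}=R(u-\hat{x})$ and $\hat{x}+R(u-\hat{y})-\hat{x}=R(u-\hat{y})$; taking Euclidean norms and using that $R$ is an isometry gives $\varphi^m_{\hat{y},hI}\big(\hat{x}+R(u-\hat{y})\big)=\varphi^m_{\hat{x},hI}(u)$ and $\varphi^m_{\hat{x},hI}\big(\hat{x}+R(u-\hat{y})\big)=\varphi^m_{\hat{y},hI}(u)$. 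Feeding these into the weight and using $\varphi^m_{\hat{y},hI}(u)=\varphi_{\hat{y},hI}(u)$ on $\{|u-\hat{y}|<m\}$ produces $I_2=\int g(u)\varphi^m_{\hat{y},hI}(u)\,du-\int g(u)\,\big(\varphi^m_{\hat{x},hI}(u)\wedge\varphi^m_{\hat{y},hI}(u)\big)\,du$.

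Finally I would add the three pieces: the $\wedge$-terms in $I_1$ and $I_2$ cancel, leaving $I_1+I_2+I_3=\int g\,\varphi^m_{\hat{y},hI}+\int g\,\mathbf{1}_{\{|\cdot-\hat{y}|\ge m\}}\varphi_{\hat{y},hI}=\int g\,\varphi_{\hat{y},hI}$, because $\varphi^m_{\hat{y},hI}=\mathbf{1}_{\{|\cdot-\hat{y}|<m\}}\varphi_{\hat{y},hI}$. Since $g$ was arbitrary this gives $Y'\sim N(\hat{y},hI)$, which together with $X'\sim N(\hat{x},hI)$ shows $(X',Y')$ is the asserted coupling. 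The only genuinely delicate bookkeeping is in $I_2$ — getting the reflection substitution, its Jacobian, and the resulting swap $\varphi^m_{\hat{x},hI}\leftrightarrow\varphi^m_{\hat{y},hI}$ exactly right; everything else is a routine change of variables.
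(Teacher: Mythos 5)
Your proposal is correct and follows essentially the same route as the paper's proof: integrate out $\zeta$, split into the three branch integrals $I_1,I_2,I_3$, change variables (with the reflection substitution and the swap $\varphi^m_{\hat{x},hI}\leftrightarrow\varphi^m_{\hat{y},hI}$ via $\hat{x}+R_{\hat{x},\hat{y}}(u-\hat{y})-\hat{y}=R_{\hat{x},\hat{y}}(u-\hat{x})$), and let the minimum terms cancel. Your explicit handling of the $\hat{r}>H$ branch and the dimensionally correct Jacobian factor $h^{-d/2}$ are fine refinements, but the argument is the same.
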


Now we can consider the general case of $(X',Y')$ given by (\ref{ourCoupling}) with $H \in (0,\infty)$. We can easily check that this is indeed a coupling of $N(\hat{x},hI)$ and $N(\hat{y},hI)$ by applying Theorem \ref{couplingTheorem} and observing that for any continuous bounded function $g$ we have
\begin{equation*}
\mathbb{E}g(Y') = \mathbf{1}_{\{ \hat{r} > H \}} \mathbb{E}g(\hat{y} + \sqrt{h} Z) + \mathbf{1}_{\{ \hat{r} \leq H \}} (I_1 + I_2 + I_3) \,,
\end{equation*}
where $I_1$, $I_2$ and $I_3$ denote the expressions that appear on the right hand side of (\ref{truncatedMirrorComputation}). Since we know from the proof of Theorem \ref{couplingTheorem} that
$I_1 + I_2 + I_3 = \int_{\mathbb{R}^d} g(u) \varphi_{\hat{y},hI}(u) du$, we get our assertion.

\section{Appendix: Proofs of lemmas from Section \ref{sectionMain}}

\subsection{Proof of Lemma \ref{lemmaContractivityAtInfinityImpliesLyapunov}} 
 
 	\begin{proof}
 	If $|x| > \max\left(\mathcal{R}, \frac{2|b(0)|}{K}\right)$ then
 	\begin{equation*}
 	\langle b(x), x \rangle = \langle b(x) - b(0) , x \rangle + \langle b(0), x \rangle \leq -K|x|^2 + |b(0)| |x| \leq - K|x|^2 + \frac{K}{2|b(0)|}|b(0)||x|^2 = -\frac{K}{2}|x|^2 \,.
 	\end{equation*}
 	On the other hand, if $|x| \leq \max\left(\mathcal{R}, \frac{2|b(0)|}{K}\right)$, then
 	\begin{equation*}
 	\langle b(x), x \rangle \leq |b(x) - b(0)||x| + |b(0)||x| \leq L\left(\max\left(\mathcal{R}, \frac{2|b(0)|}{K}\right)\right)^2 + |b(0)|\max\left(\mathcal{R}, \frac{2|b(0)|}{K}\right) \,.
 	\end{equation*}
 \end{proof}

\subsection{Proof of Lemma \ref{lemmaSDEMomentUniformBound}}

	\begin{proof}
	By the It\^{o} formula combined with (\ref{driftDissipativeGrowth}), for $\tau_n := \inf \{ t > 0 : |Y_t| > n \}$ we have
	\begin{equation*}
	\begin{split}
	\mathbb{E}|Y_{t \wedge \tau_n}|^2 &= \mathbb{E}|Y_0|^2 + 2 \mathbb{E} \int_0^{t \wedge \tau_n} \langle b(Y_s) , Y_s \rangle ds + \mathbb{E}(t \wedge \tau_n)d \\
	&\leq \mathbb{E}|Y_0|^2 + t(2M_2+d) - 2M_1 \int_0^{t} \mathbb{E}|Y_{s \wedge \tau_n}|^2 ds \,.
	\end{split}
	\end{equation*}
	By the Gronwall inequality and the Fatou lemma, we get
	$\mathbb{E}|Y_t|^2 \leq \left( \mathbb{E}|Y_0|^2 + t(2M_2+d) \right)e^{-2M_1 t}$.
	Note that the function $t \mapsto t e^{-2M_1 t}$ is bounded by $\frac{1}{2M_1}$ for all $t \geq 0$, which implies the desired bound.
\end{proof}

\subsection{Proof of Lemma \ref{lemmaEulerMomentUniformEstimate}}

	\begin{proof}
	Straight from the definition (\ref{ULAprototype}) of $(X_k)_{k=0}^{\infty}$ we have
	\begin{equation*}
	\mathbb{E}|X_{k+1}|^2 = \mathbb{E}|X_k|^2 + \mathbb{E}|hb(X_k) + \sqrt{h}Z_{k+1}|^2 + 2 \mathbb{E} \langle X_k , hb(X_k) + \sqrt{h} Z_{k+1} \rangle \,.
	\end{equation*}
	Now note that (\ref{driftLipschitz}) implies that for all $x \in \mathbb{R}^d$ we have 
	\begin{equation}\label{boundOnbOfx}
	|b(x)|^2 \leq 2L^2 |x|^2 + 2 |b(0)|^2 \,.
	\end{equation}
	This, together with the fact that $\mathbb{E}|Z_{k+1}|^2 = d$ and (\ref{driftDissipativeGrowth}), implies
	\begin{equation*}
	\mathbb{E}|X_{k+1}|^2 \leq \mathbb{E}|X_k|^2 \left( 1 + 4h^2 L^2 - 2 M_1 h \right) + \left( 4h^2 |b(0)|^2 + 2hd + 2hM_2 \right) \,.
	\end{equation*}
	Hence we have
	\begin{equation*}
	\mathbb{E}|X_{k+1}|^2 \leq \mathbb{E}|X_0|^2 \left( 1 + 4h^2 L^2 - 2 M_1 h \right)^{k+1} \\
	+ 2h\left( 2h |b(0)|^2 + d + M_2 \right) \sum_{j=0}^{k} \left( 1 + 4h^2 L^2 - 2 M_1 h \right)^{j} \,.
	\end{equation*}
	Note that if $|1 + 4h^2 L^2 - 2 M_1 h| < 1$, which is equivalent to $h < \frac{M_1}{2L^2} = \frac{K}{4L^2}$, then we can bound the finite sum on the right hand side by the infinite sum, and the expression next to $\mathbb{E}|X_0|^2$ by $1$, for any $k \geq 0$. This completes the proof.
\end{proof}

\subsection{Proof of Lemma \ref{lemmaDifferenceEulerSDE}}

	\begin{proof}
	We have
	\begin{equation*}
	\begin{split}
	\mathbb{E} &\left| \int_{kh}^{(k+1)h} b(Y_s) ds  - hb(Y_{kh}) \right|^2 = \mathbb{E} \left| \int_{kh}^{(k+1)h} \left( b(Y_s)   - b(Y_{kh}) \right) ds \right|^2 \leq h \mathbb{E} \int_{kh}^{(k+1)h} \left| b(Y_s)   - b(Y_{kh}) \right|^2 ds \\
	&\leq L^2 h \mathbb{E} \int_{kh}^{(k+1)h} \left| Y_s   - Y_{kh} \right|^2 ds \leq 2L^2 h \mathbb{E} \int_{kh}^{(k+1)h} \left( \left| \int_{kh}^{s} b(Y_r) dr \right|^2 + |W_{s} - W_{kh}|^2 \right) ds \\
	&\leq 2L^2 h \mathbb{E} \int_{kh}^{(k+1)h} \left( (s-kh) \int_{kh}^{s} |b(Y_r)|^2 dr + |W_{s} - W_{kh}|^2 \right) ds \\
	&\leq 2L^2 h \left( \left( \sup_{r \leq (k+1)h} \mathbb{E}|b(Y_r)|^2 \right) \int_{kh}^{(k+1)h} (s-kh)^2 ds + d \int_{kh}^{(k+1)h} (s-kh) ds \right) \\
	&= 2L^2 h \left( \left( \sup_{r \leq (k+1)h} \mathbb{E}|b(Y_r)|^2 \right) \frac{h^3}{3} + d \frac{h^2}{2} \right) = L^2 h^3 \left( \left( \sup_{r \leq (k+1)h} \mathbb{E}|b(Y_r)|^2 \right) \frac{2h}{3} + d \right) \,.
	\end{split}
	\end{equation*}
	Now we combine (\ref{boundOnbOfx}) with Lemma \ref{lemmaSDEMomentUniformBound} and we obtain the desired bound.
\end{proof}

\subsection{Proof of Lemma \ref{lemmaEulerMomentUniformEstimateInacurate}}

	\begin{proof}
	We argue similarly as in the proof of Lemma \ref{lemmaEulerMomentUniformEstimate}. From the definition (\ref{eq ineuler}) 
	of $(\bar{X}_k)_{k=0}^{\infty}$ we have
	\begin{equation*}
	\mathbb{E}|\bar{X}_{k+1}|^2 = \mathbb{E}|\bar{X}_k|^2 + \mathbb{E}|h\bar{b}(\bar{X}_k,U_k) + \sqrt{h}Z_{k+1}|^2 + 2 \mathbb{E} \langle \bar{X}_k , h\bar{b}(\bar{X}_k,U_k) + \sqrt{h} Z_{k+1} \rangle \,.
	\end{equation*}
	Note that
	$\mathbb{E} \bE [\langle \bar{X}_k , h\bar{b}(\bar{X}_k,U_k) + \sqrt{h} Z_{k+1} \rangle | \bar{X}_k] = \mathbb{E} \langle \bar{X}_k , hb(\bar{X}_k) \rangle$.
	Furthermore, by (\ref{ina conditionalSecondMomentBound}) we have 
	\begin{equation*}
	\mathbb{E} \left[ |\bar{b}(\bar{X}_k, U_k) |^2 | \bar{X}_k \right] \leq \sigma^2 (|\bar{X}_k|^2 + 1)h^{\alpha} + 2L^2|\bar{X}_k|^2 + 2|b(0)|^2 \,.
	\end{equation*}
	Hence, using $\mathbb{E}|Z_{k+1}|^2 = d$ and (\ref{driftDissipativeGrowth}), we get
	\begin{equation*}
	\mathbb{E}|\bar{X}_{k+1}|^2 \leq \mathbb{E}|\bar{X}_k|^2 \left( 1 + 2 \sigma^2 h^{2+\alpha} + 4 h^2 L^2 - 2 M_1 h \right) + \left( 2\sigma^2 h^{2+\alpha} + 4 h^2 |b(0)|^2 + 2hd + 2hM_2 \right) \,.
	\end{equation*}
	Thus we have 
	\begin{equation*}
	\begin{split}
	\mathbb{E}|\bar{X}_{k+1}|^2 \leq & \mathbb{E}|\bar{X}_0|^2 \left(  1 + 2 \sigma^2 h^2 + 4 h^2 L^2 - 2 M_1 h  \right)^{k+1}  \\
	&+ \left( 2\sigma^2 h^2 + 4 h^2 |b(0)|^2 + 2hd + 2hM_2 \right) \sum_{j=0}^{k} \left(  1 + 2 \sigma^2 h^2 + 4 h^2 L^2 - 2 M_1 h  \right)^{j} \,,
	\end{split}
	\end{equation*}
	where we used $h^{2+\alpha} \leq h^2$ for $h \leq 1$. Note that our upper bound on $h$ guarantees that $| 1 + 2 \sigma^2 h^2 + 4 h^2 L^2 - 2 M_1 h| < 1$ and hence we can bound the finite sum on the right hand side by the infinite sum. Moreover, we bound the expression next to $\mathbb{E}|X_0|^2$ by $1$, which finishes the proof.
\end{proof}

\section{Appendix: Proofs of lemmas from Section \ref{sectionProofContractivity}}\label{appendixContractivity}

The proofs in this section are based on calculations from Section 6 in \cite{EberleMajka2018}.

\subsection{Proof of Lemma \ref{lemmaCouplingFirstMoment}}

	\begin{proof}
	It is sufficient to consider the case of $\hat{r} \leq H$ since for $\hat{r} > H$ we apply the synchronous coupling. Observe that $R' = |X' - Y'|$ can take values $0$ (when the coupled processes jump to the same point), $\hat{r}$ (when they move synchronously) or
	\begin{equation*}
	\left| \hat{x} - \hat{y} + 2 \frac{\hat{x} - \hat{y} (\hat{x} - \hat{y})^T}{| \hat{x} - \hat{y}|^2} \sqrt{h} Z \right| \,,
	\end{equation*}
	where $Z \sim N(0,I)$, when they are reflected. Hence we see that in the reflection case $X' - Y'$ is a sum of two parallel vectors and we have
	$R' = \hat{r} + 2 \sqrt{h} \langle \frac{\hat{x} - \hat{y}}{|\hat{x} - \hat{y}|} , Z \rangle$.
	However, $\langle \frac{\hat{x} - \hat{y}}{|\hat{x} - \hat{y}|} , Z \rangle$ can be interpreted as a one-dimensional projection of a Gaussian random vector and hence without loss of generality we can assume that
	$R' - \hat{r} = 2 \sqrt{h} W$,
	where $W \sim N(0,1)$ is a one-dimensional Gaussian random variable.
	
	From the definition of our coupling, we see that if $\hat{r} \leq H$, then we have
	\begin{equation*}
	\begin{split}
	\mathbb{E}\left[ R' \, | \, U \right] - \hat{r} &= 2 \int_{-\infty}^{0} |t| \left( 1 - \frac{\varphi_{\hat{r},h}^m(t)}{\varphi_{0,h}^m(t)} \right) \mathbf{1}_{\{ |t| \leq m \}} \varphi_{0,h}(t) dt - 2 \int_0^{\hat{r}/2} |t| \left( 1 - \frac{\varphi_{\hat{r},h}^m(t)}{\varphi_{0,h}^m(t)} \right) \mathbf{1}_{\{ |t| \leq m\}} \varphi_{0,h}(t) dt \\
	&+ \int_{-\infty}^{\hat{r}/2} (-\hat{r}) \frac{\varphi_{\hat{r},h}^m(t)}{\varphi_{0,h}^m(t)} \mathbf{1}_{\{ |t| \leq m \}}  \varphi_{0,h}(t) dt + \int_{\hat{r}/2}^{\infty} (-\hat{r}) \mathbf{1}_{\{ |t| \leq m \}}  \varphi_{0,h}(t) dt \,,
	\end{split}
	\end{equation*}
	where the first two terms correspond to the reflection and the next two terms correspond to the situation in which the coupled processes jump to the same point. By making the substitution $u = t - \hat{r}$ in the second and the fourth integrals we get
	\begin{equation*}
	\begin{split}
	\mathbb{E}\left[ R' \, | \, U \right] - \hat{r} &= \int_{-\infty}^{0} (-2t) \varphi_{0,h}^m(t) dt + \int_{-\infty}^{-\hat{r}} (2u + 2 \hat{r}) \varphi_{0,h}^m(u) du +\int_0^{\hat{r}/2} (-2t)\varphi_{\hat{r},h}^m(t) dt \\
	&+ \int_{-\hat{r}}^{-\hat{r}/2} (2u + 2\hat{r})\varphi_{\hat{r},h}^m(u) du +2 \int_{\hat{r}/2}^{\infty} (-\hat{r}) \varphi_{0,h}^m(t) dt \\
	&= \int_{-\infty}^{\hat{r}/2} (-2t)\varphi_{\hat{r},h}^m(t) dt + \int_{-\infty}^{-\hat{r}/2}  2\hat{r} \varphi_{\hat{r},h}^m(u) du +\int_{\hat{r}/2}^{\infty} (-2\hat{r}) \varphi_{0,h}^m(t) dt \\
	&+ \int_{-\infty}^{-\hat{r}/2} 2u \varphi_{0,h}^m(u) du = \int_{-\hat{r}/2}^{\hat{r}/2} (-2t) \varphi_{0,h}^m(t) dt = 0 \,, 
	\end{split}
	\end{equation*}
	which finishes the proof.
\end{proof}

\begin{remark}\label{remarkFirstMomentReflection}
	Note that if instead of the coupling specified in (\ref{couplingForInaccurate}), we would just use the coupling in which the processes are always reflected, we would get
	\begin{equation*}
	\mathbb{E}\left[ R' \, | \, U \right] - \hat{r} = 2 \int_{-\infty}^0 |t| \varphi_{0,h}(t) dt - 2 \int_0^{\hat{r}/2} |t| \varphi_{0,h}(t) dt + 2 \int_{\hat{r}}^{\infty} |t| \varphi_{0,h}(t) dt \,,
	\end{equation*}
	which clearly is positive for all $\hat{r}$, and hence the assertion of Lemma \ref{lemmaCouplingFirstMoment} fails.
\end{remark}

\subsection{Proof of Lemma \ref{lemmaLowerBound}}

\begin{proof}
	First observe that just like in Lemma \ref{lemmaCouplingFirstMoment}, without loss of generality we can assume that $R' - \hat{r} = 2 \sqrt{h} W$, where $W \sim N(0,1)$. We have
	\begin{equation*}
	\mathbb{E}\left[(R' - \hat{r})^2 \mathbf{1}_{\{ R' \in I_{\hat{r}} \}} \, | \, U \right] \geq \int_{-\infty}^{\hat{r}/2} (-2t)^2 \left( 1 - \frac{\varphi^m_{\hat{r},h}(t)}{\varphi^m_{0,h}(t)} \right) \varphi_{0,h}(t) \mathbf{1}_{\{ |t| \leq m \}}  \mathbf{1}_{\{ \hat{r} - 2t \in I_{\hat{r}} \}}  \mathbf{1}_{\{ |\hat{r}| \leq H \}} dt \,.
	\end{equation*}
	We can therefore skip the condition $|\hat{r}| \leq H$ from now on in our calculations and just add it again at the very end. Let us deal first with the case of $\hat{r} \geq \sqrt{h}$. Then $I_{\hat{r}} = (\hat{r} - \sqrt{h} , \hat{r})$ and we see that $\hat{r} - 2t \in (\hat{r} - \sqrt{h} , \hat{r})$ if and only if $t \in (0, \sqrt{h} / 2)$. Hence the integral above is equal to
	\begin{equation}\label{lowerBoundIntegral}
	4 \int_0^{\sqrt{h}/2 \wedge m} t^2 \left( 1 - \frac{\varphi^m_{\hat{r},h}(t)}{\varphi^m_{0,h}(t)} \right) \varphi_{0,h}(t) dt \,.
	\end{equation}
	Now observe that
	\begin{equation*}
	\frac{\varphi^m_{\hat{r},h}(t)}{\varphi^m_{0,h}(t)} = e^{-\frac{|t - \hat{r}|^2}{2h}} \mathbf{1}_{\{ |t - \hat{r}| \leq m \}} e^{\frac{|t|^2}{2h}} \leq e^{-\frac{|t - \hat{r}|^2}{2h}} e^{\frac{|t|^2}{2h}} = e^{-\frac{1}{2h}(t^2 - 2t\hat{r} + \hat{r}^2 - t^2)} = e^{-\frac{1}{h}(\frac{\hat{r}^2}{2} - t\hat{r})} \,.
	\end{equation*}
	Combining this with $h \leq 4 m^2$, we see that the integral in (\ref{lowerBoundIntegral}) is bounded from below by
	\begin{equation*}
	4 \int_0^{\sqrt{h}/2} t^2 \left( 1 - e^{-\frac{1}{h}(\frac{\hat{r}^2}{2} - t\hat{r})} \right) \varphi_{0,h}(t) dt \,.
	\end{equation*}
	Now we make the substitution $u = \frac{1}{\sqrt{h}}t$ which gives
	\begin{equation}\label{lowerBoundIntegral2}
	4 \int_0^{1/2} \sqrt{h}h u^2 \left( 1 - e^{-\frac{1}{h}(\frac{\hat{r}^2}{2} - \sqrt{h} u \hat{r})} \right) \frac{1}{\sqrt{2 \pi h}} e^{-\frac{h u^2}{2h}} du = 4h \int_0^{1/2}  u^2 \left( 1 - e^{\frac{\hat{r}}{\sqrt{h}}(u - \frac{\hat{r}}{2 \sqrt{h}})} \right) \varphi_{0,1}(u) du \,.
	\end{equation}
	Now observe that the function $s \mapsto s(u - s/2)$ is decreasing for $s \geq u$. Moreover, we have $\hat{r}/\sqrt{h} \geq 1 \geq u$ and thus we can bound the integral in (\ref{lowerBoundIntegral2}) from below by
	\begin{equation*}
	4h \int_0^{1/2}  u^2 \left( 1 - e^{u -\frac{1}{2}} \right) \varphi_{0,1}(u) du \,,
	\end{equation*}
	which finishes the proof for the case of $\hat{r} \geq \sqrt{h}$. 
	
	Assume now that $\hat{r} \leq \sqrt{h}$. Then $I_{\hat{r}} = (0, \hat{r} + \sqrt{h})$ and $\hat{r} - 2t \in I_{\hat{r}}$ if and only if $t \in (- \frac{\sqrt{h}}{2}, \frac{\hat{r}}{2})$. Hence we have
	\begin{equation*}
	\mathbb{E}\left[(R' - \hat{r})^2 \mathbf{1}_{\{ R' \in I_{\hat{r}} \}} \, | \, U \right] \geq \int_{-\sqrt{h} /2}^{0} (-2t)^2 \left( 1 - \frac{\varphi^m_{\hat{r},h}(t)}{\varphi^m_{0,h}(t)} \right) \varphi_{0,h}(t) \mathbf{1}_{\{ |\hat{r}| \leq H \}} dt \,.
	\end{equation*}
    Again for convenience we now skip the condition $\hat{r} \leq H$. After making the substitution $u = \frac{1}{\sqrt{h}}t$ and using the lower bound from the calculations from the previous case, we see that the integral above is bounded from below by
	\begin{equation*}
	4h \int_{-1/2}^{0} u^2 \left( 1 - e^{\frac{\hat{r}}{\sqrt{h}}(u - \frac{\hat{r}}{2 \sqrt{h}})} \right) \frac{1}{\sqrt{2 \pi}} e^{-\frac{u^2}{2}} du \,.
	\end{equation*}
	Since $u \in (-1/2,0)$, we easily see that
	$\frac{\hat{r}}{\sqrt{h}}(u - \frac{\hat{r}}{2 \sqrt{h}}) \leq - \frac{\hat{r}^2}{2h} \leq 0$
	and since $\hat{r} \leq \sqrt{h}$, we also have $u - \hat{r}/(2 \sqrt{h}) \geq - 1$, which implies
	$\frac{\hat{r}}{\sqrt{h}}(u - \frac{\hat{r}}{2 \sqrt{h}}) \geq - \frac{\hat{r}}{\sqrt{h}} \geq - 1$.
	We consider the function $g(s) := e^s - 1 -(1 - e^{-1})s$.
	We see that $g(-1) = g(0) = 0$ and there exists $s_0 \in (-1,0)$ such that $g'(s) < 0$ for $s \in (-1,s_0)$ and $g'(s) > 0$ for $s \in (s_0,0)$. This implies that
	$e^s - 1 \leq (1 - e^{-1})s$
	for all $s \in [-1,0]$ hence our integral is bounded from below by
	\begin{equation*}
	\begin{split}
	4h &\int_{-1/2}^{0} u^2 \left( - \frac{\hat{r}}{\sqrt{h}}(u - \frac{\hat{r}}{2 \sqrt{h}}) \right) (1 - e^{-1}) \varphi_{0,1}(u) du\\ &= \frac{4h\hat{r}}{\sqrt{h}} (1 - e^{-1}) \int_{-1/2}^{0} \left( - u^3 + \frac{\hat{r}}{2 \sqrt{h}}u^2 \right)  \varphi_{0,1}(u) du \geq 4 \sqrt{h} \hat{r}  (1 - e^{-1}) \int_{0}^{1/2} u^3 \varphi_{0,1}(u) du \,.
	\end{split}
	\end{equation*}
\end{proof}

\begin{remark}
	Note that in the proof we only use the reflection behaviour and we disregard the possibility of jumping to the same point. This is due to the fact that the probability of jumping to the same point decays exponentially fast with $h$ going to zero and hence it would not contribute to our estimates in a significant way. Hence, for Lemma \ref{lemmaLowerBound} to work, it would be sufficient to take the reflection coupling. However, then our calculations for the first moment in Lemma \ref{lemmaCouplingFirstMoment} would fail (cf. Remark \ref{remarkFirstMomentReflection}). On the other hand, for the synchronous coupling Lemma \ref{lemmaCouplingFirstMoment} holds while Lemma \ref{lemmaLowerBound} clearly fails. Hence the (truncated) mirror coupling given by (\ref{ourCoupling}) is the only one for which both these Lemmas work.
\end{remark}

\section*{Acknowledgement}
We would like to thank the associate editor and the anonymous referee for numerous useful suggestions. MBM and AM are supported by the EPSRC grant EP/P003818/1. The majority of this work was completed while MBM and AM were affiliated to King's College London.

\bibliographystyle{alpha}

\bibliography{Particles,biblio} 
\end{document}